\definecolor{black}{rgb}{0,0,0}
\definecolor{red}{rgb}{1,0,0}
\definecolor{blue}{rgb}{0,0,1}
\theoremstyle{plain}
\newtheorem{thm}{\protect\theoremname}
  \theoremstyle{plain}
  \newtheorem{lem}[thm]{\protect\lemmaname}
\newcommand{\norm}[1]{\left\|#1\right\|}
\newtheorem{theorem}{Theorem}
\newtheorem{definition}{Definition}
\title{Space-time Non-local multi-continua upscaling for parabolic equations with moving channelized media}
\author{Jiuhua Hu\thanks{Department of Mathematics, Texas A\&M University, College Station, TX 77843, USA.  (\texttt{E-mail: jiuhuahu@tamu.edu})},
~Wing Tat Leung\thanks{Department of Mathematics, University of California, Irvine, USA.(\texttt{E-mail: wtleung@uci.edu)})}, 
~Eric Chung\thanks{Department of Mathematics, The Chinese University of Hong Kong, Shatin, Hong Kong.  (\texttt{E-mail: tschung@math.cuhk.edu.hk})},
~Yalchin Efendiev\thanks{Department of Mathematics and Institute for Scientific Computation (ISC), Texas A\&M University, College Station, TX 77843, USA.  (\texttt{E-mail: efendiev@math.tamu.edu})}
~ and ~Sai-Mang Pun\thanks{Department of Mathematics, Texas A\&M University, College Station, TX 77843, USA.  (\texttt{E-mail: smpun@math.tamu.edu})}}
  \providecommand{\lemmaname}{Lemma}
\providecommand{\theoremname}{Theorem}
\begin{document}
\maketitle
\begin{abstract}
In this paper, we consider a parabolic  problem with time-dependent heterogeneous coefficients. 
Many applied problems have coupled space and time heterogeneities. Their homogenization
or upscaling requires cell problems that are formulated in space-time representative volumes
for problems with scale separation. In problems without scale separation, local problems include
multiple macroscopic variables and oversampled local problems, where these macroscopic parameters
are computed. These approaches, called Non-local multi-continua,  are proposed 
for problems with complex spatial heterogeneities in a number of previous papers. 
In this paper, we extend this approach for space-time heterogeneities, by identifying 
macroscopic parameters in space-time regions.
Our proposed method space-time Non-local multi-continua (space-time NLMC)  is an efficient 
numerical solver to deal with time-dependent heterogeneous coefficients. It provides a flexible 
and systematic way to construct multiscale basis functions to approximate the solution.
These multiscale basis functions are constructed by solving a local energy minimization 
problems in the oversampled space-time regions such that these multiscale basis functions 
decay exponentially outside the oversampled domain. Unlike the classical time-stepping methods 
combined with full-discretization technique, our space-time NLMC efficiently constructs the 
multiscale basis functions in a space-time domain and can provide a computational 
savings compared to space-only approaches as we discuss in the paper.  
We present two numerical experiments, which show that the proposed approach can provide 
a good accuracy.
\end{abstract}

\section{Introduction}
A broad range of scientific and engineering problems, for example, composite materials, porous media, turbulent transport in high Reynolds number flows, involve highly varying and heterogeneous multiscale features. A direct numerical treatment of solving these problems is challenging since a fine mesh discretization is needed to capture the multiscale features and this will result in an expensive 
computational cost. There have been many existing multiscale model reduction techniques in the literature to deal with multiscale problems. These multiscale approaches include homogenization approaches, multiscale finite element methods (MsFEMs) 
\cite{MR1455261}, heterogeneous multiscale methods (HMMs) \cite{MR1979846}, variational multiscale methods 
\cite{MR1660141, li2017error}, flux norm approach \cite{MR2721592}, generalized multiscale finite element methods (GMsFEMs) 
\cite{AdaptiveGMsFEM2016, egh12} and localized orthogonal decomposition (LOD) 
\cite{engwer2019efficient,Peterseim2014}. 
 
Homogenization is a well-known upscaling method.  It constructs  homogenized equations whose coefficients depend only on the macroscopic variable. 
  The solutions to the homogenized equations can be solved using coarse mesh and  serves as an approximation to the exact solution in the homogenization limit.  On the other hand,
the main idea of MsFEM and similar methods, like GMsFEM, is to construct multiscale basis functions which capture the small scale information within each coarse grid. The small scale information of the coarse grids is then brought to the large scales. GMsFEM is designed to construct more basis functions for each coarse region.   It has been successfully applied in simulating multiscale problems in channelized permeability. This is mainly because the local problems can 
correctly identify the necessary channels without any geometry interpretation.
Constraint Energy Minimizing Generalized Multiscale Finite Element Method  (CEM-GMsFEM) shares some ideas of GMsFEM. It constructs multiscale basis functions by solving a minimization problems on oversampling domains. It can be shown that with an appropriate choice of oversampling layer,  the convergence of the method is independent of the contrast from the heterogeneities and the error linearly decreases with respect to coarse mesh size. These approaches have achieved great success in the efficient and accurate simulation of heterogeneous problems.

Parabolic initial-boundary value problems arise in many practical applications. In many of these
problems, the heterogeneities have a dynamic nature. For example, channel features (which
play an important role in identifying macroscopic variables) can change in time.
The classical numerical treatment includes full-discretization of space and time.
The standard discretization methods in time and space are based on time-stepping 
methods combined with some spatial discretization technique. It provides a good accuracy when 
solving many parabolic problems. See \cite{thomee1984galerkin} for more details.
However, due to this disparity of scales, the classical numerical treatment becomes 
prohibitively expensive and even intractable for many multiscale applications.

Some multiscale methods have been coupled with the full-discretization techniques 
to reduce the dimension. See
\cite{multiscaleFV_parabolic_2009, multiscale_parabolic_2007, HM_parabolic_hom_2007,homo_parabolic_2007,CEM_parabolic2019}.
For example, the work 
\cite{CEM_parabolic2019} first utilizes CEM-GMsFEM to construct spatial multiscale basis functions and then full-discretization technique
is used. Although these methods have been successfully applied to many problems, they suffer from the separation of the time and space discretizations and  can only be applied to problems with time-independent multiscale coefficients. A more efficient technique is needed to simulate problems with time-dependent multiscale coefficients.

Next, we discuss an advantage of using space-time methods in contrast to space-only approaches
for parabolic problems with time-dependent heterogeneities. When using spatial basis functions,
one needs to generate multiscale basis functions for each ``fine-grid'' time instant within
a coarse-grid time interval. Thus, the number of 
multiscale basis functions, that capture fine-grid dynamics, is very large. 
While using space-time approaches, one can reduce the 
coarse-grid degrees of freedom to a fewer basis functions as the dynamics of heterogeneities
are embedded into multiscale basis functions. As an example is a moving channel or channels
(characterized as high contrast inclusions connecting boundaries of coarse-grid block), 
which typically
requires many spatial basis functions to capture each fine-grid move of the channel, while
it needs a few multiscale basis functions if we use space-time approach.

In the paper, we will develop and analyze a novel multiscale method for parabolic problems with time-dependent multiscale coefficients.
Our approach is based on Non-local multi-continua (NLMC) upscaling method and space-time finite element method. 
We assume that one knows each separate channel within each space-time coarse block and follow a general procedure in \cite{chung2018non} to construct a multiscale basis functions.
NLMC identifies space-time multi-continua parameters and defines a piece-wise constant functions as  local auxiliary functions. Next, multiscale basis functions are sought in the oversampled region subject to a constraint that  the minimizer is orthogonal to the auxiliary space. These multiscale functions are shown to decay exponentially outside the corresponding local oversampling regions. This exponential decay property plays a vital role in the convergence of the proposed method and justifies the use of the local multiscale basis functions.
In this paper, we construct local space-time ansatz spaces to approximate the global space-time ansatz spaces.

The remainder of this paper is organized as follows. In Section \ref{sec:prob_setting_SP}, we introduce the parabolic model problem, standard space-time weak formulation and functional spaces that will be used in this work. We develop local and global NLMC upscaling method in Section \ref{sec:BasisConstruction_SP}. Convergence analysis of our proposed method is studied in Section \ref{sec:convergence_SP}. We present numerical experiments in Section \ref{sec:numerical_SP} to demonstrate the performance of our  proposed method.
Concluding remarks are drawn in Section \ref{sec:conclusion_SP}.
\section{Problem Setting}\label{sec:prob_setting_SP}
In this section, we present some preliminaries of the model problems and introduce the necessary notations. 
Our aim is to develop an efficient numerical upscaling method for parabolic problems with time-dependent heterogeneous coefficients. 
Let $\Omega \subset \mathbb{R}^d$ ($d \in \{ 2, 3\}$) be a bounded domain with a sufficiently smooth boundary $\partial\Omega$. Let $T>0$ be a given positive terminal time. 
We seek a function $u = u(t,x)$ such that it solves the following initial boundary value problem: 
\begin{equation}
\label{PDEModel}
\left\{
\begin{aligned}
\partial_{t}u(t,x)-\nabla\cdot (\kappa(t,x)\nabla u(t,x))&=f(t,x), & (t,x)\in(0,T)\times\Omega, \\
u(0,x)&=0, &x\in \Omega ,\\
u(t,x)&=0, &(t,x)\in  [0,T]\times\partial\Omega,
\end{aligned}
\right .
\end{equation}
where $\kappa = \kappa(t,x)$ is a high-contrast time-dependent permeability field and $f \in L^2(0,T; L^2(\Omega))$ is a source function. We assume that there exist two positive constants $\kappa_0$ and $\kappa_1$ such that $0< \kappa_0 \leq \kappa(t,x) \leq \kappa_1$ for any $(t,x) \in \Omega_T := [0,T] \times \Omega$.

In this work, we will mainly focus on the case when $\kappa$ is a so-called channelized-moving medium. 
In particular, we assume that $\kappa$ is a piecewise constant function such that 
$$ \kappa (x,t) = \left \{ 
\begin{array}{cl}
\kappa_m & \text{if} ~ (t,x) \in D_m, \\
\kappa_i & \text{if} ~ (t,x) \in D_{c,i},
\end{array} \right . $$
where $\kappa_m$ and $\kappa_i$ are two positive constants between $\kappa_0$ and $\kappa_1$ such that the ratio $\kappa_i / \kappa_m$ is very large. Here, the space-time domain $\Omega_T$ is divided into two non-overlapping sets of regions in $\mathbb{R}^{d+1}$ with
\[ 
\Omega_T=D_m\bigcup_{i=1}^{\mathcal{I}_c} D_{c,i}. 
\]
The set $D_m$ is called the matrix region of the coefficient $\kappa$; $ D_{c,i}$ is called the $i$-th channel of the coefficient $\kappa$ and $\mathcal{I}_c$ is the total number of channels in the coefficient $\kappa$. 
In practice, the space-time {\it volume} of the matrix $D_m$ is much larger than that of the channelized region $D_c := \bigcup_{i=1}^{\mathcal{I}_c} D_{c,i}$. 
\subsection{Space-time variational formulation and space-time discretization}
Let $\alpha=(\alpha_1,\alpha_2,\cdots,\alpha_d)$ be a multi-index with non-negative integers $\alpha_i$ for $i=1,2,\cdots, d$. We use $|\alpha|$ to denote the sum of its elements, that is, $|\alpha|=\sum_{i=1}^d \alpha_i$.
For non-negative integers $l$ and $k$, we define a Sobolev space on the space-time domain $\Omega_T$ as 
$ H^{l,k}(\Omega_T):=\{u\in L^2(\Omega_T):\partial^\alpha_x u\in L^2(\Omega_T) \text{ for all }\alpha \text{ with } 
0\leq |\alpha|\leq l, \text{ and } \partial^i_t u\in L^2(\Omega_T) \text{ for } i=0,1,\cdots,k\}$. 
Moreover, we define $H^{1,0}_0(\Omega_T):=\{u\in H^{1,0}(\Omega_T): u(t,x)=0 \text{ for } x\in \partial\Omega\}$ and 
$H^{1,1}_{0,0}(\Omega_T):=\{u\in H^{1,1}(\Omega_T): u(t,x)=0 \text{ for } x\in \partial\Omega,
\text{ and }u(0,x)=0 \text{ for } x\in \Omega \}$.
The weak space-time variational formulation of \eqref{PDEModel} reads as follows: find $u\in H^{1,1}_{0,0}(\Omega_T)$ such that 
\begin{equation}
b(u,v)+a(u,v)=(f,v)\; \forall v\in H^{1,0}_{0}(\Omega_T),
\label{variation_form}
\end{equation}
where $b(u,v)=\int_{\Omega_T} \partial_t uv$, $a(u,v)=\int_{\Omega_T} \kappa\nabla_xu\nabla_xv$ and 
$(f,v)=\int_{\Omega_T} fv$. 

To discretize the variational problem \eqref{variation_form}, let $\mathcal{T}_{H} $ be a partition of space domain $\Omega$ into non-overlapping shape-regular rectangular elements with maximal mesh size $H$. 
The time domain $(0,T]$ is partitioned into $\mathcal{T}_{\Delta t}=\{(t_{i},t_{i+1}]\}_{i=0}^{N_T-1}$ with the maximal temporal mesh size $\Delta t:=\text{max}_{0\leq i\leq N_T-1}\{t_{i+1}-t_{i}\}$.
A space-time coarse element $K^{(n,i)}$ is then defined by $(t_{n},t_{n+1}]\times K^{i}$
for $K^{i}\in\mathcal{T}_{H}$ and $(t_{n},t_{n+1}]\in \mathcal{T}_{\Delta T}$. Furthermore, let $\mathcal{T}_h$ be a refinement of $\mathcal{T}_H$ and $\mathcal{T}_{\delta t}$
a refinement of $\mathcal{T}_{\Delta T}$. 

For each coarse space element $K^i$, we define the oversampled region $K^{i}_{k_i} \subseteq \Omega$ by enlarging $K^i$ by $k_i \in \mathbb{N}$ layer(s), i.e., 
$$ K^{i}_0 := K_i, \quad K^i_{k_i} := \bigcup \{ K \in \mathcal{T}_H : K \cap K^i_{k_i -1} \neq \emptyset \} \quad \text{for } k_i = 1, 2, \cdots.$$
For simplicity, we denote $K^i_+$ a generic oversampling region related to the coarse element $K^i$ with a specific oversampling parameter $k_i$. See Figure \ref{fig:oversamp} for an illustration of $K^i_{1}$. 
For each space-time coarse element $K^{(n,i)}$, its oversampling region is defined as the region enlarging $K^i$ by some coarse spatial layers and some temporal layers. For example, letting $t_{n}^{-}=t_{\max\{n-M,0\}}$,  the oversampling region of $K^{(n,i)}$ with $N_s$ spatial and $M$ temporal oversampling layers is defined as $(t_{n}^{-},t_{n+1}]\times K_{N_s}^{i}$.
Similarly, we denote $K^{(n,i)}_+$ a generic oversampling region related to the coarse space-time element $K^{(n,i)}$. 
\begin{figure}[H]
		\centering
		\includegraphics[trim={0.1cm 1.8cm 0.2cm 1.5cm},clip,width=0.34 \textwidth]{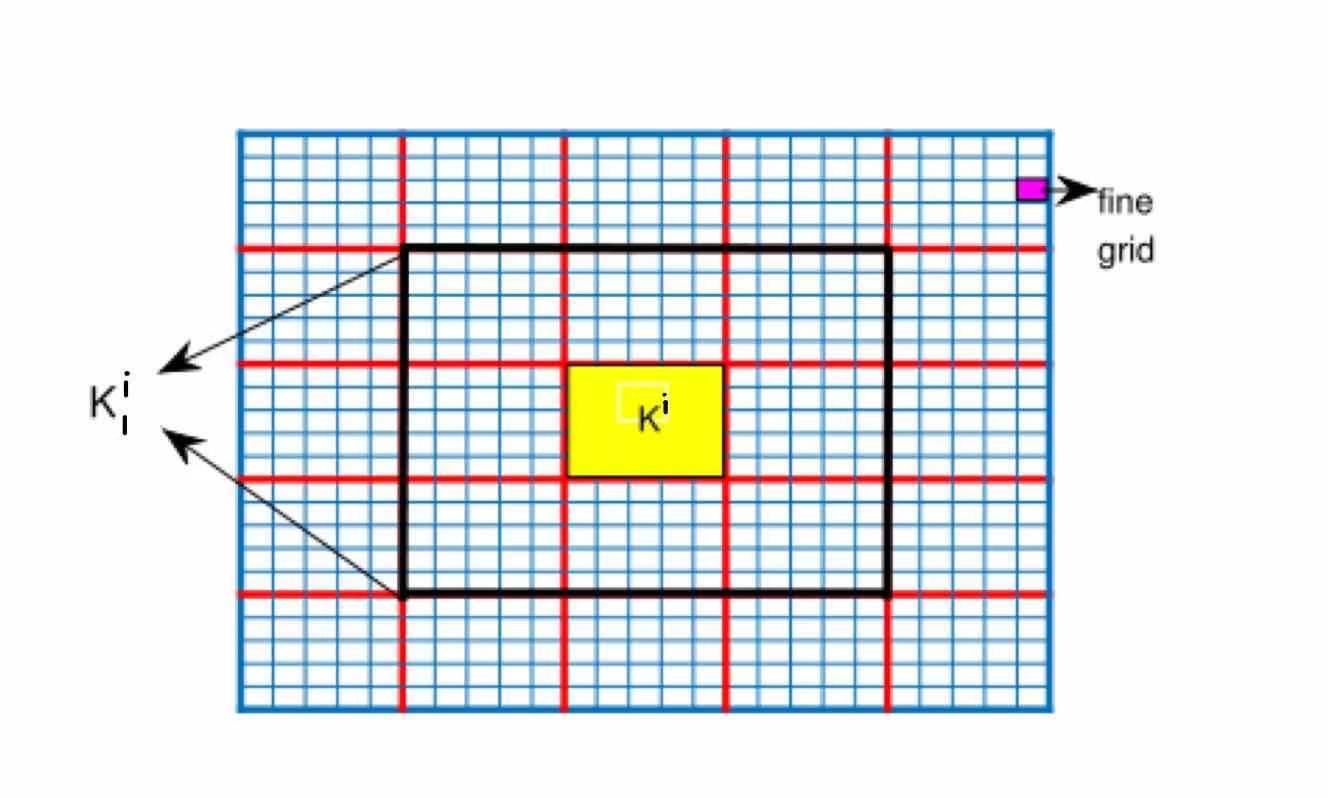}
		\caption{Illustration of oversampling space domain $K^i_1$.}
		\label{fig:oversamp}
\end{figure}

\subsection{Functional spaces and bilinear forms}
In this subsection, we introduce some functional spaces and bilinear forms used throughout the paper. For each $\omega\subset\Omega$ and $0\leq t_n<t_m\leq T$, we define the following functional spaces.
\begin{align*}
 V(t_{n},t_{m};\omega)&:=L^{2}(t_{n},t_{m};H^{1}(\omega))\cap H^{1}(t_{n},t_{m};L^{2}(\Omega)),\\
 W(t_{n},t_{m};\omega)&:=L^{2}(t_{n},t_{m};H^{1}(\omega)),\\
 V_{0}(t_{n},t_{m};\omega) & :=\{v\in V(t_{n},t_{m};\omega)|\;v(t_{n},\cdot)=0,\;v(t,x)=0\;\forall(t,x)\in(t_{n},t_{m})\times\partial\omega\},\\
 V_{d}(t_{n},t_{m};\omega) & :=\{v\in L^{2}(t_{n},t_{m};H^{1}(\omega))|\; v|_{(t_{k},t_{k+1})\times\omega}\in H^{1}(t_{k},t_{k+1};L^{2}(\Omega)),\; \forall k: n\leq k\leq m-1,\\
  &~~~~~~~~\text{  and } v(t,x)=0\;\forall(t,x)\in(t_{n},t_{m})\times\partial\omega\}.
\end{align*}

To shorten notations, we use $V$, $V_{0}$, $V_{1}$ and $W$  to denote $V(0,T;\Omega)$,
$V_{0}(0,T;\Omega)$, $V_{1}(0,T;\Omega)$, 
respectively.

Next, we will introduce some auxiliary functions $\psi_{j}^{(n,i)}$'s 
corresponding to different continua of the problem. Consider an oversampling region $K^{(n,i)}_{+}$ of the coarse space-time block $K^{(n,i)}$.
For any coarse space-time block $K^{(n',i')}\subset K^{(n,i)}_{+}$, we denote $F^{(n',i')}=\{f^{(n',i')}_k|f^{(n',i')}_k=D_c\cap K^{(n',i')}\neq \emptyset \}$
as a set containing discrete channels inside $K^{(n',i')}$. Set $L^{(n',i')}=|F^{(n',i')}|$. The functions $\psi_{j}^{(n,i)}$ for $j=0,1,\cdots, L^{(n,i)}$ are defined as follows:
\[
\int_{K^(n',i')}\tilde{\kappa} \psi_{0}^{(n,i)}= \delta_{n,n'}\delta_{i,i'} \text{ and } \int_{f^{(n',i')}_k}\tilde{\kappa} \psi_{\ell}^{(n,i)}= \delta_{n,n'}\delta_{i,i'} \delta_{\ell,k}.
\]
\\
We notice that
$\psi_{j}^{(n,i)}$ is supported in $K^{(n,i)}$. Let $V_{aux}^{(n,i)}=\text{span}_j\{\psi_j^{(n,i)}\}$ be the local auxiliary multiscale space corresponding to the coarse space-time block $K^{(n,i)}$. For any $\omega\in \Omega$ and $0\leq t_n< t_m\leq T$, we denote $V_{aux}(t_n,t_m;\omega)=\bigcup\{V_{aux}^{(k,i)}: K^i\subset\omega \text{ and } (t_k, t_{k+1})\subset (t_n,t_m)\}$. For simplicity, we shall use $V_{aux}$ to denote $V_{aux}(0,T;\Omega)$. We denote $N:=\dim(V_{aux} )$.

We now define $s(\cdot,\cdot)$ as a weighted $L^{2}$ inner production with
weighting function $\tilde{\kappa}:=\sum_{j} \kappa |\nabla \chi_j |^2$, that is
\[
s(u,v)=\int_{0}^{T}\int_{\Omega}\tilde{\kappa}uv.
\]
 Here, $\chi_j$'s are  the standard multiscale basis functions
defined coarse elementwise. On each coarse element $K\in \mathcal{T}_{H}$, it satisfies
\begin{align*} 
-\nabla\cdot(\kappa(x)\nabla\chi_j) &= 0  &&\quad\text{ in }\;\;K, \\
\chi_j &= g_j &&\quad\text{ on }\partial K, \nonumber
\end{align*}
where $g_j$ is affine over $\partial K$ with $g_j(O_i)=\delta_{ij}$ for all $i,j=1,\cdots, N$ and $\{O_i\}_{i=1}^{N}$ are the set of coarse nodes on $\mathcal{T}_{H}$. By its definition, $\chi_j$ is locally supported, that is,
$\text{supp}(\chi_j)\subset\omega_i.$

 Next, for each $\omega\subset\Omega$ and $0\leq t_{n}\leq t_{m}\leq T$,
we will define several bilinear operators $a(t_{n},t_{m};\omega;\cdot,\cdot):V(t_{n},t_{m};\omega)\times W(t_{n},t_{m};\omega)\rightarrow\mathbb{R}$,
$b(t_{n},t_{m};\omega;\cdot,\cdot):V(t_{n},t_{m};\omega)\times W(t_{n},t_{m};\omega)\rightarrow\mathbb{R}$
and $e(t_{n},t_{m};\omega;\cdot,\cdot):V(t_{n},t_{m};\omega)\times V(t_{n},t_{m};\omega)\rightarrow\mathbb{R}$
such that
\[
a(t_{n},t_{m};\omega;v,w)=\int_{t_{n}}^{t_{m}}\int_{\omega}\kappa\nabla v\cdot\nabla w,
\]
\[
e(t_{n},t_{m};\omega;v,w)=\sum_{i=n}^{m-1}\int_{t_{i}}^{t_{i+1}}\int_{\omega}\tilde{\kappa}^{-1}\partial_{t}v\partial_{t}w,
\]
\[
b(t_{n},t_{m};\omega;v,w)=\int_{t_{n}}^{t_{m}}\int_{\omega}\partial_t v w,
\]
and
\[
s(t_{n},t_{m};\omega;u,v)=\int_{t_n}^{t_m}\int_{\omega}\tilde{\kappa}uv.
\]
Then we will define $c(t_{n},t_{m};\omega;\cdot,\cdot)$ and $d(t_{n},t_{m};\omega;\cdot,\cdot)$
as
\[
c(t_{n},t_{m};\omega;v,w)=b(t_{n},t_{m};\omega;v,w)+a(t_{n},t_{m};\omega;v,w),
\]
 and
\[
d(t_{n},t_{m};\omega;v,w)=c(t_{n},t_{m};\omega;v,w)+e(t_{n},t_{m};\omega;v,w).
\]

Furthermore, we can define several norms related to the above bilinear operators.
For any $\omega\subset \Omega$ and $0\leq t_n<t_m\leq T$, we define:
\[
\|v\|_{L^{2}(\kappa,\omega)}^{2}=\int_{\omega} \kappa v^2,
\]
\[
\|v\|_{s(t_n,t_m;\omega)}^2=\int_{t_n}^{t_m} \int_\omega \tilde{\kappa} v^2,
\]
\[
\|v\|_{V(t_n,t_m;\omega)}^{2}=\int_{t_n}^{t_m}\|\nabla v\|_{L^{2}(\kappa,\omega)}^{2}+\int_{t_n}^{t_m}\|\tilde{\kappa}^{-\frac{1}{2}}\partial_{t}v\|_{L^{2}(\omega)}^2
\]
and
\[
\|v\|_{W(t_n,t_m;\omega)}^{2}=\int_{t_n}^{t_m}\int_{\omega}\kappa|\nabla v|^{2}.
\]
 We note that $\|v\|_{V(t_n,t_m;\omega)}^{2}=\|v\|_{W(t_n,t_m;\omega)}^{2}+\int_{t_n}^{t_m}\|\tilde{\kappa}^{-\frac{1}{2}}\partial_{t}v\|_{L^{2}(\omega)}^2$.
To simplify the notations, we denote $ \|v\|_{L^2(\omega)}=\|v\|_{L^2(0,T;\omega)}$, $ \|v\|_{s(\omega)}=\|v\|_{s(0,T;\omega)}$, $ \|v\|_{W(\omega)}=\|v\|_{W(0,T;\omega)}$, $ \|v\|_{V(\omega)}=\|v\|_{V(0,T;\omega)}$, 
$ \|v\|_{L^2(t_n,t_m)}=\|v\|_{L^2(t_n,t_m;\Omega)}$, $ \|v\|_{s(t_n,t_m)}=\|v\|_{s(t_n,t_m;\Omega)}$, $ \|v\|_{W(t_n,t_m)}=\|v\|_{W(t_n,t_m;\Omega)}$, $ \|v\|_{V(t_n,t_m)}=\|v\|_{V(t_n,t_m;\Omega)}$, 
$ \|v\|_{L^2}=\|v\|_{L^2(0,T;\Omega)}$, $ \|v\|_{s}=\|v\|_{s(0,T;\Omega)}$, $ \|v\|_{W}=\|v\|_{W(0,T;\Omega)}$ and $ \|v\|_{V}=\|v\|_{V(0,T;\Omega)}$.

\section{Space-time NLMC} \label{sec:BasisConstruction_SP}
In this section, we present the space-time NLMC upscaling method. First,  we construct global and the localized
space-time downscale operators, which can be used to define global space-time multiscale basis functions and local space-time multiscale basis functions. Then, we present the formulation of the coarse-grid solution.

\subsection{Global multiscale space}
We present the construction of the global downscale operator and the corresponding global numerical
solution.
We  define the global downscale operator $F:\mathbb{R}^{N}\rightarrow V_{0}\times V_{aux}$
by $U \mapsto (F_1(U),F_2(U))$ and
\begin{align*}
d(0,T;\Omega;F_{1}(U),w)-s(F_{2}(U),w) & =0,&\forall w\in V_{d},\\
s(F_{1}(U),\psi_{j}^{(n,i)}) & =U_{j}^{(n,i)},&\forall\psi_{j}^{(n,i)}\in V_{aux}.
\end{align*}
We remark here that the global downscale operator also defines the global basis functions.
Next, we can define the
global coarse grid problem as: finding $U\in\mathbb{R}^{N}$ such
that

\begin{equation}
  s(F_{2}(U),\psi_{j}^{(n,i)})=\int_{0}^{T}\int_{\Omega}f\psi_{j}^{(n,i)},\;\forall\psi_{j}^{(n,i)}\in V_{aux}
\label{gol_weak_form}  
\end{equation}
and the global numerical solution $u_{glo}$ is defined by 
\begin{equation}
   u_{glo}:=F_{1}(U). 
   \label{gol_sol}
\end{equation}

\subsection{Localization of global multiscale basis functions}

In this subsection, we will introduce the localized downscale operator
$F_{ms}=(F_{ms,1},F_{ms,2})$ and the localized coarse grid problem. For each space-time coarse block $K^{(n,i)}$ and its oversampled region $K_{+}^{(n,i)}=[t_n^{-},t_{n+1}]\times K^i_{+}$,
we  define a local downscale operator $F_{loc}^{(n,i)}:\mathbb{R}^{N}\rightarrow V_{0}(t_{n}^{-},t_{n+1};K_{+}^{i})\times V_{aux}(t_{n}^{-},t_{n+1};K_{+}^{i})$
by $U \mapsto (F_{loc,1}(U),F_{loc,2}(U))$ and
\begin{align*}
d(t_{n}^{-},t_{n+1};K_{+}^{i};F_{loc(U),1}^{(n,i)},w)-s(F_{loc,2}^{(n,i)}(U),w) & =0,&\forall w\in V_{d}(t_{n}^{-},t_{n+1};K_{+}^{i}),\\
s(F_{loc,1}^{(n,i)}(U),\psi_{j}^{(m,l)}) & =U_{j}^{(m,l)},&\forall\psi_{j}^{(m,l)}\in V_{aux}(t_{n}^{-},t_{n+1},K_{+}^{i}).
\end{align*}
Then the localized
downscale operator is defined by $F_{ms}(U)=\sum_{n,i}\chi^{(n,i)}F_{loc}^{(n,i)}(U)$
where $\chi^{(n,i)}$ is a partition of unity such that $\sum_{n,i}\chi^{(n,i)}\equiv1.$

The downscale operator also defines multiscale basis functions with support being $K_{+}^{(n,i)}$.
The coarse grid problem is then defined as: finding $U\in\mathbb{R}^{N}$
such that

\[
s(F_{ms,2}(U),\psi_{j}^{(n,i)})=\int_{0}^{T}\int_{\Omega}f\psi_{j}^{(n,i)},\;~~~\forall\psi_{j}^{(n,i)}\in V_{aux}
\]
and the localized numerical solution $u_{ms}$ is defined by 
\begin{equation}
    u_{ms}:=F_{ms,1}(U).
    \label{localized_sol}
\end{equation}
\section{Convergence Analysis} \label{sec:convergence_SP}

In this section, we will present a convergence analysis of the proposed method. We first prove in Theorem \ref{thm:glo_err} that the global numerical solution is a good approximation of the solution. 
Then we prove that the global downscale operators have a decay property with respect to the temporal oversampling layers and the local downscale operators have a decay property with respect to the spatial oversampling layers in Lemma \ref{lemma:decay_time} and Lemma \ref{lemma:decay_space}, respectively.
In this paper, we write $a \lesssim b$ if there exists a generic constant $C>0$ such that $ a \leq Cb$.

We first define a projection operator $\pi:L^{2}(0,T;L^{2}(\Omega))\rightarrow V_{aux}$
such that
\[
s(\pi(v),w)=s(v,w)\;\forall w\in V_{aux}.
\]
\textbf{Remark: }
It is easy to prove that  there exists a constant $C_0$ such that for all $w\in V$ 
\begin{equation}
   \cfrac{\|w-\pi(w)\|_{s}}{\|w\|_{V}}\leq C_{0}\Big(1+(\frac{\Delta t}{H^{2}})^{\frac{1}{2}}\Big).
\label{Assumption1} 
\end{equation}
We present the following result of the projection operator $\pi$.
\begin{lem}
\label{Assumtion2}
Let $K\in \mathcal{T}_H$ be any coarse spatial element and $[t_{n-1},t_n]\subset [0,T]$. Then there exists a constant $C_s>0$ such that for all $v_{aux}\in V_{aux}(t_{n-1},t_{n};K)$,
 there exist $w\in V_{0}(t_{n-1},t_{n};K)$ satisfying
\[
\pi(w)=v_{aux},\;\|w\|_{V}\leq C_{s}\|v_{aux}\|_{s}.
\]
\end{lem}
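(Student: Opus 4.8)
The statement is a local inf-sup / surjectivity result: every auxiliary function on a single space-time coarse block $K^{(n-1,i)}$ (the block sitting over $K$ between time levels $t_{n-1}$ and $t_n$) can be realized as $\pi(w)$ for some $w$ supported in that block, with a norm bound controlled by $\|v_{aux}\|_s$ and a constant $C_s$ independent of the contrast and the mesh. The plan is to build $w$ explicitly, block by block. Since $V_{aux}(t_{n-1},t_n;K)$ is spanned by the finitely many functions $\psi_0^{(n-1,i)},\dots,\psi_{L^{(n-1,i)}}^{(n-1,i)}$ associated with that block (one for the matrix part, one for each discrete channel $f_k^{(n-1,i)}$), it suffices by linearity to treat a single $\psi_j^{(n-1,i)}$ and then take linear combinations, tracking constants carefully so the final bound is in terms of $\|v_{aux}\|_s$ rather than componentwise data.

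First I would fix the target $v_{aux}=\sum_j c_j \psi_j^{(n-1,i)}$ and look for $w\in V_0(t_{n-1},t_n;K)$ — i.e. $w$ vanishing at $t=t_{n-1}$ and on $\partial K$ — such that $s(w,\psi_\ell^{(m,l)})=s(v_{aux},\psi_\ell^{(m,l)})$ for every auxiliary function. Because $\psi_\ell^{(m,l)}$ is supported on the single block $K^{(m,l)}$ and $w$ is supported on $K^{(n-1,i)}$, the only nontrivial constraints are the finitely many integral conditions
\[
\int_{t_{n-1}}^{t_n}\!\!\int_{K}\tilde\kappa\, w\,\psi_\ell^{(n-1,i)} = \int_{t_{n-1}}^{t_n}\!\!\int_{K}\tilde\kappa\, v_{aux}\,\psi_\ell^{(n-1,i)},\qquad \ell=0,1,\dots,L^{(n-1,i)}.
\]
Using the normalization defining $\psi_j^{(n-1,i)}$ (namely $\int_{K}\tilde\kappa\,\psi_0 = 1$ on the block and $\int_{f_k}\tilde\kappa\,\psi_\ell=\delta_{\ell k}$), these conditions are equivalent to prescribing the $\tilde\kappa$-weighted average of $w$ over the matrix part of the block and over each channel $f_k^{(n-1,i)}$. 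I would produce such a $w$ by a standard construction: take $w$ to be separable in time and space, e.g. $w(t,x)=\eta(t)\,\phi(x)$ with $\eta$ a fixed smooth cutoff in time vanishing at $t_{n-1}$ (so $w\in V_0$ automatically), and $\phi$ a fixed finite-element-type bump on $K$ vanishing on $\partial K$, suitably scaled on the matrix subregion and on each channel subregion so as to hit the prescribed weighted averages; alternatively (and more robustly) solve a tiny finite-dimensional least-norm problem on the block. Either way, the $V$-norm of $w$ — which is $\|w\|_W^2 + \int\|\tilde\kappa^{-1/2}\partial_t w\|_{L^2}^2$ — is estimated by combining (i) a Poincaré-type inequality on the coarse block relating $\|\nabla_x w\|_{L^2(\kappa,K)}$ to the prescribed averages, exactly as in the spatial CEM/NLMC analysis of \cite{chung2018non}, and (ii) an elementary bound on $\partial_t w$ coming from the fixed profile $\eta$ and the $\tilde\kappa^{-1}$ weight, using that $\Delta t$ enters only through the time integration of a fixed-shape function. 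Crucially, the weight $\tilde\kappa=\sum_j\kappa|\nabla\chi_j|^2$ is designed precisely so that these Poincaré constants on a coarse block are contrast-independent (this is the role $\tilde\kappa$ plays throughout GMsFEM-type arguments), which is what yields a $C_s$ independent of $\kappa_i/\kappa_m$.

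The main obstacle I anticipate is the combination of two things: getting the right behavior of the $\partial_t w$ contribution to $\|w\|_V$ and keeping the constant genuinely independent of both the contrast and of the ratio $\Delta t/H^2$. For the contrast I would lean on the standard fact that, against the measure $\tilde\kappa\,dx$, weighted averages over the matrix and over each channel control the $\kappa$-weighted Dirichlet energy on the block with a constant depending only on the coarse geometry (not on $\kappa_i/\kappa_m$); this requires care because the channels $f_k^{(n-1,i)}$ are where $\tilde\kappa$ is large, so one must argue that the contributions from $|\nabla\chi_j|$ near channel interfaces are the dominant balancing terms. For the time derivative, the subtlety is that $\|\tilde\kappa^{-1/2}\partial_t w\|_{L^2}$ scales like $(\Delta t)^{-1/2}$ for a fixed-shape $\eta$, while $\|v_{aux}\|_s$ scales like $(\Delta t)^{1/2}$, so naively one picks up a factor $\Delta t^{-1}$; this is avoided by also allowing $\phi$ to carry part of the normalization and, more importantly, by noting that the bound we actually need is of the form $\|w\|_V\le C_s\|v_{aux}\|_s$ with $C_s$ allowed to depend on the discretization through the same quantity $1+(\Delta t/H^2)^{1/2}$ that already appears in the Remark \eqref{Assumption1} — so the honest statement, and the one I would prove, is that $C_s$ is contrast-independent and depends on the mesh only through that benign ratio, mirroring the spatial-only case. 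I would close by summing the single-block estimates over the basis of $V_{aux}(t_{n-1},t_n;K)$ and using norm-equivalence on the finite-dimensional auxiliary space to pass from the componentwise control to the clean bound $\|w\|_V\le C_s\|v_{aux}\|_s$.
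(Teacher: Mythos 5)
The paper does not actually prove this lemma: it is dispatched in one sentence by citing Lemma~3.2 of \cite{zhao2020analysis}, which is a purely spatial NLMC stability result. So your constructive route --- reduce by linearity to the moment conditions on the single block $K^{(n-1,i)}$ (correctly observing that disjoint supports kill all other constraints), lift by a tensor-product ansatz $w=\eta(t)\phi(x)$ with $\eta(t_{n-1})=0$, and control $\|w\|_W$ by the contrast-independent weighted Poincar\'e estimates that $\tilde\kappa$ is designed to furnish --- is exactly the natural space-time adaptation of that spatial argument, and the spatial part of your estimate is sound.

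The genuine problem is your treatment of the time-derivative term, which is the only part of the lemma that is not already in the spatial reference. You correctly identify that $\|\tilde\kappa^{-1/2}\partial_t w\|_{L^2}$ threatens the constant, but your proposed resolution --- that the resulting dependence is the ``benign'' factor $1+(\Delta t/H^2)^{1/2}$ already appearing in \eqref{Assumption1} --- is wrong, and the direction of the dependence is reversed. A scaling computation for the matrix auxiliary function shows: the moment normalization forces $\|w\|_s^2\sim\|v_{aux}\|_s^2\sim \big(\int_{K^{(n-1,i)}}\tilde\kappa\big)^{-1}$, the spatial energy $\int\kappa|\nabla w|^2$ is comparable to this (good), but for any $\eta$ with $\eta(t_{n-1})=0$ one has $\int|\eta'|^2\gtrsim 1/\Delta t$, so the term $\int\tilde\kappa^{-1}|\partial_t w|^2$ contributes a factor of order $H^2/(\kappa\,\Delta t)$ relative to $\|v_{aux}\|_s^2$, i.e.\ $C_s\sim 1+H^2/(\kappa_0\Delta t)$. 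This is harmless only in the regime $\Delta t\gtrsim H^2$ and degrades as $\Delta t\to 0$; it is not absorbed by the factor you cite, and since $C_s$ enters $\sigma$ in Lemma~\ref{lemma2} as a standalone constant, its actual dependence must be identified. A second, smaller gap: because the channels $f_k^{(n-1,i)}$ are slanted space-time tubes (the channel moves), a single separable profile $\eta(t)\phi(x)$ cannot in general prescribe the matrix and channel moments independently; your fallback of a finite-dimensional least-$V$-norm problem on the block does handle this and should be taken as the primary construction, with the norm-equivalence constant on $V_{aux}^{(n-1,i)}$ checked to be independent of the contrast via the $\tilde\kappa$-weighting of both the moments and the $s$-norm.
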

Lemma \ref{Assumtion2} can be proved using a similar technique in Lemma 3.2 \cite{zhao2020analysis}. For brevity of this article, we omit the proof.

Next, we establish the following estimates for later use in the analysis.
\begin{lem}\label{lemma: ineq_norm}
For any $\omega\subset \Omega$ and $0\leq t_{n}< t_{m}\leq T$,
 the following inequalities hold for any $u\in V_{0}(t_{n},t_{m};\omega)$,
\begin{align}
\|u\|_{W(t_{n},t_{m};\omega)}^{2} & \leq c(t_{n},t_{m};\omega;u,u),\\
\|u\|_{V(t_{n},t_{m};\omega)}^{2} & \leq d(t_{n},t_{m};\omega;u,u).
\end{align}
\end{lem}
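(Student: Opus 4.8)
The plan is to prove the two inequalities by directly unwinding the definitions of the bilinear forms $c$ and $d$ and using the boundary/initial conditions built into the space $V_0(t_n,t_m;\omega)$.

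\textbf{First inequality.} Recall that $c(t_n,t_m;\omega;u,u) = b(t_n,t_m;\omega;u,u) + a(t_n,t_m;\omega;u,u)$, where $a(t_n,t_m;\omega;u,u) = \int_{t_n}^{t_m}\int_\omega \kappa|\nabla u|^2 = \|u\|_{W(t_n,t_m;\omega)}^2$ and $b(t_n,t_m;\omega;u,u) = \int_{t_n}^{t_m}\int_\omega \partial_t u\, u$. So it suffices to show $b(t_n,t_m;\omega;u,u)\ge 0$ for $u\in V_0(t_n,t_m;\omega)$. Since $u\in H^1(t_n,t_m;L^2(\Omega))$, we may integrate by parts in time: $\int_{t_n}^{t_m}\int_\omega \partial_t u\, u = \tfrac12\int_\omega\big(u(t_m,\cdot)^2 - u(t_n,\cdot)^2\big)$. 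The definition of $V_0(t_n,t_m;\omega)$ forces $u(t_n,\cdot)=0$, hence $b(t_n,t_m;\omega;u,u) = \tfrac12\|u(t_m,\cdot)\|_{L^2(\omega)}^2\ge 0$, which gives $\|u\|_{W(t_n,t_m;\omega)}^2 \le c(t_n,t_m;\omega;u,u)$.

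\textbf{Second inequality.} Recall $d(t_n,t_m;\omega;u,u) = c(t_n,t_m;\omega;u,u) + e(t_n,t_m;\omega;u,u)$ with $e(t_n,t_m;\omega;u,u) = \sum_{i=n}^{m-1}\int_{t_i}^{t_{i+1}}\int_\omega \tilde\kappa^{-1}(\partial_t u)^2 = \int_{t_n}^{t_m}\|\tilde\kappa^{-1/2}\partial_t u\|_{L^2(\omega)}^2$. From the first part, $c(t_n,t_m;\omega;u,u)\ge \|u\|_{W(t_n,t_m;\omega)}^2$, so
\[
d(t_n,t_m;\omega;u,u) \ge \|u\|_{W(t_n,t_m;\omega)}^2 + \int_{t_n}^{t_m}\|\tilde\kappa^{-1/2}\partial_t u\|_{L^2(\omega)}^2 = \|u\|_{V(t_n,t_m;\omega)}^2,
\]
using the stated identity $\|v\|_{V(t_n,t_m;\omega)}^2 = \|v\|_{W(t_n,t_m;\omega)}^2 + \int_{t_n}^{t_m}\|\tilde\kappa^{-1/2}\partial_t v\|_{L^2(\omega)}^2$. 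This finishes the proof.

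\textbf{Main obstacle.} The argument is essentially routine; the only subtlety is justifying the integration-by-parts identity $\int_{t_n}^{t_m}\int_\omega \partial_t u\, u = \tfrac12\int_\omega(u(t_m,\cdot)^2 - u(t_n,\cdot)^2)$ and the meaning of the trace $u(t_n,\cdot)$. This is legitimate because $u\in H^1(t_n,t_m;L^2(\Omega))$ embeds into $C([t_n,t_m];L^2(\Omega))$, so the pointwise-in-time traces are well-defined in $L^2(\omega)$ and the fundamental theorem of calculus applies to $t\mapsto \tfrac12\|u(t,\cdot)\|_{L^2(\omega)}^2$, which is absolutely continuous. One should also note for the second inequality that the $e$-term is defined as a sum over the fine time intervals $(t_i,t_{i+1})$ within $(t_n,t_m)$, so $u$ need only be piecewise-in-time $H^1$; but for $u\in V_0(t_n,t_m;\omega)\subset V(t_n,t_m;\omega)$ this is automatic and the sum telescopes into the full integral, so no extra care is needed here.
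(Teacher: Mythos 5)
Your proof is correct and matches the paper's argument essentially verbatim: both establish $b(t_n,t_m;\omega;u,u)=\tfrac12\|u(t_m,\cdot)\|_{L^2(\omega)}^2\ge 0$ via the initial condition $u(t_n,\cdot)=0$ built into $V_0$, and then add the manifestly nonnegative $e$-term for the second inequality. The only difference is that you spell out the trace/integration-by-parts justification, which the paper leaves implicit.
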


\begin{proof}
It follows from the definitions of bilinear operators and $u(t_n)=0$ that we have
\begin{align*}
c(t_{n},t_{m};\omega;u,u) & =\int_{t_{n}}^{t_{m}}\int_{\omega}\kappa\nabla u\cdot\nabla u+\int_{t_{n}}^{t_{m}}\int_{\omega}(\partial_{t}u)u\\
 & =\int_{t_{n}}^{t_{m}}\int_{\omega}\kappa\nabla u\cdot\nabla u+\cfrac{1}{2}\|u(t_{m})\|_{L^{2}(\omega)}^{2}\\
 & \geq\|u\|_{W(t_{n},t_{m};\omega)}^{2}
\end{align*}
and
\begin{align*}
e(t_{n},t_{m};\omega;u,u) & =\sum_{i=n}^{m-1}\int_{t_{i}}^{t_{i+1}}\int_{\omega}\tilde{\kappa}^{-1}\partial_{t}u\partial_{t}u\\
 & =\|\tilde{\kappa}^{-\frac{1}{2}}\partial_{t}u\|_{L^{2}(t_{n},t_{m};\omega)}^2.
\end{align*}
Therefore, we have
\[
\|u\|_{V(t_{n},t_{m};\omega)}^{2}\leq c(t_{n},t_{m};\omega;u,u)+e(t_{n},t_{m};\omega;u,u)=d(t_{n},t_{m};\omega;u,u),
\]
which proves the second inequality. This completes the proof.
\end{proof}
To prove the convergence result of the proposed method, we first show the convergence result of using the global multiscale basis functions.
\begin{theorem}
\label{thm:glo_err}
Let $u$ be the exact solution of  \eqref{PDEModel} and $u_{glo}$ be the solution of \eqref{gol_sol}. We have
\[
\|u-u_{glo}\|_{V}\leq C_{0}(1+(\frac{\Delta t}{H^{2}})^{\frac{1}{2}})\|\tilde{\kappa}^{-\frac{1}{2}}f\|_{L^{2}}+\|\tilde{\kappa}^{-\frac{1}{2}}\partial_{t}u\|_{L^{2}}.
\]
Moreover, if the multiscale partition of unity $\chi_{i}$ is replaced by the bilinear partition of unity in the definition of $\tilde{\kappa}$, we have 
\[
\|u-u_{glo}\|_{V}\lesssim C_{0}(H+(\Delta t)^{\frac{1}{2}})\|\kappa^{-\frac{1}{2}}f\|_{L^{2}}+H\|\kappa^{-\frac{1}{2}}\partial_{t}u\|_{L^{2}}.
\]
\end{theorem}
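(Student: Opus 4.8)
The plan is to combine the coercivity-type estimate of Lemma~\ref{lemma: ineq_norm}, the orthogonality encoded in the definition of the global downscale operator $F$, and the projection bound \eqref{Assumption1} from the Remark. Set $\eta := u - u_{glo}$. Since $u \in H^{1,1}_{0,0}(\Omega_T) \subset V_0$ and $u_{glo}=F_1(U)\in V_0$, we have $\eta \in V_0(0,T;\Omega)$, so Lemma~\ref{lemma: ineq_norm} gives $\|\eta\|_V^2 \le d(0,T;\Omega;\eta,\eta)$, and the whole proof reduces to estimating this quantity.

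First I would record two structural facts. (i) Testing the first defining relation of $F$ against $\psi\in V_{aux}$ and invoking the coarse-grid equation \eqref{gol_weak_form} gives $d(0,T;\Omega;u_{glo},\psi)=s(F_2(U),\psi)=(f,\psi)$; testing instead against any admissible $w$ with $\pi(w)=0$ gives $d(0,T;\Omega;u_{glo},w)=0$, since $F_2(U)\in V_{aux}$ forces $s(F_2(U),w)=0$. (ii) Because $u$ solves \eqref{variation_form} and $\eta$ is an admissible test function (it lies in $H^{1,0}_0(\Omega_T)$, so no broken-space subtlety arises here), $c(0,T;\Omega;u,\eta)=(f,\eta)$, hence $d(0,T;\Omega;u,\eta)=(f,\eta)+e(0,T;\Omega;u,\eta)$. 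Now decompose $\eta=\pi(\eta)+w$ with $w:=\eta-\pi(\eta)$, so $\pi(w)=0$. Using linearity of $d$ and (i) to evaluate $d(u_{glo},\eta)=d(u_{glo},\pi(\eta))+d(u_{glo},w)=(f,\pi(\eta))+0$, together with (ii) for $d(u,\eta)$, I obtain (all forms over $(0,T;\Omega)$)
\[
d(\eta,\eta) = d(u,\eta) - d(u_{glo},\eta) = \big[(f,\eta)+e(u,\eta)\big] - (f,\pi(\eta)) = (f,w) + e(u,\eta).
\]
Hence $\|\eta\|_V^2 \le (f,w) + e(u,\eta)$.

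It remains to bound the two terms. For the source term, Cauchy--Schwarz with weight $\tilde\kappa$ gives $(f,w)\le \|\tilde\kappa^{-1/2}f\|_{L^2}\|w\|_s$, and since $w=\eta-\pi(\eta)$, applying \eqref{Assumption1} to $\eta$ yields $\|w\|_s\le C_0\big(1+(\Delta t/H^2)^{1/2}\big)\|\eta\|_V$. For the consistency term arising from $d=c+e$, Cauchy--Schwarz gives $e(u,\eta)\le \|\tilde\kappa^{-1/2}\partial_t u\|_{L^2}\|\tilde\kappa^{-1/2}\partial_t\eta\|_{L^2}\le \|\tilde\kappa^{-1/2}\partial_t u\|_{L^2}\|\eta\|_V$, using $\|\tilde\kappa^{-1/2}\partial_t\eta\|_{L^2}\le\|\eta\|_V$ from the definition of $\|\cdot\|_V$. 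Substituting both bounds into $\|\eta\|_V^2 \le (f,w)+e(u,\eta)$ and dividing by $\|\eta\|_V$ produces the first inequality. The second follows by specialization: for the bilinear partition of unity one has $\sum_j|\nabla\chi_j|^2 \gtrsim H^{-2}$ pointwise, hence $\tilde\kappa^{-1}\lesssim H^2\kappa^{-1}$, i.e.\ $\tilde\kappa^{-1/2}\lesssim H\kappa^{-1/2}$; inserting this into the first inequality and using $C_0\big(1+(\Delta t/H^2)^{1/2}\big)H = C_0\big(H+(\Delta t)^{1/2}\big)$ gives the claimed bound in terms of $\|\kappa^{-1/2}f\|_{L^2}$ and $\|\kappa^{-1/2}\partial_t u\|_{L^2}$.

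The step needing the most care is the algebraic identity for $d(\eta,\eta)$: since $d$ is not symmetric (the transport-like form $b$ is not), one must split the \emph{second} argument of $d(\eta,\eta)$ and use only the information about $d(u_{glo},\cdot)$ from (i), rather than anything about $d(\cdot,u_{glo})$ or about $d(\eta,\psi)$. One also needs $V_{aux}$ to be admissible as a test space in the relations defining $F$, and the elementary bound $\|\tilde\kappa^{-1/2}\partial_t v\|_{L^2}\le\|v\|_V$; both are immediate from the definitions. The only genuinely new feature compared with the elliptic/stationary template is the extra term $e(u,\eta)$, which is precisely the inconsistency created by the method testing with $d=c+e$ while the PDE only satisfies $c(u,\cdot)=(f,\cdot)$, and it is exactly this term that contributes the $\|\tilde\kappa^{-1/2}\partial_t u\|_{L^2}$ summand in the estimate. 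All remaining manipulations are routine uses of Cauchy--Schwarz, Lemma~\ref{lemma: ineq_norm}, and \eqref{Assumption1}.
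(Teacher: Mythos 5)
Your argument is correct and follows essentially the same route as the paper's proof: the identity $d(0,T;\Omega;u-u_{glo},v)=(f,v-\pi(v))+e(0,T;\Omega;u,v)$ (which the paper obtains by first observing $F_2(U)=\pi(\tilde f)$ with $\tilde f=f/\tilde\kappa$ and then using the $s$-orthogonality of $\pi$), followed by testing with $v=u-u_{glo}$, Lemma~\ref{lemma: ineq_norm}, Cauchy--Schwarz, and \eqref{Assumption1}, and the same $\tilde\kappa^{-1/2}\lesssim H\kappa^{-1/2}$ specialization for the bilinear partition of unity. No substantive differences.
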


\begin{proof}
Set $\tilde{f}:=\cfrac{f}{\tilde{\kappa}}$. It follows directly from \eqref{gol_weak_form} that we have
\[
s(F_{2}(U),v)=s(\tilde{f},v),\;\forall v\in V_{aux}.
\]
Therefore,
we have
\[
F_{2}(U)=\pi(\tilde{f}).
\]
We have
\begin{align*}
d(0,T;\Omega;u_{glo},v) & =s(F_{2}(U),v)\;\forall v\in W
\end{align*}
and
\[
c(0,T;\Omega;u,v)=s(\tilde{f},v)\;\forall v\in W.
\]
Then for any $v\in W$, the following equalities hold: 
\begin{align*}
d(0,T;\Omega;u-u_{glo},v) & =s(\tilde{f}-\pi(\tilde{f}),v)+e(0,T;\Omega;u,v)\\
 & =s(\tilde{f}-\pi(\tilde{f}),v-\pi(v))+e(0,T;\Omega;u,v)\\
 & =(f,v-\pi(v))+e(0,T;\Omega;u,v).
\end{align*}
Choosing $v=u-u_{glo}$ and utilizing Lemma \ref{lemma: ineq_norm} and \eqref{Assumption1}, we have
\begin{align*}
\|u-u_{glo}\|_{V} & \leq\|\tilde{\kappa}^{-\frac{1}{2}}f\|_{L^{2}}\cfrac{\|(u-u_{glo})-\pi(u-u_{glo})\|_{s}}{\|u-u_{glo}\|_{V}}+\|\tilde{\kappa}^{-\frac{1}{2}}\partial_{t}u\|_{L^{2}}\\
 & \leq C_{0}(1+(\frac{\Delta t}{H^{2}})^{\frac{1}{2}})\|\tilde{\kappa}^{-\frac{1}{2}}f\|_{L^{2}}+\|\tilde{\kappa}^{-\frac{1}{2}}\partial_{t}u\|_{L^{2}}.
\end{align*}
The second part of the theorem follows from the definition of $\tilde{\kappa}$ and $|\nabla\chi_{i}|=O(H^{-1})$.
\end{proof}
Theorem \ref{thm:glo_err} justifies the use of global downscale operators. Moreover, it also implies that the coarse time step size should be at most $O(H^2)$ to ensure a good accuracy. To prove our main theorem, we need two important lemmas.
We first show in Lemma \ref{lemma:decay_time} that the global downscale operators have a decay property with respect to the temporal oversampling layers. Then we prove in Lemma \ref{lemma:decay_space} that the local downscale operators have a decay property with respect to the spatial oversampling layers. Our main theorem shall be presented in Theorem \ref{thm:main_sp}. We first prove the following lemma, which will be frequently used in proofs.
\begin{lem}
\label{lemma2}
For any $K\in\mathcal{T}_{H}$ and $\Delta t\leq t_n\leq T$, if 
$v_{1}\in V(t_{n-1},t_{n};K)$ and $v_2\in V_{aux}(t_{n-1},t_{n};K)$
satisfy
\[
d(t_{n-1},t_{n};K;v_{1},w)=s(v_{2},w)\;\forall w\in V_{d}(t_{n-1},t_{n};K),
\]
then we have
\[
\|v_{2}\|_{s(t_{n-1},t_{n};K)}\leq\sigma\|v_{1}\|_{V(t_{n-1},t_{n};K)},
\]
where $\sigma=\Big(C_{s}\Big(C_{0}\Big(1+(\frac{\Delta t}{H^{2}})^{\frac{1}{2}}\Big)+1\Big)+1\Big).$
\end{lem}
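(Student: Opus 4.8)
The plan is to test the defining identity against a suitable preimage of $v_2$ under the projection $\pi$ and then estimate the resulting bilinear form by Cauchy--Schwarz; throughout, all norms and bilinear forms without explicit arguments are understood over the block $(t_{n-1},t_n)\times K$. Since $v_2\in V_{aux}(t_{n-1},t_n;K)$, Lemma~\ref{Assumtion2} supplies $w_0\in V_0(t_{n-1},t_n;K)$ with $\pi(w_0)=v_2$ and $\|w_0\|_{V}\le C_s\|v_2\|_{s}$. As $V_0(t_{n-1},t_n;K)\subseteq V_d(t_{n-1},t_n;K)$, this $w_0$ is an admissible test function, so the hypothesis gives $d(v_1,w_0)=s(v_2,w_0)$. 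On the other hand, the defining property of $\pi$ (that $s(u-\pi(u),\cdot)=0$ on $V_{aux}$) applied with $u=w_0$, together with $v_2\in V_{aux}$, yields $s(v_2,w_0)=s(v_2,\pi(w_0))=s(v_2,v_2)=\|v_2\|_{s}^2$. Hence $\|v_2\|_{s}^2=d(v_1,w_0)$, and it remains to bound the right-hand side.

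Using $d=b+a+e$ (from the definitions of $c$ and $d$), I would estimate each piece. A weighted Cauchy--Schwarz inequality gives $b(v_1,w_0)=\int\int\partial_t v_1\,w_0\le\|\tilde\kappa^{-\frac12}\partial_t v_1\|_{L^2}\,\|w_0\|_{s}\le\|v_1\|_{V}\|w_0\|_{s}$. Applying Cauchy--Schwarz to $a$ and to $e$ separately and then the elementary bound $AC+BD\le(A^2+B^2)^{\frac12}(C^2+D^2)^{\frac12}$ with $A=\|v_1\|_{W}$, $B=\|\tilde\kappa^{-\frac12}\partial_t v_1\|_{L^2}$, $C=\|w_0\|_{W}$, $D=\|\tilde\kappa^{-\frac12}\partial_t w_0\|_{L^2}$, one gets $a(v_1,w_0)+e(v_1,w_0)\le\|v_1\|_{V}\|w_0\|_{V}$. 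Together these give $\|v_2\|_{s}^2\le\|v_1\|_{V}\big(\|w_0\|_{V}+\|w_0\|_{s}\big)$.

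To finish, I would bound $\|w_0\|_{s}$ by $\|v_2\|_{s}$: by the triangle inequality, the remark \eqref{Assumption1} (used locally on the block), and the Lemma~\ref{Assumtion2} estimate,
\[
\|w_0\|_{s}\le\|w_0-\pi(w_0)\|_{s}+\|\pi(w_0)\|_{s}\le C_0\big(1+(\tfrac{\Delta t}{H^2})^{\frac12}\big)\|w_0\|_{V}+\|v_2\|_{s}\le\Big(C_sC_0\big(1+(\tfrac{\Delta t}{H^2})^{\frac12}\big)+1\Big)\|v_2\|_{s}.
\]
Substituting this and $\|w_0\|_{V}\le C_s\|v_2\|_{s}$ into the last inequality of the preceding paragraph gives $\|v_2\|_{s}^2\le\sigma\|v_1\|_{V}\|v_2\|_{s}$ with precisely $\sigma=C_s\big(C_0(1+(\Delta t/H^2)^{\frac12})+1\big)+1$, and dividing by $\|v_2\|_{s}$ (the statement being trivial when $v_2=0$) completes the proof. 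The only real idea here is the choice $w_0=\pi^{-1}(v_2)$; the main points requiring care are checking that $w_0$ is admissible in $V_d$ and that \eqref{Assumption1} applies on the local space-time block, and bookkeeping the constants so that the bound comes out exactly as $\sigma$ rather than, say, with an extra factor of $C_s$.
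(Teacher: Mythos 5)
Your proposal is correct and follows essentially the same route as the paper's proof: test with the preimage $w$ of $v_2$ supplied by Lemma~\ref{Assumtion2}, use the projection property of $\pi$ to identify $s(v_2,w)=\|v_2\|_s^2$, bound $d(v_1,w)\le\|v_1\|_V(\|w\|_s+\|w\|_V)$ by Cauchy--Schwarz, and control $\|w\|_s+\|w\|_V$ via \eqref{Assumption1} and the bound $\|w\|_V\le C_s\|v_2\|_s$ to obtain exactly the constant $\sigma$. If anything, your write-up is slightly more careful than the paper's (it justifies the identity $s(v_2,w)=\|v_2\|_s^2$ explicitly and checks admissibility of $w$ in $V_d$), so no gaps to report.
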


\begin{proof}
It follows from Lemma \ref{Assumtion2} that there exists $w\in V_{0}(t_{n-1},t_{n};K)$ such that
\begin{equation*}
 \pi(w)=v_{2},\;\|w\|_{V}\leq C_s\|v_{2}\|_{s}.
\end{equation*}

Therefore, we have
\begin{equation}
 \begin{aligned}
\|v_{2}\|_{s}^{2} & = s(v_{2},w)\\
&=d(t_{n-1},t_{n};K;v_{1},w)\\
 & \leq\int_{t_{n-1}}^{t_{n}}\int_{K}\partial_{t}v_{1}w+\int_{t_{n-1}}^{t_{n}}\int_{K}\kappa\nabla v_{1}\cdot\nabla w+\int_{t_{n-1}}^{t_{n}}\int_{K}\tilde{\kappa}^{-1}\partial_{t}v_{1}\partial_{t}w\\
 & \leq\|v_{1}\|_{V(t_{n-1},t_{n};K)}\Big(\|w\|_{s(t_{n-1},t_{n};K)}+\|w\|_{V(t_{n-1},t_{n};K)}\Big).
\end{aligned}
\label{lemma:ineq1}
\end{equation}
Notice that 
\begin{equation*}
    \begin{aligned}
    \|w\|_{s(t_{n-1},t_{n};K)}&=\|w-\pi(w)\|_{s(t_{n-1},t_{n};K)}+\|v_{2}\|_{s(t_{n-1},t_{n};K)}\\
    &\leq C_{0}\Big(1+(\frac{\Delta t}{H^{2}})^{\frac{1}{2}}\Big)\|w\|_{V(t_{n-1},t_{n};K)}+\|v_{2}\|_{s(t_{n-1},t_{n};K)}.
    \end{aligned}
\end{equation*}
Then the following inequalities hold true:
\begin{equation}
 \begin{aligned}
\Big(\|w\|_{s(t_{n-1},t_{n};K)}+\|w\|_{V(t_{n-1},t_{n};K)}\Big) & \leq\Big(C_{0}\Big(1+(\frac{\Delta t}{H^{2}})^{\frac{1}{2}}\Big)+1\Big)\|w\|_{V(t_{n-1},t_{n};K)}+\|v_{2}\|_{s(t_{n-1},t_{n};K)}\Big)\\
 & \leq\Big(C_{s}\Big(C_{0}\Big(1+(\frac{\Delta t}{H^{2}})^{\frac{1}{2}}\Big)+1\Big)+1\Big)\|v_{2}\|_{s(t_{n-1},t_{n};K)}
\end{aligned}
\label{lemma:ineq2}
\end{equation}
A combination of \eqref{lemma:ineq1} and \eqref{lemma:ineq2} completes the proof. 
\end{proof}

Before deriving the error between the global and localized downscale operators, we introduce some notions to be used in the analysis. We first define two cut-off functions: cut-off function in temporal variable $\chi_{k,m}(t)$ and cut-off function in spatial variable $\chi^s_{k,m}(x)$.
\begin{definition}
For two non-negative integers $k,m$ with $0\leq k<m$, 
\begin{itemize}
    \item the cut-off function in time $\chi_{k,m}(t)$ is  defined as
\[
\chi_{k,m}(t):=\begin{cases}
1, & if\;t>t_{m},\\
\cfrac{t-t_{k}}{t_{m}-t_{k}}, & if\;t_{k}\leq t\leq t_{m},\\
0, & if\;t\leq t_{k};
\end{cases}
\]
\item the cut-off function in space $\chi^s_{k,m}(x)$ is  defined as a smooth function such that 
\begin{enumerate}[label=(\alph*)]
    \item $\chi^s_{k,m}(x)\in [0,1]$,
    \item \[
\chi^s_{k,m}(x)=\begin{cases}
1, & on\; K_k,\\
0, & on\; K_m,
\end{cases}
\]
\item $|\nabla\chi^s_{k,m}|^{2}\leq C_{\chi}\sum_{i}|\nabla\chi_{i}|^{2}$ for some constant $C_{\chi}$.
\end{enumerate}
\end{itemize}
\end{definition}
Note that $\chi_{k,m}(t)\in [0,1]$.  To simply the notations, for $0\leq k\leq n$, we denote $\chi_{k}(t)=\chi_{n-k,n-k+1}(t)$. 

Next, we shall define a temporal localized downscale operator $\tilde{F}_{loc}=(\tilde{F}_{loc,1},\tilde{F}_{loc,2})$.
\begin{definition}
The temporal localized downscale operator $\tilde{F}_{loc}^{(n)}:\mathbb{R}^{N}\rightarrow V_{0}(t_{n}^{-},t_{n+1};\Omega)\times V_{aux}(t_{n}^{-},t_{n+1};\Omega)$ 
are defined by $U\mapsto (\tilde{F}_{loc,1}^{(n)},\tilde{F}_{loc,2}^{(n)})$ and
\begin{align*}
d(t_{n}^{-},t_{n+1};\Omega;\tilde{F}_{loc,1}^{(n)}(U),w)-s(\tilde{F}_{loc,2}^{(n)}(U),w) & =0,&\forall w\in V_{d}(t_{n}^{-},t_{n+1};\Omega),\\
s(\tilde{F}_{loc,1}^{(n)}(U),\psi_{j}^{(m,l)}) & =U_{j}^{(m,l)},&\forall\psi_{j}^{(m,l)}\in V_{aux}(t_{n}^{-},t_{n+1};\Omega).
\end{align*}
\end{definition}

We  prove in the following lemma that the global downscale operator has a decay property with respect to the temporal oversampling layers. This also implies that the global multiscale basis functions has a decay property with respect to the temporal oversampling layers.
\begin{lem}
\label{lemma:decay_time}
Let $M$ be the number of temporal oversampling layers. For any space-time element $K=K^{(n,i)}$, $t_{n}^{-}=t_{n-M}$ and $U\in \mathbb{R}^N$, we have
\[
\|F_{1}(U)-\tilde{F}_{loc,1}^{(n)}(U)\|_{V(t_{n},t_{n+1})}\lesssim (1+\tilde{E}^{-1})^{1-M}\Big(\|F_{1}(U)\|_{V(t_{n}^{-},t_{n-M+1})}^{2}+\|F_{1}(U)\|_{s(t_{n}^{-},t_{n-M+1})}^{2}\Big),
\]
where  $\tilde{E}=\cfrac{C_{0}}{2}\Big(\Big(1+\frac{1}{\Delta t\min\{\tilde{\kappa}\}}\Big)+\sigma^{2}\Big)\Big(1+(\frac{\Delta t}{H^{2}})^{\frac{1}{2}}\Big)$.
\end{lem}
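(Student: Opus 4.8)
The plan is to establish the exponential decay of $F_1(U) - \tilde F_{loc,1}^{(n)}(U)$ in the temporal oversampling parameter $M$ by a standard Caccioppoli-type iteration argument, using the temporal cut-off functions $\chi_k(t)$ as the localization device. First I would denote the error $\eta := F_1(U) - \tilde F_{loc,1}^{(n)}(U)$ and $\zeta := F_2(U) - \tilde F_{loc,2}^{(n)}(U)$, and subtract the two defining systems (the global one for $F$ and the temporally localized one for $\tilde F_{loc}^{(n)}$) on the strip $(t_n^-, t_{n+1}) \times \Omega$. Because both satisfy the same constraint $s(\cdot, \psi_j^{(m,l)}) = U_j^{(m,l)}$ on auxiliary functions supported in the strip, we get $s(\eta, \psi_j^{(m,l)}) = 0$ for all such $\psi$, i.e.\ $\pi(\eta) = 0$ on the strip, and $d(t_n^-, t_{n+1}; \Omega; \eta, w) = s(\zeta, w)$ for all $w \in V_d(t_n^-, t_{n+1}; \Omega)$.

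Next I would test this identity with a carefully chosen $w$ built from $\eta$ and a power of the cut-off function — something like $w = \chi_{M-1}^{2}\, \eta$ or, more precisely in the time-stepping spirit of the $e$-bilinear form, a piecewise-in-time modification so that $w$ lands in $V_d$. The left side produces the energy $d$ of $\eta$ weighted by $\chi^2$ over the ``interior'' time layers, plus commutator terms of the form $\int \tilde\kappa^{-1}\partial_t\eta \,\partial_t(\chi^2)\,\eta$ and $\int (\partial_t\eta)\chi^2\eta$ coming from the $e$- and $b$-parts; the key point is that $\partial_t\chi_{M-1}$ is supported only on one temporal layer and is of size $(\Delta t)^{-1}$, which is exactly where the factor $\big(1 + \frac{1}{\Delta t\min\{\tilde\kappa\}}\big)$ in $\tilde E$ enters. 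The right side $s(\zeta, \chi^2\eta)$ is handled by writing $\chi^2\eta - \pi(\chi^2\eta)$ (legal since $\pi(\eta)=0$ will kill the leading term after using the projection), bounding it via \eqref{Assumption1} by $C_0(1 + (\Delta t/H^2)^{1/2})\|\chi^2\eta\|_V$, and then invoking Lemma \ref{lemma2} to control $\|\zeta\|_s$ by $\sigma\|\eta\|_V$ on the relevant layer — this is where the $\sigma^2$ term in $\tilde E$ comes from. Collecting these, one derives a recursive inequality of the shape $a_{M} \le \tilde E\,(a_{M-1} - a_{M})$, where $a_k$ is the energy of $\eta$ on the last $k$ temporal layers $(t_n^-, t_{n-k+1})$; equivalently $a_M \le \frac{\tilde E}{1+\tilde E} a_{M-1}$, and iterating down to layer $1$ gives the claimed factor $(1+\tilde E^{-1})^{1-M}$ times the energy of $F_1(U)$ on the outermost layer $(t_n^-, t_{n-M+1})$, using that $\tilde F_{loc,1}^{(n)}$ vanishes there by the $V_0$-boundary condition at $t_n^-$.

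The main obstacle I anticipate is the bookkeeping around the $e$-bilinear form: because $e$ is a \emph{sum over fine time subintervals} of $\int \tilde\kappa^{-1}\partial_t v\,\partial_t w$ rather than a global integral, the test function $\chi_{M-1}^2\eta$ must be chosen so that its restriction to each $(t_k,t_{k+1})$ genuinely lies in $H^1(t_k,t_{k+1};L^2)$ — i.e.\ the cut-off must be adapted to the coarse temporal mesh, which it is — and one must carefully track that the commutator $\partial_t(\chi^2)$ terms only live on the single transition layer so that no contribution is lost in the ``already-decayed'' region. A secondary subtlety is ensuring the chosen $w$ is admissible in $V_d$ (the correct boundary behavior in space and the correct regularity in time) after multiplication by $\chi^2$, and correctly absorbing the $\|\chi^2\eta\|_V$ factors back into $a_{M-1} - a_M$ using $|\nabla\chi_i|$-type bounds together with $\chi_k \in [0,1]$. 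Once the one-step recursion $a_M \le \tilde E(a_{M-1}-a_M)$ is in hand, the geometric iteration and the passage to the stated norm on $(t_n,t_{n+1}) \subset (t_{n-M+1},t_{n+1})$ are routine.
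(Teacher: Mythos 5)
Your proposal follows essentially the same route as the paper's proof: subtract the two downscale systems to get a homogeneous local problem for $\tilde\eta_1$ with $\pi(\tilde\eta_1)=0$, test with a temporal cut-off times $\tilde\eta_1$ (the paper uses $\chi_k\tilde\eta_1$ rather than $\chi_k^2\tilde\eta_1$), use integration by parts in time for the $b$-term to produce the $(\Delta t\min\{\tilde\kappa\})^{-1}$ factor and Lemma \ref{lemma2} for the $\sigma^2$ factor, iterate the resulting one-layer inequality geometrically, and finally control the total energy by the data on the outermost layer via the vanishing of $\tilde F_{loc,1}^{(n)}$ at $t_n^-$. Apart from a slip in how you index the nested time intervals in the recursion (the decaying quantity is the energy on $(t_{n-k+1},t_{n+1})$, anchored at $t_{n+1}$, not at $t_n^-$), the argument and all key ingredients match the paper.
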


\begin{proof}
First, since $U\in \mathbb{R}^N$ satisfies the following equalities:
\begin{align*}
d(0,T;\Omega;F_{1}(U),w)-s(F_{2}(U),w) & =0,&\forall w\in V_{d},\\
s(F_{1}(U),\psi_{j}^{(m,l)}) & =U_{j}^{(m,l)},&\forall\psi_{j}^{(m,l)}\in V_{aux},
\end{align*}
and $V_{d}(t_{n-M},t_{n+1})\subset V_{d}$, we
have
\begin{align*}
d(t_{n-M},t_{n+1};\Omega;F_{1}(U),w)-s(F_{2}(U),w) & =0,&\forall w\in V_{d}(t_{n-M},t_{n+1}),\\
s(F_{1}(U),\psi_{j}^{(m,l)}) & =U_{j}^{(m,l)},&\forall\psi_{j}^{(m,l)}\in V_{aux}(t_{n-M},t_{n+1}).
\end{align*}

We define $\tilde{\eta}:=F(U)-\tilde{F}_{loc}^{(n)}(U)$, $\tilde{\eta_1}:=F_1(U)-\tilde{F}_{loc,1}^{(n)}(U)$ 
and $\tilde{\eta_2}:=F_2(U)-\tilde{F}_{loc,2}^{(n)}(U)$. Then the following equalities hold:
\begin{align*}
d(t_{n-M},t_{n+1};\Omega;\tilde{\eta}_{1},w)-s(\tilde{\eta}_{2},w) & =0, &\forall w\in V_{d}(t_{n-M},t_{n+1}),\\
s(\tilde{\eta}_{1},\psi_{j}^{(m,l)}) & =0,&\forall\psi_{j}^{(m,l)}\in V_{aux}(t_{n-M},t_{n+1}).
\end{align*}
We will estimate $\|\tilde{\eta}\|_{V(t_{n},t_{n+1};\Omega)}^{2}$
in three steps.\\

\textbf{Step 1}: We will prove
\begin{equation}
\|\tilde{\eta}_{1}\|_{V(t_{n-k+1},t_{n+1})}^{2}\leq\tilde{E}\|\tilde{\eta}_{1}\|_{V(t_{n-k},t_{n-k+1})}^{2}\;\text{for }1\leq k\leq M-1.
\label{ineq:step1}
\end{equation}
Let $w=\chi_{k}\tilde{\eta}_{1}$, for $k\leq M-1$. Since $\tilde{\eta}_{1}\in V_0$ and $1-\chi_{k}(t)=0$ if $t\geq t_{n-k+1}$, then $w\in V_{d}(t_{n-M},t_{n+1})$.
Then we have
\begin{equation}
 \begin{aligned}
d(t_{n-M},t_{n+1};\Omega;\tilde{\eta}_{1},\chi_{k}\tilde{\eta}_{1})-s(\tilde{\eta}_{2},\chi_{k}\tilde{\eta}_{1}) & =0,\\
s(\tilde{\eta}_{1},\psi_{j}^{(m,l)}) & =0,~~~~~~~~~\;\forall\psi_{j}^{(m,l)}\in V_{aux}(t_{n-k},t_{n+1}).
\end{aligned}  
\label{ineq:step1_2}
\end{equation}
Notice that
\begin{align*}
\int_{t_{n-k}}^{t_{n+1}}\int_{\Omega}(\tilde{\eta}_{1})_{t}\chi_{k}\tilde{\eta}_{1} & =-\int_{t_{n-k}}^{t_{n+1}}\int_{\Omega}(\chi_{k}\tilde{\eta}_{1})_{t}\tilde{\eta}_{1}+\int_{\Omega}\tilde{\eta}_{1}^{2}(t_{n+1},\cdot)\\
 & =-\int_{t_{n-k}}^{t_{n+1}}\int_{\Omega}(\tilde{\eta}_{1})_{t}\chi_{k}\tilde{\eta}_{1}-\cfrac{1}{\Delta t}\int_{t_{n-k}}^{t_{n-k+1}}\int_{\Omega}\tilde{\eta}_{1}^{2}+\int_{\Omega}\tilde{\eta}_{1}^{2}(t_{n+1},\cdot). 
\end{align*}
This gives 
\begin{equation}
    \int_{t_{n-k}}^{t_{n+1}}\int_{\Omega}(\tilde{\eta}_{1})_{t}\chi_{k}\tilde{\eta}_{1}= -\cfrac{1}{2\Delta t}\int_{t_{n-k}}^{t_{n-k+1}}\int_{\Omega}\tilde{\eta}_{1}^{2}+\cfrac{1}{2} \int_{\Omega}\tilde{\eta}_{1}^{2}(t_{n+1},\cdot). 
    \label{ineq:step1_3}
\end{equation}
Combining \eqref{ineq:step1_2} and \eqref{ineq:step1_3}, we arrive at the following estimate:
 \begin{equation}
\begin{aligned}
\|\eta_{1}\|_{V(t_{n-k+1},t_{n+1};\Omega)}^{2}+\cfrac{1}{2}\int_{\Omega}\tilde{\eta}_{1}^{2}(t_{n+1},\cdot) & \leq\cfrac{1}{2\Delta t}\int_{t_{n-k}}^{t_{n-k+1}}\int_{\Omega}\tilde{\eta}_{1}^{2}+s(\tilde{\eta}_{2},\chi_{k}\tilde{\eta}_{1})\\
 & =\cfrac{1}{2\Delta t}\int_{t_{n-k}}^{t_{n-k+1}}\int_{\Omega}\tilde{\eta}_{1}^{2}+\int_{t_{n-k}}^{t_{n-k+1}}\int_{\Omega}\tilde{\kappa}\chi_{k}\tilde{\eta}_{2}\tilde{\eta}_{1}.
\end{aligned}
\label{ineq:step1_4}
 \end{equation}


Utilizing \eqref{ineq:step1_4} and Cauchy-Schwarz inequality, one can show that
\[
\|\tilde{\eta}_{1}\|_{V((t_{n-k+1},t_{n+1})}^{2}\leq\cfrac{1}{2}\Big((1+\frac{1}{\Delta t\min\{\tilde{\kappa}\}})\|\tilde{\eta}_{1}\|_{s((t_{n-k},t_{n-k+1})}^{2}+\|\tilde{\eta}_{2}^{2}\|_{s((t_{n-k},t_{n-k+1})}\Big).
\]
Since $d(t_{n-k},t_{n-k+1};\Omega;\tilde{\eta}_{1},w)=s(\tilde{\eta}_{2},w)$ for any $ w\in V_{d}(t_{n-k},t_{n-k+1})$,
it follows from Lemma \ref{lemma2} that
\[
\|\tilde{\eta}_{2}^{2}\|_{s(t_{n-k},t_{n-k+1})}\leq\sigma^{2}\|\tilde{\eta}_{1}\|_{s(t_{n-k},t_{n-k+1})}^{2}.
\]
Therefore, we have
\[
\|\tilde{\eta}_{1}\|_{V(t_{n-k+1},t_{n+1})}^{2}\leq\tilde{E}\|\tilde{\eta}_{1}\|_{V(t_{n-k},t_{n-k+1})}^{2},
\]
where $\tilde{E}=\cfrac{C_{0}}{2}\Big(\Big(1+\frac{1}{\Delta t\min\{\tilde{\kappa}\}}\Big)+\sigma^{2}\Big)\Big(1+(\frac{\Delta t}{H^{2}})^{\frac{1}{2}}\Big)$.\\

\textbf{Step 2}: We will prove
\begin{equation}
  \|\tilde{\eta}_{1}\|_{V(t_{n},t_{n+1})}^{2}\leq(1+\tilde{E}^{-1})^{1-M}\|\tilde{\eta}_{1}\|_{(t_{n-M+1},t_{n+1})}^{2}.  
  \label{lemma_temp_decay_step2}
\end{equation}

Using Inequality \eqref{ineq:step1}, we have the following estimate: for $1\leq k\leq M-1$, 
\begin{align*}
\|\tilde{\eta}_{1}\|_{V(t_{n-k},t_{n+1})}^{2} & =\|\tilde{\eta}_{1}\|_{V(t_{n-k+1},t_{n+1})}^{2}+\|\tilde{\eta}_{1}\|_{V(t_{n-k},t_{n-k+1})}^{2}\\
 & \geq(1+\tilde{E}^{-1})\|\tilde{\eta}_{1}\|_{V(t_{n-k+1},t_{n+1})}^{2},
\end{align*}
Using the above inequality recursively,  we obtain \eqref{lemma_temp_decay_step2}.\\

\textbf{Step 3}: We will prove
\[
\|\tilde{\eta}_{1}\|_{V(t_{n-M+1},t_{n+1})}^{2}\leq\Big(1+\cfrac{1}{\Delta t\min\{\tilde{\kappa}\}}\Big)\|F_{1}(U)\|_{s(t_{n-M},t_{n-M+1})}^{2}+\|F_{1}(U)\|_{V(t_{n-M},t_{n-M+1})}^{2}.
\]

Since $\tilde{\eta}_{1}\in V_{d}(t_{n-M},t_{n+1})$, the following equalities hold true:
\begin{align*}
d(t_{n-M},t_{n+1};\Omega;\tilde{\eta}_{1},\tilde{\eta}_{1}) & =d(t_{n-M},t_{n+1};\Omega;\tilde{\eta}_{1},\tilde{\eta}_{1})-s(\tilde{\eta}_{2},\tilde{\eta}_{1})\\
 & =0.
\end{align*}
Using a similar derivation to obtain \eqref{ineq:step1_3}, one can also show that
\begin{align*}
\cfrac{1}{2}\Big(\int_{\Omega}\tilde{\eta}_{1}^{2}(t_{n+1},\cdot)-\int_{\Omega}\tilde{\eta}_{1}^{2}(t_{n-M},\cdot)\Big)+\|\tilde{\eta}_{1}\|_{V(t_{n-M},t_{n+1})}^{2} & =d(t_{n-M},t_{n+1};\Omega;\tilde{\eta}_{1},\tilde{\eta}_{1})\\
 & =0.
\end{align*}
Notice that
\begin{equation}
  \int_\Omega \tilde{\eta}_{1}^{2}(t_{n-M},\cdot)=\int_\Omega F_{1}^{2}(t_{n-M},\cdot) = -2\int_{t_{n-M}}^{t_{n-M+1}}\int_{\Omega}\partial_{t}((1-\chi_{M})F_{1}(U))(1-\chi_{M})F_{1}(U). 
  \label{ineq:step3_1}
\end{equation}
Utilizing \eqref{ineq:step3_1}, we have
\begin{align*}
\|\tilde{\eta}_{1}\|_{V(t_{n-M},t_{n+1})}^{2} & \leq\cfrac{1}{2}\int_{\Omega}\tilde{\eta}_{1}^{2}(t_{n-M},\cdot)\\
&=-\int_{t_{n-M}}^{t_{n-M+1}}\int_{\Omega}\partial_{t}((1-\chi_{M})F_{1}(U))(1-\chi_{M})F_{1}(U)\\
 & \leq\cfrac{1}{\Delta t\min\{\tilde{\kappa}\}}\|F_{1}(U)\|_{s(t_{n-M},t_{n-M+1})}^{2}+\|F_{1}(U)\|_{V(t_{n-M},t_{n-M+1})}\|F_{1}(U)\|_{s(t_{n-M},t_{n-M+1})}\\
 & \leq\Big(1+\cfrac{1}{\Delta t\min\{\tilde{\kappa}\}}\Big)\|F_{1}(U)\|_{s(t_{n-M},t_{n-M+1})}^{2}+\|F_{1}(U)\|_{V(t_{n-M},t_{n-M+1})}^{2}.
\end{align*}
The proof is completed using $\|\tilde{\eta}_{1}\|_{V(t_{n-M},t_{n+1})}^{2}\geq \|\tilde{\eta}_{1}\|_{V(t_{n-M+1},t_{n+1})}^{2}$ together with Step 2.
\end{proof}
Define the constant 
\[
C_\kappa=\text{sup}_{v\in V} \cfrac{\|v\|_s }{\|v\|_V}.
\]
We now prove in the following lemma that local downscale operators have a decay property with respect to the spatial oversampling layers.
\begin{lem}
\label{lemma:decay_space}
Let $N_s$ be the number of the oversampling layers in space.
For any coarse space element $K=K^i\in \mathcal{T}_{H}$ and time element $[t_n,t_{n+1}]\in \mathcal{T}_{\Delta t}$, we have
\[
\|\tilde{F}_{loc,1}^{(n)}(U)-F_{loc,1}^{(n,i)}(U)\|_{V(t_{n}^{-},t_{n+1};K)}^{2}\leq C_{\chi}E^{1-N_s}\Big(\|\tilde{F}_{loc,1}^{(n)}(U)\|_{V(t_{n}^{-},t_{n+1};K_{N}\backslash K_{N-1})}^{2}+\|\tilde{F}_{loc,1}^{(n)}(U)\|_{s(t_{n}^{-},t_{n+1};K_{N}\backslash K_{N-1})}^{2}\Big),
\]
where $E:=1+\cfrac{1}{2C_{\chi}^2+\sigma C_\kappa}$.
\end{lem}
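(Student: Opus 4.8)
The plan is to mirror, in the spatial variable, the three-step argument used for the temporal decay in Lemma~\ref{lemma:decay_time}. Let $\eta_1 := \tilde{F}_{loc,1}^{(n)}(U) - F_{loc,1}^{(n,i)}(U)$ and $\eta_2 := \tilde{F}_{loc,2}^{(n)}(U) - F_{loc,2}^{(n,i)}(U)$ on the oversampled region $(t_n^-,t_{n+1}]\times K^i_{N_s}$. Since both $\tilde{F}_{loc}^{(n)}(U)$ and $F_{loc}^{(n,i)}(U)$ satisfy the same downscale relations (the first on a larger domain, so its equations restrict to $K^i_{N_s}$), the pair $(\eta_1,\eta_2)$ satisfies the homogeneous system
\[
d(t_n^-,t_{n+1};K^i_{N_s};\eta_1,w) - s(\eta_2,w) = 0 \quad \forall w \in V_d(t_n^-,t_{n+1};K^i_{N_s}),
\]
together with $s(\eta_1,\psi_j^{(m,l)}) = 0$ for all auxiliary functions supported in $K^i_{N_s}$. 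The spatial cut-off function $\chi^s_{k,k+1}$ plays the role that $\chi_k(t)$ played before, with its gradient bound (property (c) in the Definition) replacing the factor $1/\Delta t$ that arose from differentiating $\chi_k(t)$.

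\textbf{Step 1} establishes an iteration inequality of the form $\|\eta_1\|_{V(t_n^-,t_{n+1};K^i_{k})}^2 \leq E \, \|\eta_1\|_{V(t_n^-,t_{n+1};K^i_{k+1}\backslash K^i_{k})}^2$ for $1 \leq k \leq N_s-1$. I would test the homogeneous system against $w = (1-\chi^s_{k,k+1})^2 \eta_1$ (or $\chi^s$-type weight adapted to the annulus), expand $\kappa\nabla\eta_1\cdot\nabla w$ using the product rule, absorb the cross terms $\kappa \eta_1 \nabla\eta_1 \cdot \nabla\chi^s$ via Cauchy--Schwarz, and use property (c) $|\nabla\chi^s_{k,m}|^2 \leq C_\chi \sum_i |\nabla\chi_i|^2 = C_\chi \tilde\kappa/\kappa$ to convert the gradient-of-cutoff term into an $s$-norm contribution. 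The time-derivative term is handled as in \eqref{ineq:step1_3}, noting that $\chi^s$ is time-independent so $\partial_t(\chi^s\eta_1) = \chi^s\partial_t\eta_1$ and the boundary term at $t_{n+1}$ has a favorable sign. Then Lemma~\ref{lemma2} (applied on each fine time slab) bounds $\|\eta_2\|_s$ by $\sigma\|\eta_1\|_V$, and the definition of $C_\kappa$ converts $\|\eta_1\|_s$ on the annulus back into $\|\eta_1\|_V$, yielding the constant $E = 1 + (2C_\chi^2 + \sigma C_\kappa)^{-1}$.

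\textbf{Step 2} converts the one-step inequality into geometric decay: writing $\|\eta_1\|_{V(\cdots;K^i_{k+1})}^2 = \|\eta_1\|_{V(\cdots;K^i_k)}^2 + \|\eta_1\|_{V(\cdots;K^i_{k+1}\backslash K^i_k)}^2 \geq (1+E^{-1})\|\eta_1\|_{V(\cdots;K^i_k)}^2$ and iterating from $k=0$ up to $k = N_s-1$ gives $\|\eta_1\|_{V(t_n^-,t_{n+1};K^i)}^2 \leq (1+E^{-1})^{1-N_s}\|\eta_1\|_{V(t_n^-,t_{n+1};K^i_{N_s-1})}^2$, and a final comparison with $\tilde{F}_{loc,1}^{(n)}(U)$ on the outermost annulus $K_N\backslash K_{N-1}$ (as in Step 3 of Lemma~\ref{lemma:decay_time}, using a cut-off equal to $1$ inside $K_{N-1}$) produces the stated right-hand side with the factor $C_\chi E^{1-N_s}$. \textbf{The main obstacle} I anticipate is Step~1: getting the cross term from the product rule to be absorbed cleanly so that the resulting constant is exactly $E = 1 + (2C_\chi^2 + \sigma C_\kappa)^{-1}$ rather than something weaker. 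This requires choosing the right power of the cut-off (squaring it so that $\nabla w$ produces a factor $2(1-\chi^s)\nabla(1-\chi^s)\eta_1 + (1-\chi^s)^2\nabla\eta_1$), carefully tracking which terms live on the annulus versus the interior, and balancing the Cauchy--Schwarz weights so the $\|\eta_1\|_V^2$ interior term on the left is not consumed. A secondary technical point is that $V_d$ only requires piecewise-in-time $H^1$ regularity of the test function, so one must check the spatially-cut-off function still lies in $V_d(t_n^-,t_{n+1};K^i_{N_s})$ — which it does, since multiplying by a fixed smooth spatial function preserves all the time-regularity and the homogeneous spatial boundary condition on $\partial K^i_{N_s}$ is inherited from $\chi^s$ vanishing on $K_m$.
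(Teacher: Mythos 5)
Your plan reproduces the paper's argument for this lemma essentially verbatim: the homogeneous system satisfied by the difference on the oversampled block, a Caccioppoli-type iteration with spatial cut-offs using property (c), Lemma \ref{lemma2} and $C_\kappa$ to convert $\|\eta_2\|_s$ and $\|\eta_1\|_s$ on each annulus into $\|\eta_1\|_V$, telescoping to geometric decay, and a final comparison with $\tilde{F}_{loc,1}^{(n)}(U)$ on the outermost annulus. The only points to tidy are cosmetic: the test weight should equal $1$ on the inner region $K_k$ and $0$ outside $K_{k+1}$ (you wrote its complement), and the paper uses the linear weight $\chi^s\eta_1$ rather than the squared one, which is what produces the per-layer factor $E^{-1}=\big(1+(2C_\chi^2+\sigma C_\kappa)^{-1}\big)^{-1}$ and hence the stated rate $E^{1-N_s}$ (your $(1+E^{-1})^{1-N_s}$ matches the form used later in Theorem \ref{thm:main_sp} rather than the lemma as stated).
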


\begin{proof}
Notice that $V_{d}(t_{n}^{-},t_{n+1};K_{+}^{i})\subset V_{d}(t_{n}^{-},t_{n+1};\Omega) $. It follows from the definitions of $ \tilde{F}_{loc}$ and ${F}_{loc}^{(n,i)}$ that   the following equalities hold true:
\begin{equation}
    \begin{aligned}
d(t_{n}^{-},t_{n+1};K_{+}^{i};F_{loc,1}^{(n,i)}(U)-\tilde{F}_{loc,1}^{(n)}(U),v)+s(F_{loc,2}^{(n,i)}(U)-\tilde{F}_{loc,2}^{(n)}(U),v) & =0,&\forall v\in V_{d}(t_{n}^{-},t_{n+1};K_{+}^{i}),\\
s(F_{loc,1}^{(n,i)}(U)-\tilde{F}_{loc,1}^{(n)}(U),\psi_{j}^{(n,i)}) & =0,&\forall\psi_{j}^{(n,i)}\in V_{aux}(t_{n}^{-},t_{n+1};K_{+}^{i}).
\end{aligned}
\label{lemma_space:system1}
\end{equation}

In this proof, we denote $\|\cdot\|_{V(\omega)}$, $\|\cdot\|_{W(\omega)}$
and $\|\cdot\|_{s(\omega)}$ by
\[
\|v\|_{V(\omega)}:=\|v\|_{V(t_{n}^{-},t_{n+1};\omega)},\;\|v\|_{W(\omega)}:=\|v\|_{W(t_{n}^{-},t_{n+1};\omega)}\;,\|v\|_{s(\omega)}:=\|v\|_{s(t_{n}^{-},t_{n+1};\omega)}.
\]
We then define $\eta=F_{loc}^{(n,i)}(U)-\tilde{F}_{loc}^{(n)}(U)$ and  $\eta_{j}=F_{loc,j}^{(n,i)}(U)-\tilde{F}_{loc,j}^{(n)}(U)$ for $j=1,2$.
For $k=1,2,\cdots, N_s$, we denote $\chi_{k,k-1}:=1-\chi_{k-1,k}$. Then we have
\begin{equation}
    \begin{aligned}
 & \|\eta_{1}(t_{n+1})\|_{L^{2}(K_{k-1}^{})}^{2}+\|\eta_{1}\|_{V(K_{k-1}^{})}^{2}\\
\leq & d(t_{n}^{-},t_{n+1};K_{k}^{};\eta_{1},\chi_{k,k-1}\eta_{1})-a(t_{n}^{-},t_{n+1};K_{k}^{}\backslash K_{k-1}^{};\eta_{1},\chi_{k,k-1}\eta_{1}).
\end{aligned}
\label{lemma_space:ineq1}
\end{equation}
Notice that $\pi(\chi_{k,k-1}\eta_{1})|_{K_{k-1}^{}}=\pi(\eta_{1})|_{K_{k-1}^{}}=0$. Choosing $v=\chi_{k,k-1}\eta_{1}$ in \eqref{lemma_space:system1} and utilizing Cauchy-Schwartz Inequality, we have
\begin{align*}
d(t_{n}^{-},t_{n+1};K_{k}^{};\eta_{1},\chi_{k,k-1}\eta_{1}) & =-s(\eta_{2},\chi_{k,k-1}\eta_{1})\\
 & \leq\|\eta_{2}\|_{s(K_{k}^{}\backslash K_{k-1}^{})}\|\chi_{k,k-1}\eta_{1}\|_{s(K_{k}^{}\backslash K_{k-1}^{})}\\
 & \leq\|\eta_{2}\|_{s(K_{k}^{}\backslash K_{k-1}^{})}\|\eta_{1}\|_{s(K_{k}^{}\backslash K_{k-1}^{})}.
\end{align*}
It follows from Lemma  \ref{lemma2} that we have
\begin{align*}
\|\eta_{2}\|_{s(K_{k}^{}\backslash K_{k-1}^{})} & \leq\sigma\|\eta_{1}\|_{V(K_{k}^{}\backslash K_{k-1}^{})}.
\end{align*}
Moreover, we have $\|\eta_{1}\|_{s(K_{k}^{}\backslash K_{k-1}^{})}\leq C_{\kappa} \|\eta_{1}\|_{V(K_{k}^{}\backslash K_{k-1}^{})} $.
Therefore 
\begin{equation}
  d(t_{n}^{-},t_{n+1};K_{k}^{};\eta_{1},\chi_{k,k-1}\eta_{1})\leq \sigma C_{\kappa}  \|\eta_{1}\|^2_{V(K_{k}^{}\backslash K_{k-1}^{})}.
\label{lemma_space:ineq2}  
\end{equation}
Since $\nabla(\chi_{k,k-1}\eta_{1})=\eta_{1}\nabla(\chi_{k,k-1})+\chi_{k,k-1}\nabla(\eta_{1})$
and $|\nabla\chi_{k,k-1}|^{2}\leq C_{\chi}\sum_{i}|\nabla\chi_{i}|^{2}$,
we have
\begin{align*}
\int_{t_{n}^{-}}^{t_{n+1}}\int_{K_{k}^{}\backslash K_{k-1}^{}}\kappa\nabla\eta_{1}\cdot\nabla\Big(\chi_{k,k-1}\eta_{1}\Big) & \leq\|\eta_{1}\|_{W(K_{k}^{}\backslash K_{k-1}^{})}\|\chi_{k,k-1}\eta_{1}\|_{W(K_{k}^{}\backslash K_{k-1}^{})}\\
 & \leq C_{\chi}\|\eta_{1}\|_{W(K_{k}^{}\backslash K_{k-1}^{})}\Big(\|\eta_{1}\|_{W(K_{k}^{}\backslash K_{k-1}^{})}+\|\eta_{1}\|_{s(K_{k}^{}\backslash K_{k-1}^{})}\Big)\\
 & \leq C_{\chi}\|\eta_{1}\|_{W(K_{k}^{}\backslash K_{k-1}^{})}\Big(\|\eta_{1}\|_{W(K_{k}^{}\backslash K_{k-1}^{})}+\|\eta_{1}\|_{V(K_{k}^{}\backslash K_{k-1}^{})}\Big).
\end{align*}
Using $\|\eta_{1}\|_{W(K_{k}^{}\backslash K_{k-1}^{})}\leq \|\eta_{1}\|_{V(K_{k}^{}\backslash K_{k-1}^{})}$, we obtain
\begin{equation}
    \int_{t_{n}^{-}}^{t_{n+1}}\int_{K_{k}^{}\backslash K_{k-1}^{}}\kappa\nabla\eta_{1}\cdot\nabla\Big(\chi_{k,k-1}\eta_{1}\Big)\leq 2C_{\chi}\|\eta_{1}\|^2_{V(K_{k}^{}\backslash K_{k-1}^{})}.
    \label{lemma_space:ineq3}
\end{equation}
A combination of \eqref{lemma_space:ineq1}, \eqref{lemma_space:ineq2} and \eqref{lemma_space:ineq3}, we arrive at
\begin{align*}
\|\eta_{1}(t_{n+1})\|_{L^{2}(K_{k-1}^{})}^{2}+\|\eta_{1}\|_{V(K_{k-1}^{})}^{2} 
 & \leq (2C_{\chi}^2+\sigma C_\kappa)\|\eta_{1}\|_{V(K_{k}^{}\backslash K_{k-1}^{})}^{2}\\
 & =(2C_{\chi}^2+\sigma C_\kappa)\Big(\|\eta_{1}\|_{V(K_{k}^{})}^{2}-\|\eta_{1}\|_{V(K_{k-1}^{})}^{2}\Big),
\end{align*}
which gives

\[
\|\eta_{1}\|_{V(K_{k-1}^{})}^{2}\leq\Big(1+\cfrac{1}{2C_{\chi}^2+\sigma C_\kappa}\Big)^{-1}\|\eta_{1}\|_{V(K_{k}^{})}^{2}, \text{ for } 1\leq k\leq N_s-1.
\]
Denote $E:=1+\cfrac{1}{2C_{\chi}^2+\sigma C_\kappa}$. Using above Inequality recursively, we obtain
\[
\|\eta_{1}\|_{V(K)}^{2}\leq E^{1-N_s}\|\eta_{1}\|_{V(K_{N_s-1}^{})}^{2}.
\]
It remains to estimate $ \|\eta_{1}\|_{V(K_{N_s-1}^{})}^{2}$.
We shall prove:
\[
\|\eta_{1}\|_{V(K_{N_s-1}^{})}^{2}\leq C_{\chi}\Big(\|\tilde{F}_{loc,1}^{(n)}(U)\|_{V(t_{n}^{-},t_{n+1};K_{N_s}\backslash K_{N_s-1})}^{2}+\|\tilde{F}_{loc,1}^{(n)}(U)\|_{s(t_{n}^{-},t_{n+1};K_{N_s}\backslash K_{N_s-1})}^{2}\Big).
\]
Notice that
\begin{equation}
\begin{aligned}
&\cfrac{1}{2}\|\eta_{1}(t_{n+1})\|^{2}+\|\eta_{1}\|_{V(K_{N_s})}^{2}\\
 =& d(t_{n}^{-},t_{n+1};K_{N_s}^{};\eta_{1},\eta_{1})\\
 =& d(t_{n}^{-},t_{n+1};K_{N_s}^{};\eta_{1},F_{loc,1}^{(n,i)}(U)-\chi_{N_{s},N_{s}-1}\tilde{F}_{loc,1}^{(n)}(U))\\
  &~~~~~+d(t_{n}^{-},t_{n+1};K_{N_s}^{};\eta_{1},(\chi_{N_{s},N_{s}-1}-1)\tilde{F}_{loc,1}^{(n)}(U)).
\end{aligned} 
\label{lemma5:2terms}
\end{equation}
We next estimate each of the above two terms. Choosing $v=F_{loc,1}^{(n,i)}(U)-\chi_{N_{s},N_{s}-1}\tilde{F}_{loc,1}^{(n)}(U)$ in \eqref{lemma_space:system1} and using Cauchy-Schwartz Inequality, we have the following  estimate:
\begin{align*}
 & d(t_{n}^{-},t_{n+1};K_{N_s}^{};\eta_{1},F_{loc,1}^{(n,i)}(U)-\chi_{N_{s},N_{s}-1}\tilde{F}_{loc,1}^{(n)}(U))\\
= & -s(\eta_{2},F_{loc,1}^{(n,i)}(U)-\chi_{N_{s},N_{s}-1}\tilde{F}_{loc,1}^{(n)}(U))\\
= & -s(\eta_{2},(1-\chi_{N_{s},N_{s}-1})\tilde{F}_{loc,1}^{(n)}(U))\\
\leq & \|\eta_{2}\|_{s(K_{N_{s}}^{}\backslash K_{N_{s}-1}^{})}\|\tilde{F}_{loc,1}^{(n)}(U)\|_{s(K_{N_{s}}^{}\backslash K_{N_{s}-1}^{})}.
\end{align*}
Furthermore, since $\|\eta_{2}\|_{s(K_{N_{s}}^{}\backslash K_{N_{s}-1}^{})}\leq\sigma\|\eta_{1}\|_{V(K_{N_{s}}^{}\backslash K_{N_{s}-1}^{})}$,
we have the following estimate:
\begin{equation}
 \begin{aligned}
 & d(t_{n}^{-},t_{n+1};K_{N_s}^{};\eta_{1},F_{loc,1}^{(n,i)}(U)-\chi_{N_{s},N_{s}-1}\tilde{F}_{loc,1}^{(n)}(U))\\
\leq & \sigma\|\eta_{1}\|_{V(K_{N_{s}}^{}\backslash K_{N_{s}-1}^{})}\|\tilde{F}_{loc,1}^{(n)}(U)\|_{s(K_{N_{s}}^{}\backslash K_{N_{s}-1}^{})}.
\end{aligned}
\label{lemma5:estimate_1st}
\end{equation}
We also have 
\begin{align*}
 & d(t_{n}^{-},t_{n+1};K_{N_s}^{};\eta_{1},(\chi_{N_{s},N_{s}-1}-1)\tilde{F}_{loc,1}^{(n)}(U))\\
\leq & \|\tilde{\kappa}^{-\frac{1}{2}}\partial_{t}\eta_{1}\|_{L^{2}(K_{N_{s}}^{}\backslash K_{N_{s}-1}^{})}(\|\tilde{F}_{loc,1}^{(n)}(U)\|_{s(K_{N_{s}}^{}\backslash K_{N_{s}-1}^{})}+\|\tilde{\kappa}^{-\frac{1}{2}}\partial_{t}\tilde{F}_{loc,1}^{(n)}(U)\|_{L^{2}(K_{N_{s}}^{}\backslash K_{N_{s}-1}^{})})\\
 & +\|\eta_{1}\|_{W(K_{N_{s}}^{}\backslash K_{N_{s}-1}^{})}\|(\chi_{N_{s},N_{s}-1}-1)\tilde{F}_{loc,1}^{(n)}(U)\|_{W(K_{N_{s}}^{}\backslash K_{N_{s}-1}^{})}.
\end{align*}
Notice that 
\[
\|(\chi_{N_{s},N_{s}-1}-1)\tilde{F}_{loc,1}^{(n)}(U)\|_{W(K_{N_{s}}^{}\backslash K_{N_{s}-1}^{})}\leq C_{\chi}\Big(\|\tilde{F}_{loc,1}^{(n)}(U)\|_{W(K_{N_{s}}^{}\backslash K_{N_{s}-1}^{})}+\|\tilde{F}_{loc,1}^{(n)}(U)\|_{s(K_{N_{s}}^{}\backslash K_{N_{s}-1}^{})}\Big).
\]
We obtain the following estimate: 
\begin{equation}
 \begin{aligned}
&d(t_{n}^{-},t_{n+1};K_{N_s}^{};\eta_{1},(\chi_{N_{s},N_{s}-1}-1)\tilde{F}_{loc,1}^{(n)}(U))  \\
\leq & C_{\chi}\|\eta_{1}\|_{V(K_{N_{s}}^{}\backslash K_{N_{s}-1}^{})}\Big(\|\tilde{F}_{loc,1}^{(n)}(U)\|_{V(K_{N_{s}}^{}\backslash K_{N_{s}-1}^{})}+\|\tilde{F}_{loc,1}^{(n)}(U)\|_{s(K_{N_{s}}^{}\backslash K_{N_{s}-1}^{})}\Big).
\end{aligned}
\label{lemma5:estimate_2ndterm}
\end{equation}
Combing \eqref{lemma5:estimate_1st} and \eqref{lemma5:estimate_2ndterm}, we arrive at 
\[
\cfrac{1}{2}\|\eta_{1}(t_{n+1})\|^{2}+\|\eta_{1}\|_{V(K_{N_s})}^{2}\leq C_{\chi}\|\eta_{1}\|_{V(K_{N_{s}}^{}\backslash K_{N_{s}-1}^{})}\Big(\|\tilde{F}_{loc,1}^{(n)}(U)\|_{V(K_{N_{s}}^{}\backslash K_{N_{s}-1}^{})}+\|\tilde{F}_{loc,1}^{(n)}(U)\|_{s(K_{N_{s}}^{}\backslash K_{N_{s}-1}^{})}\Big).
\]
Therefore, 
\[
\|\eta_{1}\|_{V(K_{N_s})}^{2}\leq C_{\chi}\Big(\|\tilde{F}_{loc,1}^{(n)}(U)\|_{V(K_{N_{s}}^{}\backslash K_{N_{s}-1}^{})}^{2}+\|\tilde{F}_{loc,1}^{(n)}(U)\|_{s(K_{N_{s}}^{}\backslash K_{N_{s}-1}^{})}^{2}\Big).
\]
This completes the proof.
\end{proof}
Finally, we state and prove the main result of this work. It reads as follows.
\begin{theorem} \label{thm:main_sp}
Let $u$ be the solution of \eqref{PDEModel} and $F_{ms,1}(U_{ms})$ be the solution of \eqref{localized_sol} with the numbers of spatial and temporal oversampling layers being $N_s$ and $M$, respectively. 
We have
\begin{eqnarray*}
\|u-F_{ms,1}(U_{ms})\|_{V} & \leq & C_{0}(1+(\frac{\Delta t}{H^{2}})^{\frac{1}{2}})\|\tilde{\kappa}^{-\frac{1}{2}}f\|_{L^{2}}+\|\tilde{\kappa}^{-\frac{1}{2}}\partial_{t}u\|_{L^{2}}\\
 &  & +C\Big((1+\tilde{E}^{-1})^{1-M}+(1+E^{-1})^{1-N}\Big)^{\frac{1}{2}}H^{-\frac{d}{2}}\Big(\|F_{1}(U_{ms})\|_{V}+\|F_{1}(U_{ms})\|_{s}\Big).
\end{eqnarray*}
Moreover, if $C_{\kappa}(1+C_{\kappa}^{2})^{\frac{1}{2}}C^{\frac{1}{2}}\Big((1+\tilde{E}^{-1})^{1-M}+(1+E^{-1})^{1-N}\Big)^{\frac{1}{2}}H^{-\frac{d}{2}}\leq\cfrac{1}{2}$,
we have
\begin{eqnarray*}
\|u-F_{ms,1}(U_{ms})\|_{V} & \leq & C_{0}(1+(\frac{\Delta t}{H^{2}})^{\frac{1}{2}})\|\tilde{\kappa}^{-\frac{1}{2}}f\|_{L^{2}}+\|\tilde{\kappa}^{-\frac{1}{2}}\partial_{t}u\|_{L^{2}}\\
 &  & +C\Big((1+\tilde{E}^{-1})^{1-M}+(1+E^{-1})^{1-N}\Big)^{\frac{1}{2}}H^{-\frac{d}{2}}\Big(\|F_{1}(U_{glo})\|_{V}\Big).
\end{eqnarray*}
\end{theorem}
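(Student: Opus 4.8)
The plan is to combine the three previously established ingredients: the global approximation bound of Theorem~\ref{thm:glo_err}, the temporal decay of Lemma~\ref{lemma:decay_time}, and the spatial decay of Lemma~\ref{lemma:decay_space}, via the triangle inequality
\[
\|u-F_{ms,1}(U_{ms})\|_{V}\leq \|u-u_{glo}\|_{V}+\|u_{glo}-F_{ms,1}(U_{ms})\|_{V}.
\]
The first term is controlled directly by Theorem~\ref{thm:glo_err}, giving the $C_0(1+(\Delta t/H^2)^{1/2})\|\tilde\kappa^{-1/2}f\|_{L^2}+\|\tilde\kappa^{-1/2}\partial_t u\|_{L^2}$ contribution. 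So the real work is bounding the localization error $\|u_{glo}-F_{ms,1}(U_{ms})\|_{V}$. First I would observe that $u_{glo}=F_1(U_{glo})$ while $F_{ms,1}(U_{ms})=\sum_{n,i}\chi^{(n,i)}F_{loc,1}^{(n,i)}(U_{ms})$, and that both $U_{glo}$ and $U_{ms}$ solve coarse problems with the same right-hand side $s(\cdot,\psi_j^{(n,i)})=(f,\psi_j^{(n,i)})$ but against $F_2$ and $F_{ms,2}$ respectively. I would first estimate $\|F_1(U)-F_{ms,1}(U)\|_V$ for a \emph{fixed} $U$ by inserting the intermediate operator $\tilde F_{loc,1}^{(n)}(U)$: write $F_1(U)-\chi^{(n,i)}F_{loc,1}^{(n,i)}(U)$ locally as $(F_1(U)-\tilde F_{loc,1}^{(n)}(U))+(\tilde F_{loc,1}^{(n)}(U)-F_{loc,1}^{(n,i)}(U))$, apply Lemma~\ref{lemma:decay_time} to the first piece and Lemma~\ref{lemma:decay_space} to the second, and then sum over all coarse space-time blocks. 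The partition-of-unity overlap is $O(1)$ per block, but converting the right-hand sides of the two decay lemmas (which involve $\|F_1(U)\|_V+\|F_1(U)\|_s$ on thin annular/temporal strips) into the global norm $\|F_1(U)\|_V+\|F_1(U)\|_s$ is where the factor $H^{-d/2}$ enters: each fine strip is counted with multiplicity bounded by the number of oversampling regions containing it, which is $O(H^{-d})$ in the worst case, and one takes a square root. This yields
\[
\|F_1(U)-F_{ms,1}(U)\|_V\leq C\bigl((1+\tilde E^{-1})^{1-M}+(1+E^{-1})^{1-N}\bigr)^{1/2}H^{-d/2}\bigl(\|F_1(U)\|_V+\|F_1(U)\|_s\bigr).
\]

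Next I would handle the discrepancy between $U_{glo}$ and $U_{ms}$. Since $u_{glo}=F_1(U_{glo})$ and $F_{ms,1}(U_{ms})=F_{ms,1}(U_{ms})$, write
\[
u_{glo}-F_{ms,1}(U_{ms})=\bigl(F_1(U_{glo})-F_{ms,1}(U_{glo})\bigr)+\bigl(F_{ms,1}(U_{glo})-F_{ms,1}(U_{ms})\bigr).
\]
The first summand is bounded by the displayed estimate with $U=U_{glo}$. For the second, I would use the coarse equations: subtracting the two coarse problems gives $s(F_{ms,2}(U_{glo}-U_{ms}),\psi_j^{(n,i)})=s((F_2-F_{ms,2})(U_{glo}),\psi_j^{(n,i)})$, so $U_{glo}-U_{ms}$ is driven entirely by the localization error in the auxiliary component; combined with the stability of $\pi$, the inf-sup/coercivity furnished by Lemma~\ref{lemma: ineq_norm} and Lemma~\ref{Assumtion2}, and the bound $\|F_1(U)\|_s\leq C_\kappa\|F_1(U)\|_V$, one closes an estimate of the form $\|F_{ms,1}(U_{glo}-U_{ms})\|_V\leq C_\kappa(1+C_\kappa^2)^{1/2}\times(\text{decay factor})\times H^{-d/2}\|F_1(U_{glo})\|_V$. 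This is precisely the quantity that the smallness hypothesis $C_\kappa(1+C_\kappa^2)^{1/2}C^{1/2}((1+\tilde E^{-1})^{1-M}+(1+E^{-1})^{1-N})^{1/2}H^{-d/2}\leq\tfrac12$ is designed to absorb: it lets one move the $\|F_1(U_{ms})\|_V$ term that appears on the right (through $\|F_1(U_{ms})\|_s\leq C_\kappa\|F_1(U_{ms})\|_V$ and $\|F_1(U_{ms})\|_V\leq\|F_1(U_{glo})\|_V+\|F_1(U_{glo}-U_{ms})\|_V$ estimates) to the left-hand side and conclude with $\|F_1(U_{glo})\|_V$ only, which gives the second displayed bound of the theorem.

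I expect the main obstacle to be the bookkeeping in the summation step: carefully tracking that (i) the partition-of-unity functions $\chi^{(n,i)}$ and their gradients satisfy the bound in condition (c) of the cut-off definition so that applying $\chi^{(n,i)}$ does not destroy the $V$-norm control, (ii) each oversampled region $K_+^{(n,i)}$ overlaps only $O(H^{-d})$ others, so the finite-overlap constant is exactly the source of $H^{-d/2}$ after taking square roots of sums of squares, and (iii) the two decay rates $(1+\tilde E^{-1})^{1-M}$ and $(1+E^{-1})^{1-N}$ can be added rather than multiplied because the temporal and spatial localizations are performed sequentially through the intermediate operator $\tilde F_{loc}^{(n)}$. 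The coupling in the final step — where $\|F_1(U_{ms})\|$ appears on both sides — is the other delicate point, but it is purely algebraic once the smallness assumption is invoked, so it is not a genuine obstacle, merely a place where one must be careful about constants.
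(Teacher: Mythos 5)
Your plan matches the paper's proof in all essentials: the same triangle-inequality decomposition through the global solution, Theorem~\ref{thm:glo_err} for the consistency term, Lemmas~\ref{lemma:decay_time} and \ref{lemma:decay_space} summed over blocks with finite overlap to produce the $H^{-d/2}$ factor, and the smallness hypothesis used at the end to absorb $\|F_{1}(U_{ms})\|_{V}$ into $\|F_{1}(U_{glo})\|_{V}$. The one place you diverge is the choice of intermediate term for the coefficient discrepancy: you bound $F_{ms,1}(U_{glo})-F_{ms,1}(U_{ms})$, whereas the paper bounds $F_{1}(U_{glo})-F_{1}(U_{ms})$. The paper's choice is the safer one: the global operator satisfies the single variational identity $d(0,T;\Omega;F_{1}(W),w)=s(F_{2}(W),w)$ for all $w\in V_{d}$, so Lemma~\ref{lemma: ineq_norm} gives directly $\|F_{1}(W)\|_{V}^{2}\leq s(F_{2}(W),F_{1}(W))\leq C_{\kappa}\|F_{2}(W)\|_{s}\|F_{1}(W)\|_{V}$, and since both coarse problems force the auxiliary component to equal $\pi(\tilde{f})$, one has $F_{2}(U_{glo})-F_{2}(U_{ms})=F_{ms,2}(U_{ms})-F_{2}(U_{ms})$, which is then estimated blockwise by Lemma~\ref{lemma2} together with the two decay lemmas. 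Your version instead requires a coercivity or stability bound for the partition-of-unity--glued operator $F_{ms,1}$, which does not satisfy a clean global variational equation, so the citation of Lemmas~\ref{lemma: ineq_norm} and \ref{Assumtion2} does not directly justify the claimed estimate for $\|F_{ms,1}(U_{glo}-U_{ms})\|_{V}$; you would have to recover it by writing $F_{ms,1}(W)=F_{1}(W)+(F_{ms,1}-F_{1})(W)$ and reusing your localization estimate, i.e., by effectively reverting to the paper's intermediate term. With that repair the argument closes exactly as you describe.
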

\begin{proof}
Notice that $u-F_{ms,1}(U_{ms})=u-F_{1}(U_{glo})+F_{1}(U_{glo})-F_{1}(U_{ms})+F_{1}(U_{ms})-F_{ms,1}(U_{ms})$, where $F_{1}(U_{glo})$ is the solution to \eqref{gol_sol}. Using triangle inequality, we obtain
\[
\|u-F_{ms,1}(U_{ms})\|_{V}\leq\|u-F_{1}(U_{glo})\|_{V}+\|F_{1}(U_{glo})-F_{1}(U_{ms})\|_{V}+\|F_{1}(U_{ms})-F_{ms,1}(U_{ms})\|_{V}.
\]
We will estimate the above three terms separately.
By Theorem \ref{thm:glo_err}, we obtain the estimate for the first term:
\[
\|u-F_{1}(U_{glo})\|_{V}\leq C_{0}(1+(\frac{\Delta t}{H^{2}})^{\frac{1}{2}})\|\tilde{\kappa}^{-\frac{1}{2}}f\|_{L^{2}}+\|\tilde{\kappa}^{-\frac{1}{2}}\partial_{t}u\|_{L^{2}}.
\]
To estimate $\|F_{1}(U_{ms})-F_{ms,1}(U_{ms})\|_{V}$, we utilize Lemma \ref{lemma:decay_time} and Lemma \ref{lemma:decay_space} to obtain the following estimate.
\begin{align*}
&\|F_{1}(U_{ms})-F_{ms,1}(U_{ms})\|_{V}^{2}\\
=& \sum_{n,K^{i}}\|F_{1}(U_{ms})-F_{ms,1}(U_{ms})\|_{V(t_{n},t_{n+1};K^{i})}^{2}\\
  \leq & C\sum_{n,K^{i}}\Big((1+\tilde{E}^{-1})^{1-M}+(1+E^{-1})^{1-N_s}\Big)\|F_{1}(U_{ms})\|_{\tilde{V}(t_{n-M},t_{n+1}; K_{N_s}^{i}\backslash K_{N_s-1}^{i})}^{2}\\
  \leq & C\Big((1+\tilde{E}^{-1})^{1-M}+(1+E^{-1})^{1-N_s}\Big)H^{-d}\|F_{1}(U_{ms})\|_{\tilde{V}}^{2},
\end{align*}
where $\|.\|_{\tilde{V}}$ is defined as $\|v\|_{\tilde{V}}^{2}:=\|v\|_{V}^{2}+\|v\|_{s}^{2}$. Finally, we only need to 
estimate $\|F_{1}(U_{glo})-F_{1}(U_{ms})\|_V$. Using Cauchy-Schwartz Inequality and the definition of $C_{\kappa}$, we have
\begin{align*}
\|F_{1}(U_{glo})-F_{1}(U_{ms})\|_{V}^{2} & \leq s(F_{2}(U_{glo})-F_{2}(U_{ms}),F_{1}(U_{glo})-F_{1}(U_{ms}))\\
 & \leq\|F_{2}(U_{glo})-F_{2}(U_{ms})\|_{s}\|F_{1}(U_{glo})-F_{1}(U_{ms})\|_{s}\\
 & \leq C_{\kappa}\|F_{2}(U_{glo})-F_{2}(U_{ms})\|_{s}\|F_{1}(U_{glo})-F_{1}(U_{ms})\|_{V}.
\end{align*}
For any $K^{i}\in\mathcal{T}_H$ and $0\leq t_n<t_{n+1}\leq T$, by Lemma \ref{lemma2} we have
\begin{align*}
 & \|F_{2}(U_{glo})-F_{2}(U_{ms})\|_{s(t_{n},t_{n+1};K^i)}^{2}=\|F_{loc,2}^{(n,i)}(U_{ms})-F_{2}(U_{ms})\|_{s(t_{n},t_{n+1};K^i)}^{2}\\
\leq & C\Big((1+\tilde{E}^{-1})^{1-M}+(1+E^{-1})^{1-N_s}\Big)\Big(\|F_{1}(U_{ms})\|_{\tilde{V}(t_{n-M},t_{n+1}; K_{N_s}^{i})}^{2}-\|F_{1}(U_{ms})\|_{\tilde{V}(t_{n-M+1},t_{n+1}; K_{N_s-1}^{i})}^{2}\Big).
\end{align*}
Finally, we obtain
\[
\|F_{1}(U_{glo})-F_{1}(U_{ms})\|_{V}^{2}\leq C_{\kappa}^{2}C\Big((1+\tilde{E}^{-1})^{1-M}+(1+E^{-1})^{1-N_s}\Big)H^{-d}\|F_{1}(U_{ms})\|_{\tilde{V}}^{2}.
\]
Since $\|v\|_{s}\leq C_{\kappa}\|v\|_{V}$ for any $v\in V$ , we have
\[
\|v\|_{\tilde{V}}\leq(1+C_{\kappa}^{2})^{\frac{1}{2}}\|v\|_{V}.
\]
Therefore, we have
\begin{align*}
\|F_{1}(U_{ms})-F_{1}(U_{glo})\|_{V} & \leq C_{\kappa}(1+C_{\kappa}^{2})^{\frac{1}{2}}C^{\frac{1}{2}}\Big((1+\tilde{E}^{-1})^{1-M}+(1+E^{-1})^{1-N_s}\Big)^{\frac{1}{2}}H^{-\frac{d}{2}}\|F_{1}(U_{ms})\|_{V}\\
 & \leq C_{\kappa}(1+C_{\kappa}^{2})^{\frac{1}{2}}C^{\frac{1}{2}}\Big((1+\tilde{E}^{-1})^{1-M}+(1+E^{-1})^{1-N_s}\Big)^{\frac{1}{2}}H^{-\frac{d}{2}}\Big( \\
 &~~~~\|F_{1}(U_{ms})-F_{1}(U_{glo})\|_{V}+\|F_{1}(U_{glo})\|_{V}\Big).
\end{align*}
If $C_{\kappa}(1+C_{\kappa}^{2})^{\frac{1}{2}}C^{\frac{1}{2}}\Big((1+\tilde{E}^{-1})^{1-M}+(1+E^{-1})^{1-N}\Big)^{\frac{1}{2}}H^{-\frac{d}{2}}\leq\cfrac{1}{2}$,
we have
\[
\|F_{1}(U_{ms})-F_{1}(U_{glo})\|_{V}\leq2C_{\kappa}(1+C_{\kappa}^{2})^{\frac{1}{2}}C^{\frac{1}{2}}\Big((1+\tilde{E}^{-1})^{1-M}+(1+E^{-1})^{1-N}\Big)^{\frac{1}{2}}H^{-\frac{d}{2}}\|F_{1}(U_{glo})\|_{V}.
\]

\end{proof}
\textbf{Remark:} If the multiscale partition of unity $\chi_{i}$ is replaced by the bilinear partition of unity in the definition of $\tilde{\kappa}$, one can easily prove that with an appropriate choice of the spatial and temporal oversampling layers, we have 
\begin{equation*}
\|u-F_{ms,1}(U_{ms})\|_{V}  \lesssim  C H\|\kappa f\|_{L^{2}}+CH\|\kappa \partial_{t}u\|_{L^{2}}
  +CH\|F_{1}(U_{glo})\|_{V}.
\end{equation*}

\section{Numerical Results}\label{sec:numerical_SP}
In this section, we present numerical results for the proposed numerical method. We shall solve the system \eqref{PDEModel} in the unit square $\Omega=[0,1]^2$ with total time $T=1.0$. The source term $f(t,x)$ is chosen to be a smooth function $f(t,x):=x_1x_2t$. The permeability filed $\kappa(t,x)$ is  time-dependent. We will test our numerical methods with two kinds of permeability fields: slow moving permeability in Experiment 1 and a faster moving permeability in Experiment 2.

Let $\mathcal{T}_H\times\mathcal{T}_{\Delta t} $ be a decomposition of the space-time domain $ \Omega\times [0,T]$ into non-overlapping shape-regular cubic elements with maximal spatial mesh size $H$ and temporal mesh size $\Delta t$. These coarse cubic elements are further partitioned into a collection of connected fine cubic elements $\mathcal{T}_h\times\mathcal{T}_{\delta t}$ using fine spatial mesh size $h$ and temporal mesh size $\delta t$. We define $V_{h,\delta}$ to be a conforming piecewise affine finite element associated with $\mathcal{T}_{h}\times\mathcal{T}_{\delta t}$. Since there is no analytic solution to system \eqref{PDEModel}, we are going to find an approximation of its exact solutions. To this end, we use the constructed fine mesh and conforming space-time finite element method to obtain the reference solutions $U_{h,\delta}$. The multiscale solutions $U^{\ell_x,\ell_t}_{\text{ms}}$ are obtained using our proposed space-time NLMC method with spatial oversampling layers number being $\ell_x$ and temporal oversampling layers number being $\ell_t$.  We use $\tilde{U}_{h,\delta,t}$ to denote the snapshot of the reference solutions at time $t$ and $\tilde{U}^{\ell_x,\ell_t}_{\text{ms},t}$ to denote the snapshot of  multiscale solutions using spatial oversampling layer $\ell_x$ and temporal oversampling layer $\ell_t$ at time $t$. To simply notations, we use  $\tilde{U}^{\ell}_{\text{ms},t}$ to denote $\tilde{U}^{\ell_x,\ell_t}_{\text{ms},t}$ when the number of spatial oversampling layers equals that of temporal oversampling layers, that is,  $\ell:=\ell_x=\ell_t$.

We introduce the following notations to calculate the errors. The relative errors for the multiscale solution in $L^2$-norm and $H^1_{\kappa}$-norm are
 \[
 \text{Rel}_{L^2}^\ell :=\frac{ \norm{U_{h,\delta}-U^{\ell_x,\ell_t}_{\text{ms}}}_{L^2}}{ \norm{U_{h,\delta}} _{L^2}} \times 100 \quad\text{ and }\quad
  \text{Rel}_{H_{\kappa}^1}^\ell :=\frac{ \norm{U_{h,\delta}-U^{\ell_x,\ell_t}_{\text{ms}}}_{H_{\kappa}^1}}{ \norm{U_{h,\delta}} _{H_{\kappa}^1 }}\times 100.
  \]

\subsection{Experiment 1: Slow moving  permeability}
In this experiment, we choose the permeability with 1 channel moving slowly in horizontal direction. Let
\[
S_1:=\{(x_1,x_2,t): 0.375<x_1<0.6094,0.50<x_2<0.5156, 0\leq t< 0.5 \}
\]
and 
\[
S_2:=\{(x_1,x_2,t): 0.3906<x_1<0.6250, 0.50<x_2<0.5156, 0.5\leq t\leq 1.0 \}.
\]
The permeability $\kappa(x_1,x_2,t)$ is defined as
\begin{equation*}
\kappa(x_1,x_2,t):=
\left\{
\begin{aligned}
1000,&\text{ if } (x_1,x_2,t)\in S_1\cup S_2, \\
1,& \text{ otherwise. } 
\end{aligned}
\right .
\end{equation*}

We present the permeability field at time $t=0$ and $t=0.5$ in Figure \ref{Exp1: permeability} for an illustration.
\begin{figure}[H]
		\centering
		\includegraphics[trim={2.9cm 1.8cm 1.0cm 1.5cm},clip,width=0.24 \textwidth]{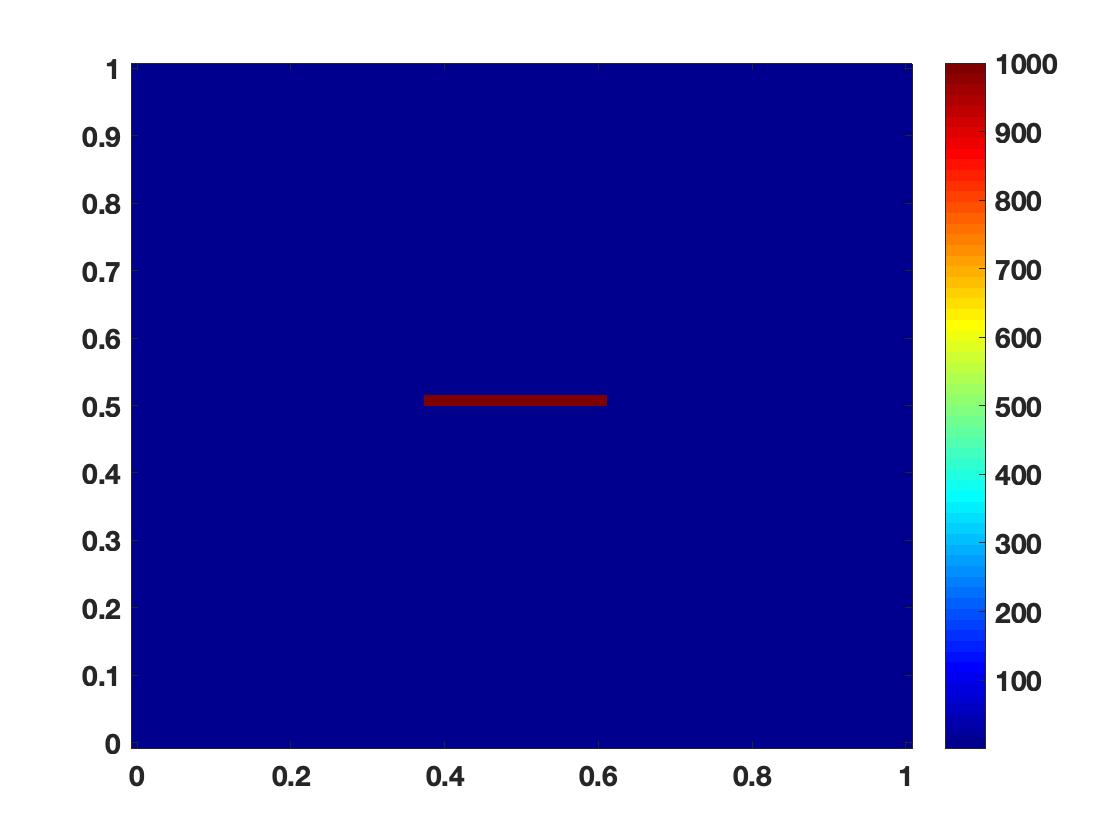}
		\includegraphics[trim={2.9cm 1.8cm 1.0cm 1.5cm},clip,width=0.24 \textwidth]{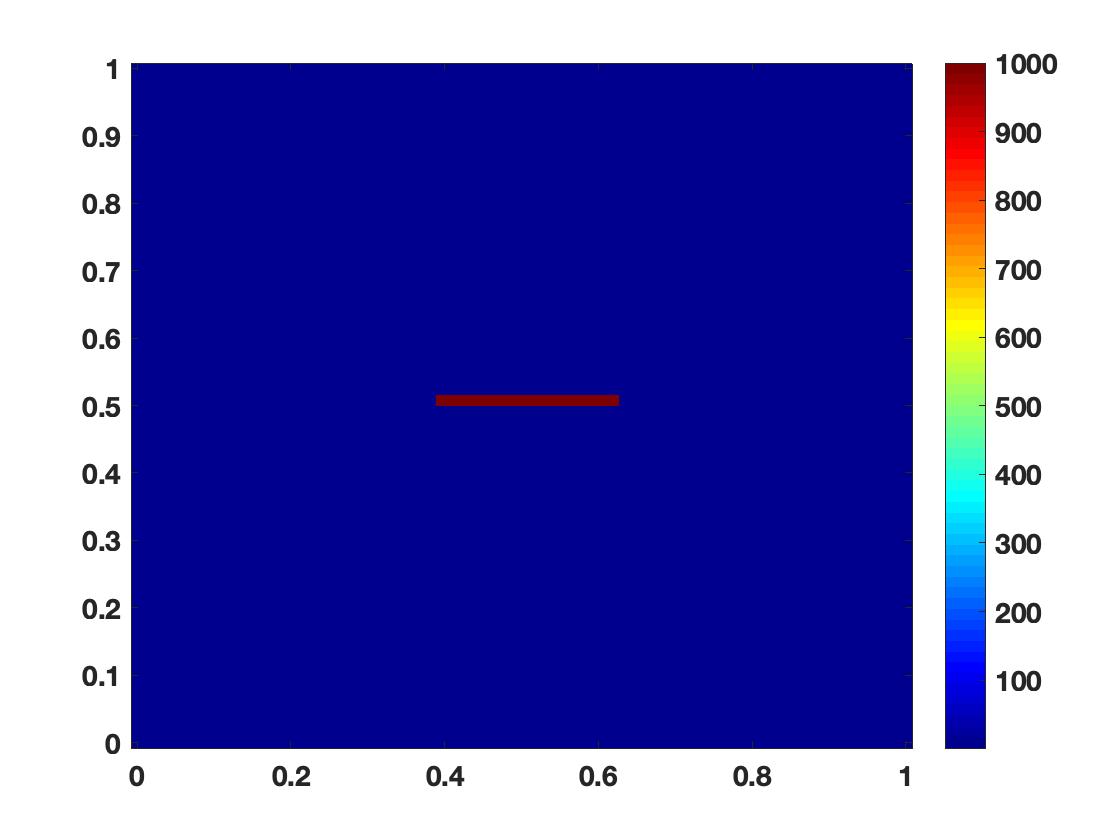}
		\caption{Permeability field $\kappa(x_1,x_2,0)$ and $\kappa(x_1,x_2,0.5)$ for Experiment 1.}
		\label{Exp1: permeability}
\end{figure}
We choose fine spatial mesh size $h=2^{-6}$, fine temporal mesh size $\delta t=0.01$, coarse spatial mesh size $H=2^{-3}$ and coarse temporal mesh size $\Delta t=0.1$. The number of spatial and temporal oversampling layers $\ell_x$ and $\ell_t$ are chosen to be $\ell=\ell_x=\ell_t \in \{1,2,\cdots,5\}$.
The snapshot of reference solutions $\tilde{U}_{h,\delta,t}$ and multiscale solutions $\tilde{U}^\ell_{\text{ms},t}$ for  $t= 0.25,0.5,0.75,1.0$ and $\ell=1,2,3$ are plotted in Figure \ref{Exp1: fine solution} and Figure \ref{Exp1: multiscale_sol}, respectively. 
\begin{figure}[H]
		\centering
		\includegraphics[trim={2.9cm 1.8cm 1.0cm 0.6cm},clip,width=0.24 \textwidth]{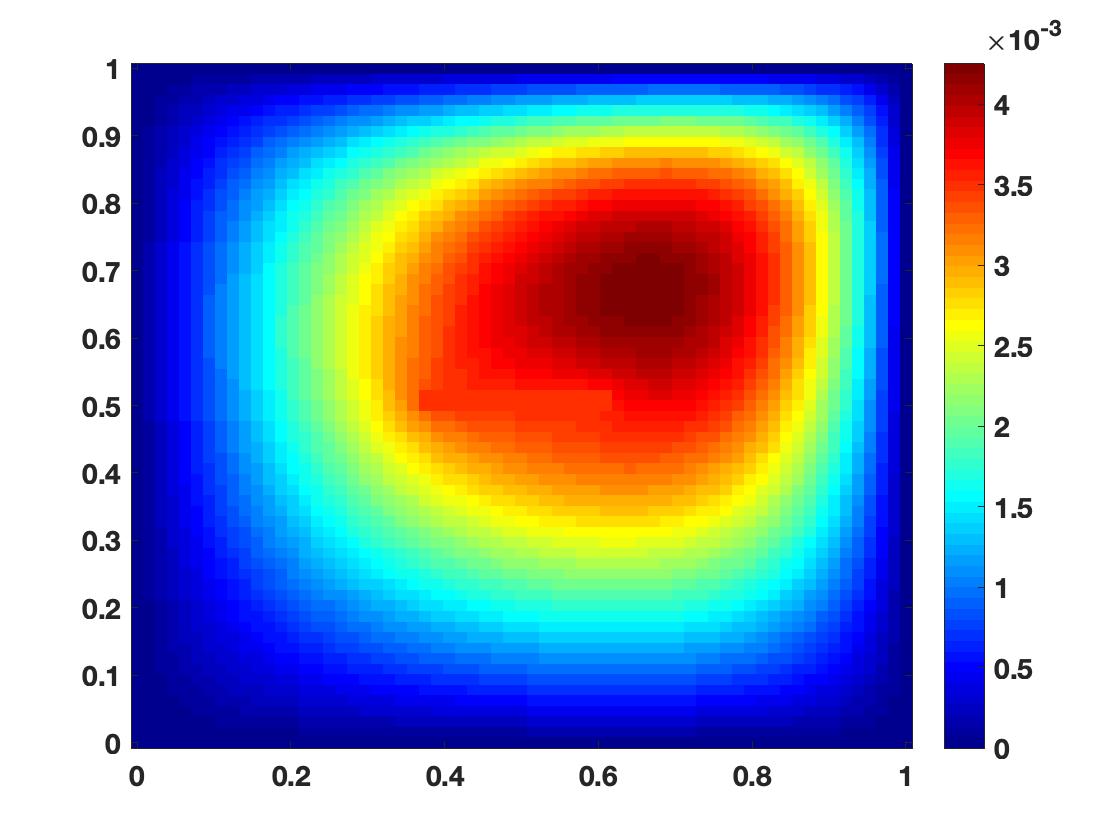}
		\includegraphics[trim={2.9cm 1.8cm 1.0cm 0.6cm},clip,width=0.24 \textwidth]{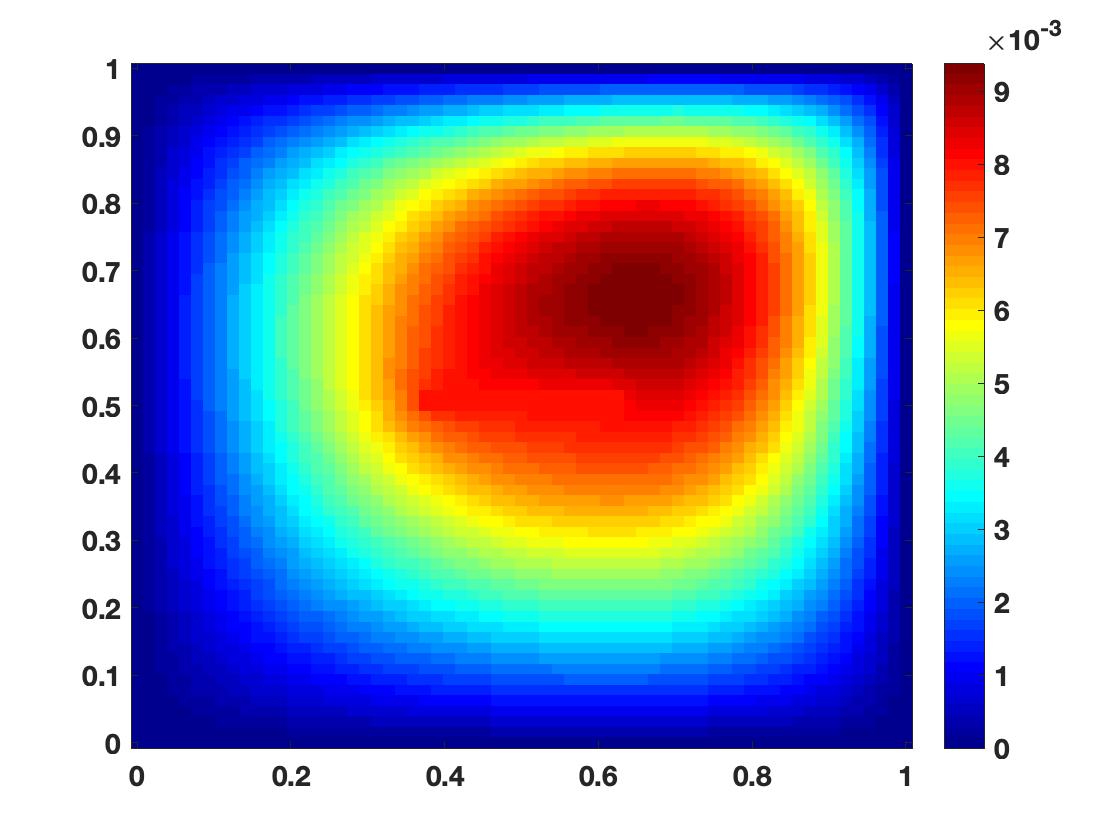}
		\includegraphics[trim={2.9cm 1.8cm 1.0cm 0.6cm},clip,width=0.24 \textwidth]{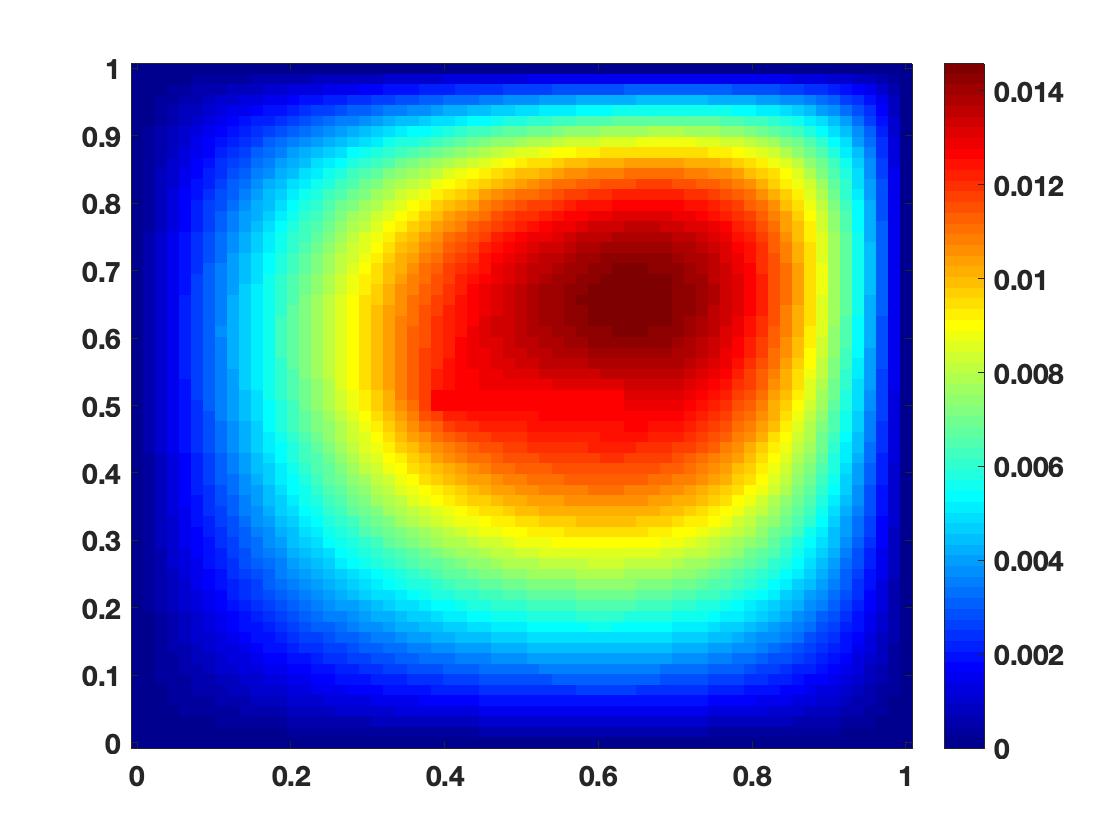}
		\includegraphics[trim={2.9cm 1.8cm 1.0cm 0.6cm},clip,width=0.24 \textwidth]{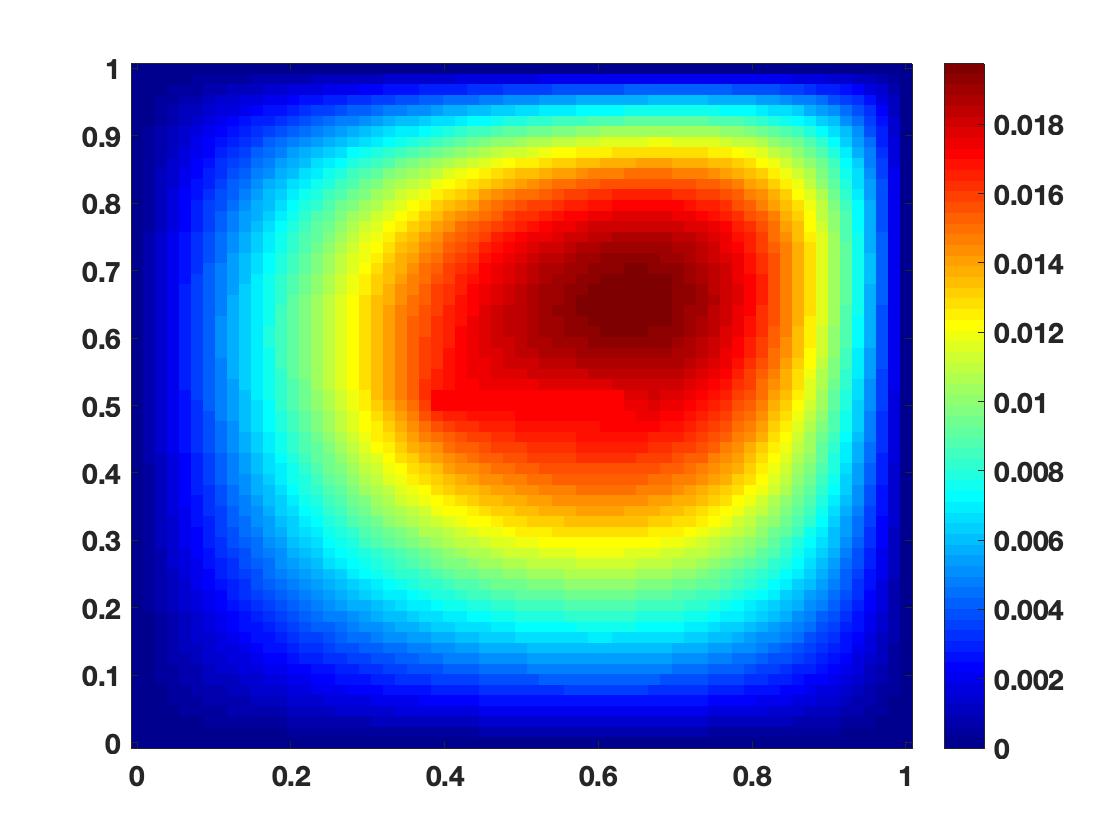}
		\caption{Snapshot of the reference solutions  $\tilde{U}_{h,\delta,t}$ for  $t= 0.25,0.5,0.75,1.0$.}
		\label{Exp1: fine solution}
\end{figure}

\begin{figure}[H]
		\centering
		\includegraphics[trim={2.9cm 1.8cm 1.0cm 0.6cm},clip,width=0.24 \textwidth]{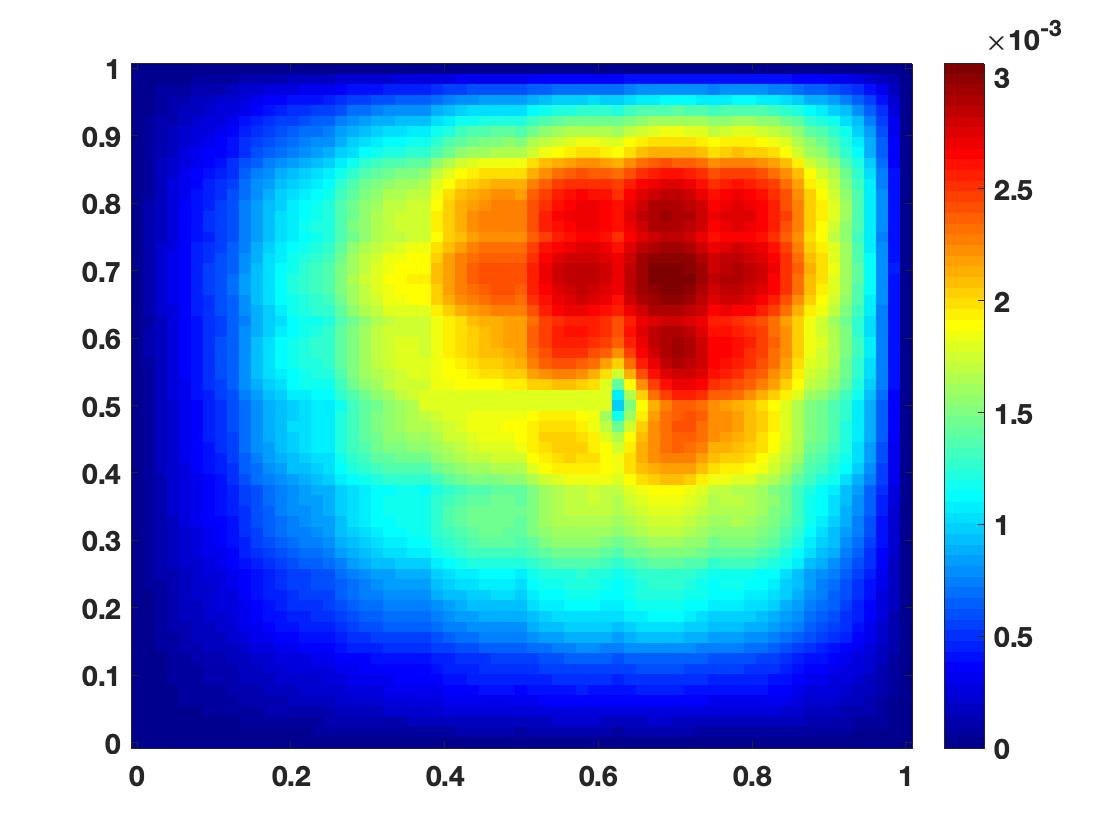}
		\includegraphics[trim={2.9cm 1.8cm 1.0cm 0.6cm},clip,width=0.24 \textwidth]{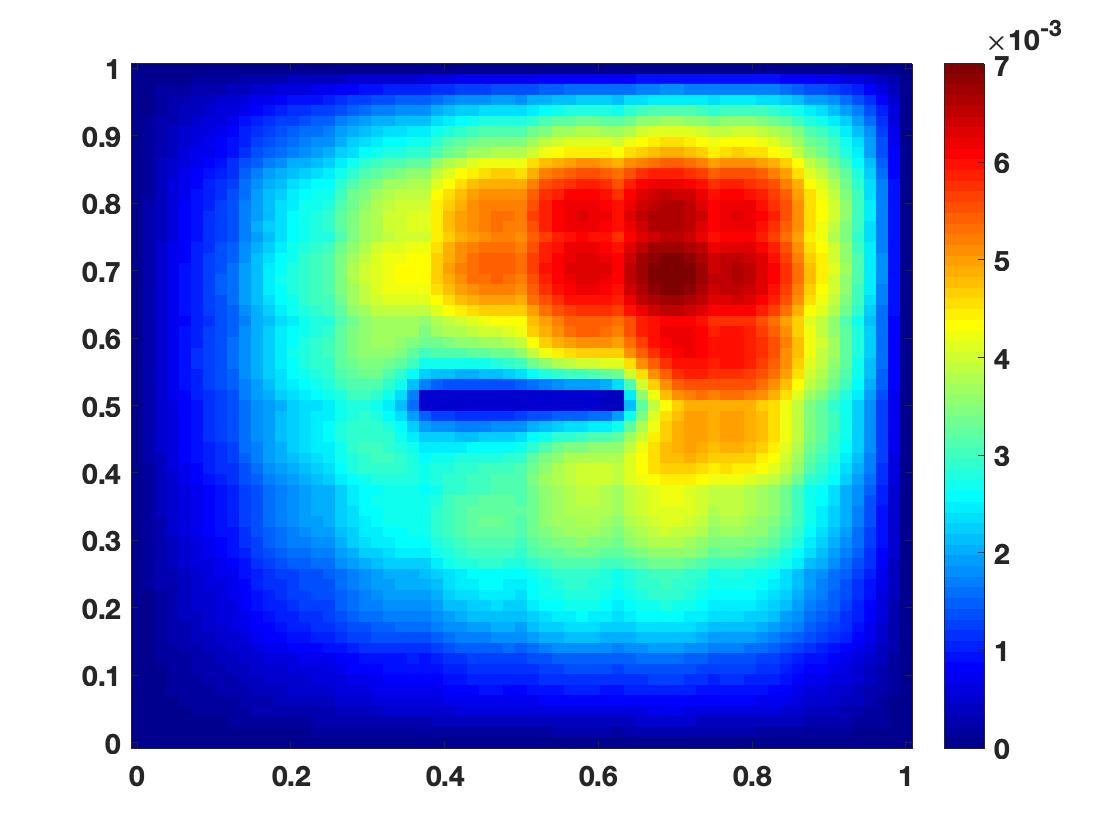}
		\includegraphics[trim={2.9cm 1.8cm 1.0cm 0.6cm},clip,width=0.24 \textwidth]{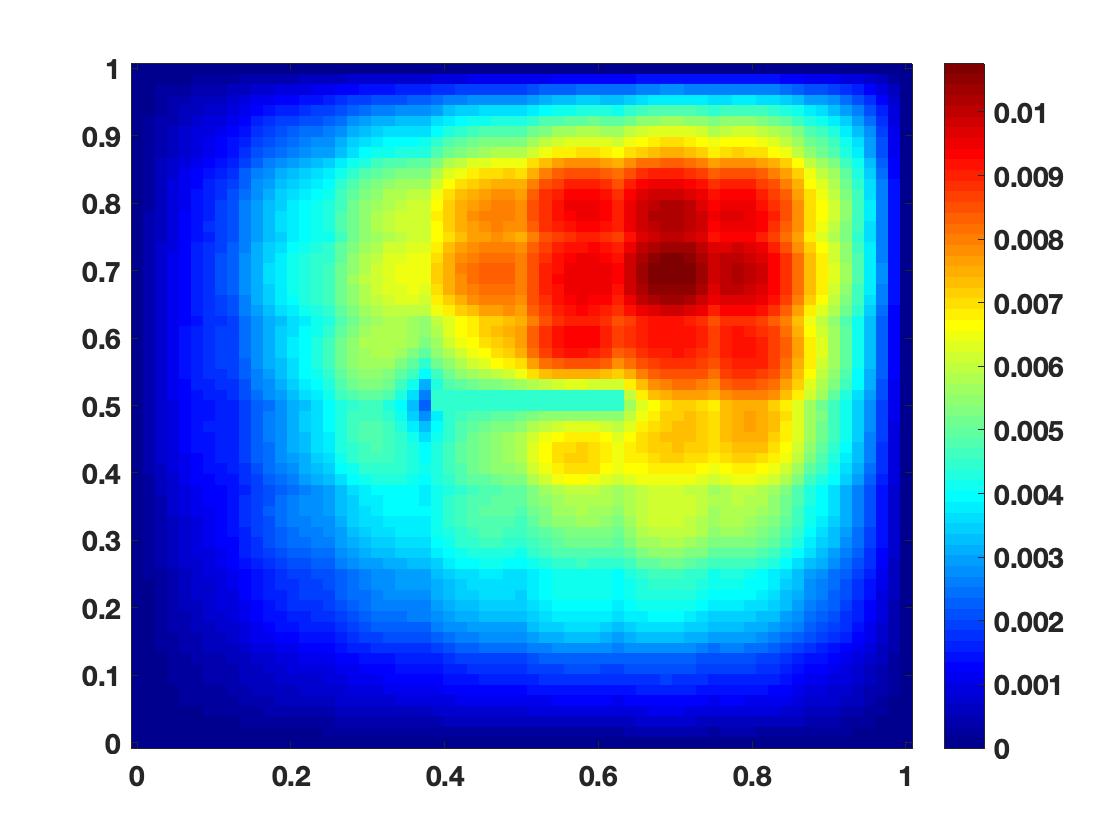}
		\includegraphics[trim={2.9cm 1.8cm 1.0cm 0.6cm},clip,width=0.24 \textwidth]{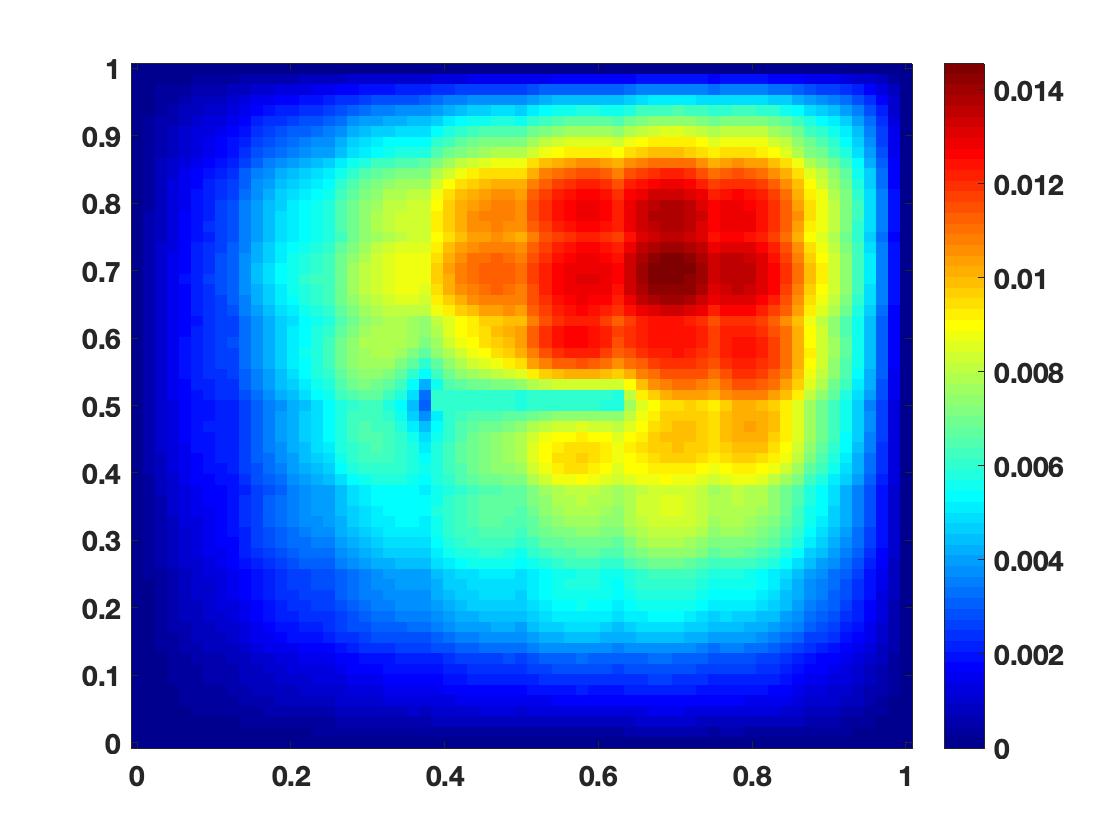}\\
		\includegraphics[trim={2.9cm 1.8cm 1.0cm 0.6cm},clip,width=0.24 \textwidth]{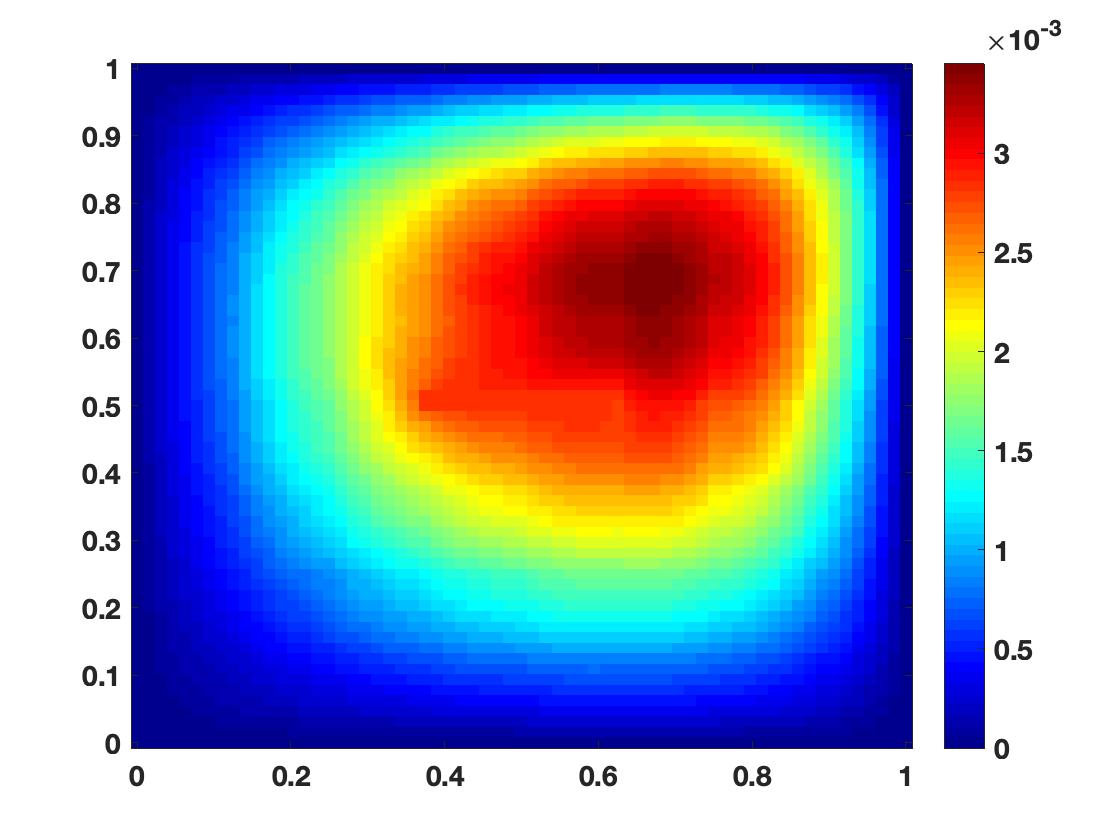}
		\includegraphics[trim={2.9cm 1.8cm 1.0cm 0.6cm},clip,width=0.24 \textwidth]{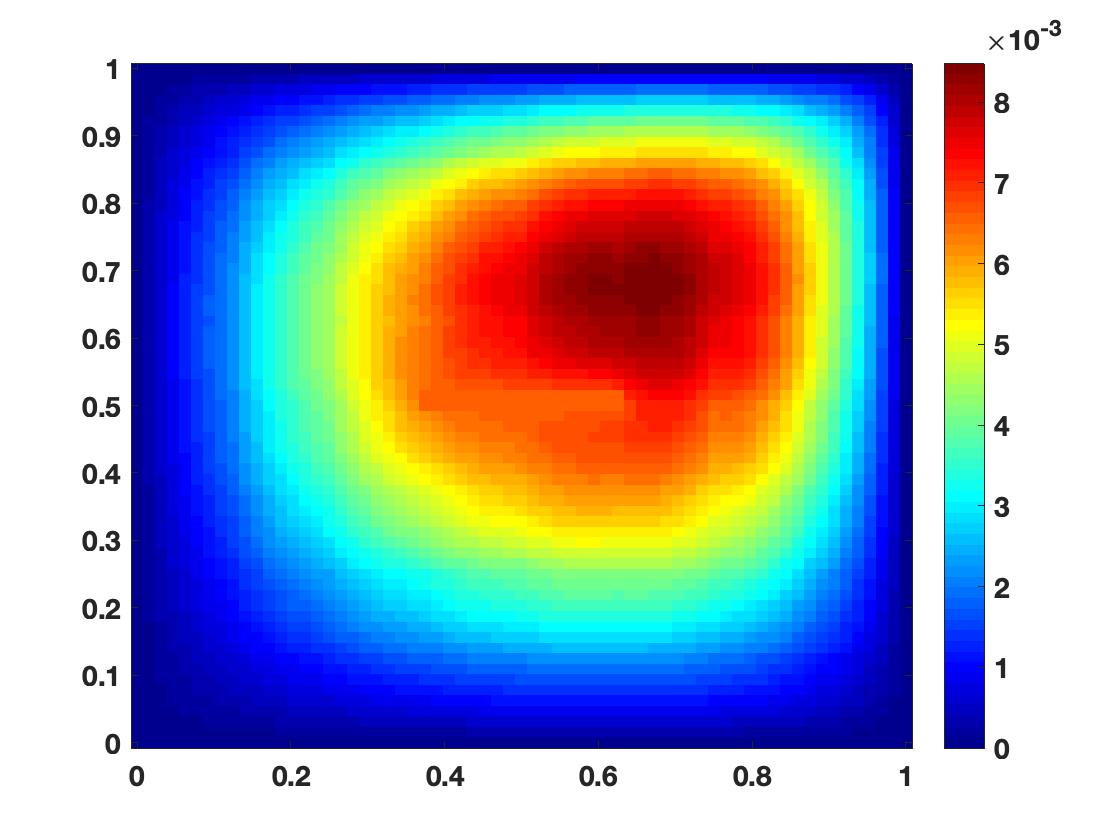}
		\includegraphics[trim={2.9cm 1.8cm 1.0cm 0.6cm},clip,width=0.24 \textwidth]{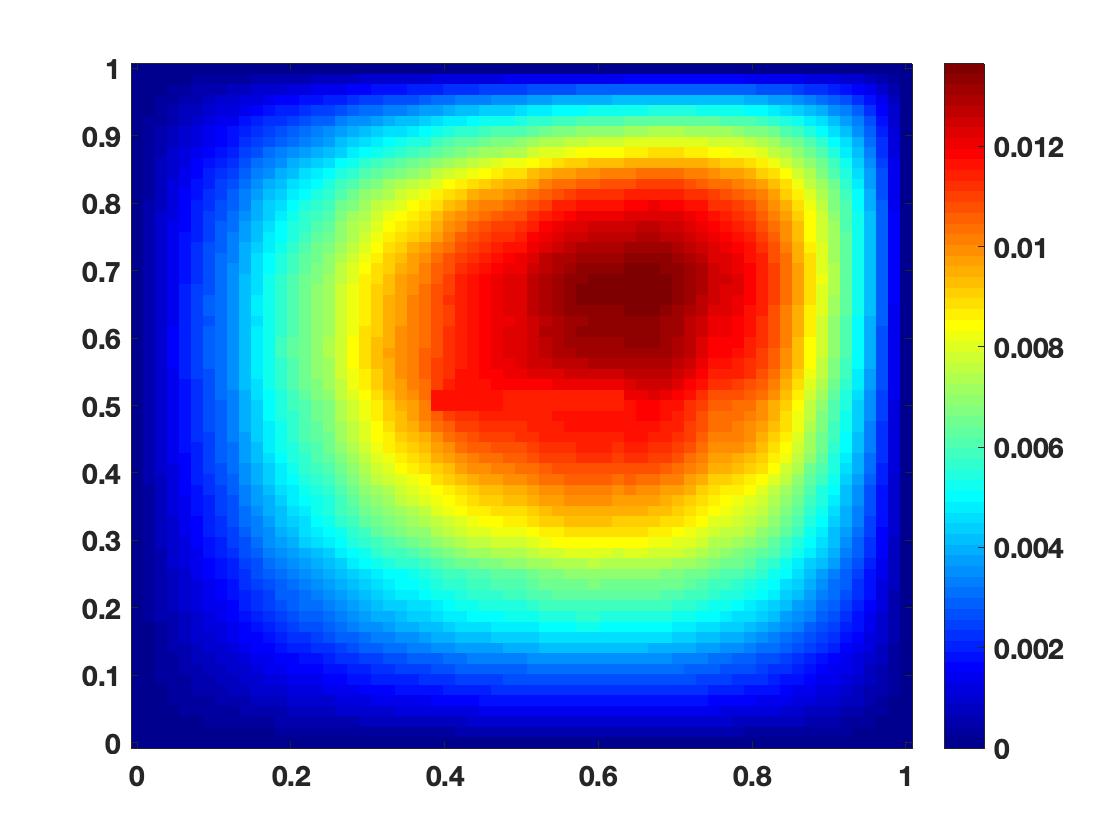}
		\includegraphics[trim={2.9cm 1.8cm 1.0cm 0.6cm},clip,width=0.24 \textwidth]{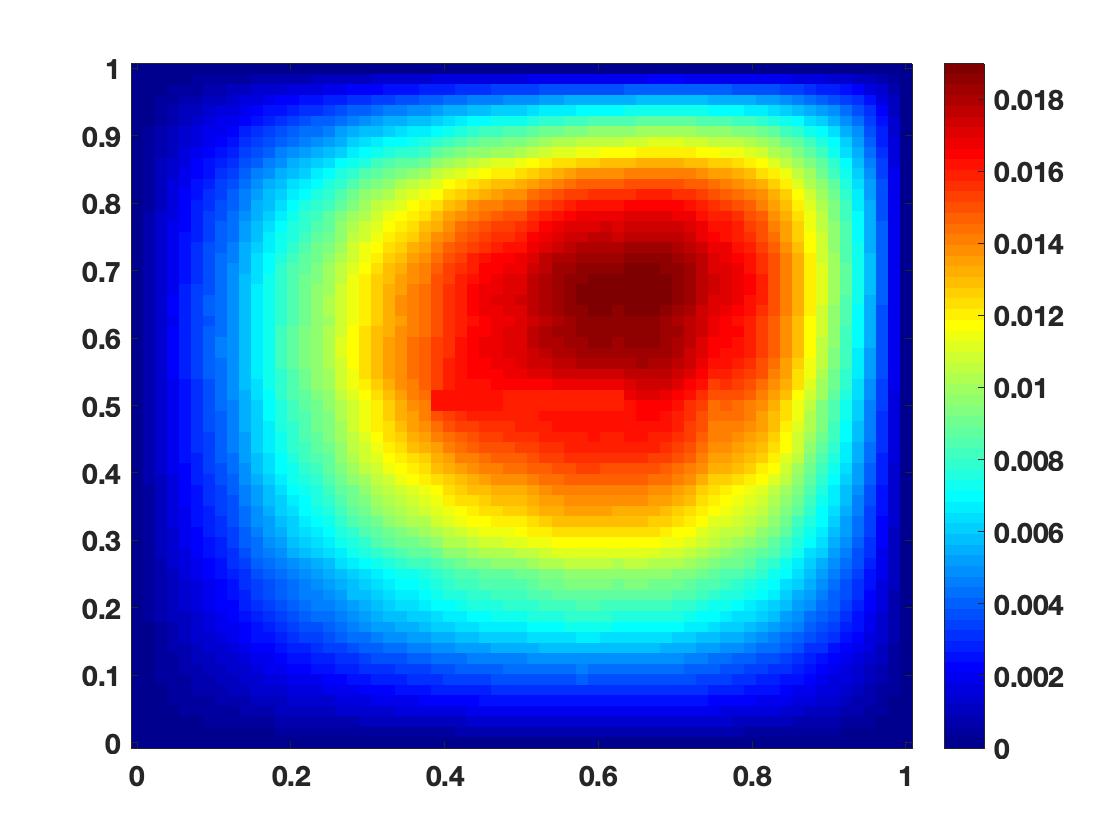}\\
		\includegraphics[trim={2.9cm 1.8cm 1.0cm 0.6cm},clip,width=0.24 \textwidth]{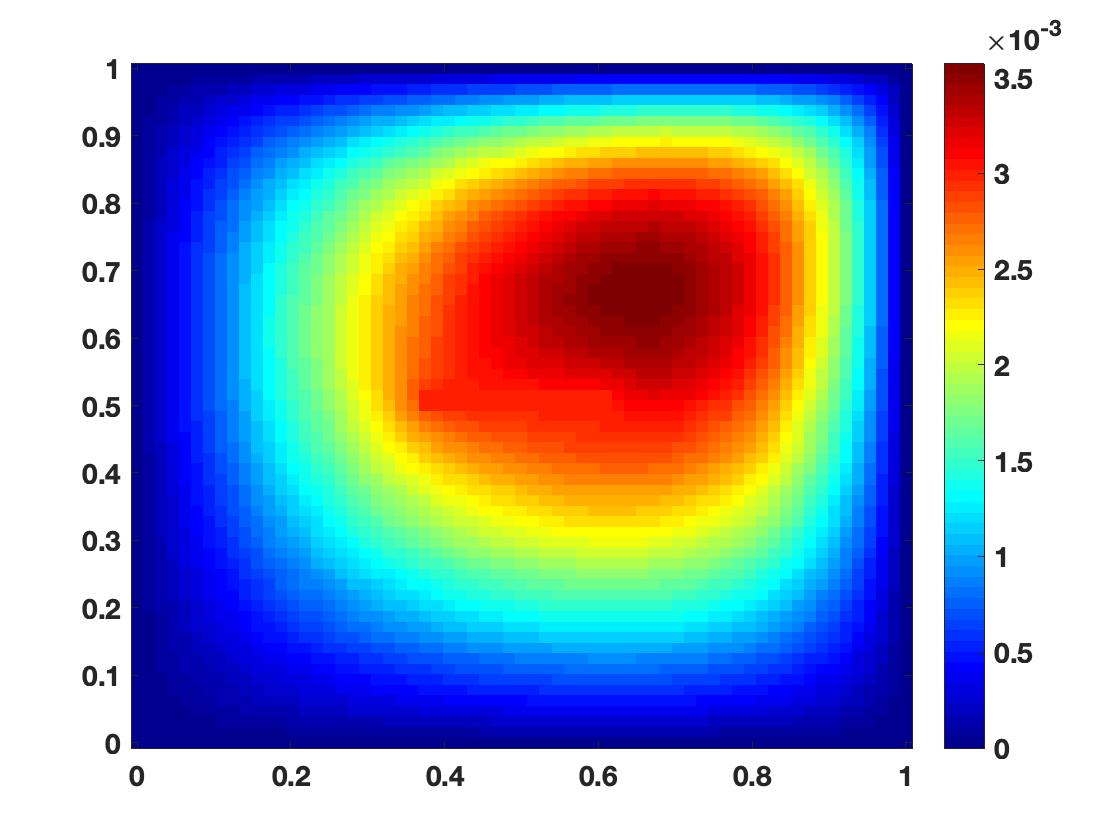}
		\includegraphics[trim={2.9cm 1.8cm 1.0cm 0.6cm},clip,width=0.24 \textwidth]{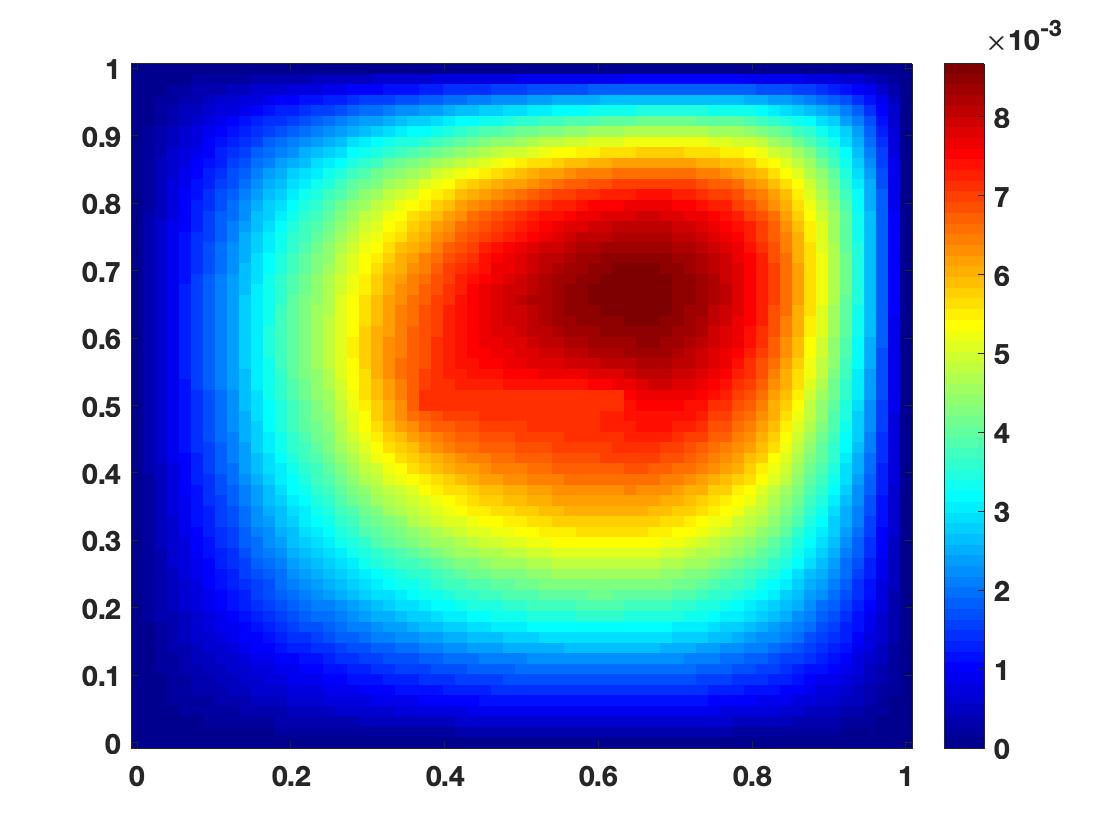}
		\includegraphics[trim={2.9cm 1.8cm 1.0cm 0.6cm},clip,width=0.24 \textwidth]{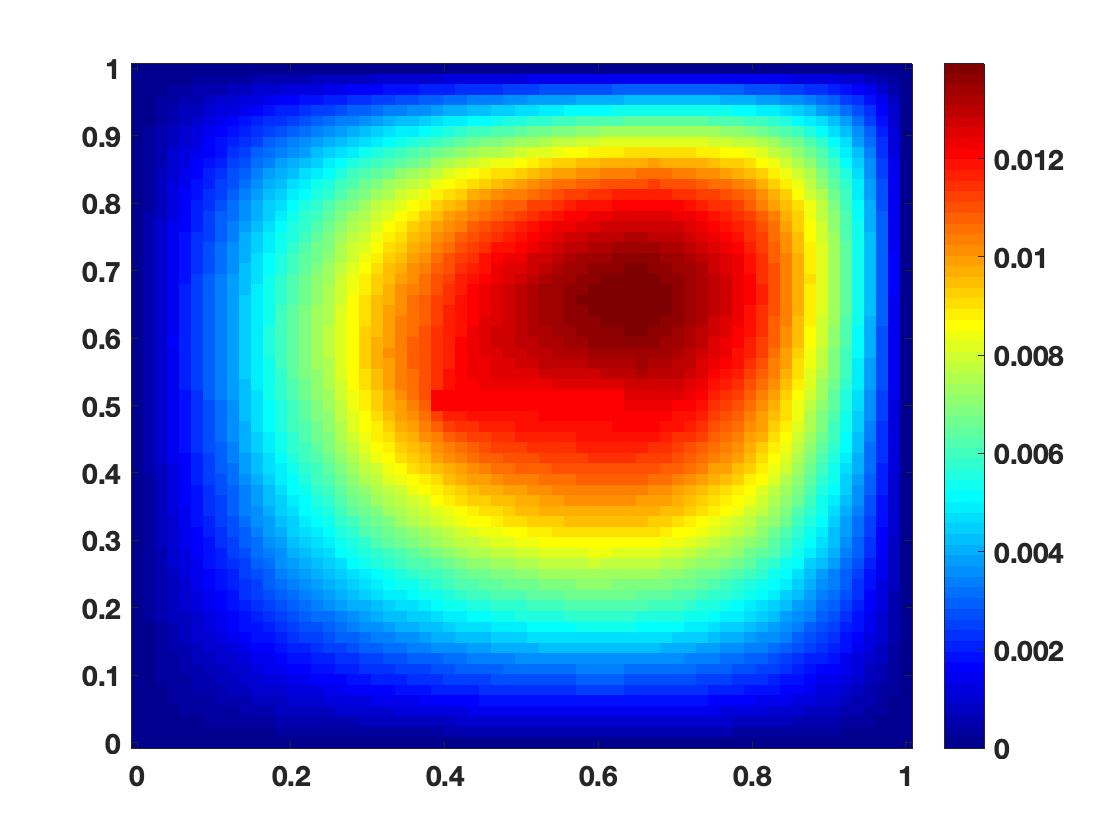}
		\includegraphics[trim={2.9cm 1.8cm 1.0cm 0.6cm},clip,width=0.24 \textwidth]{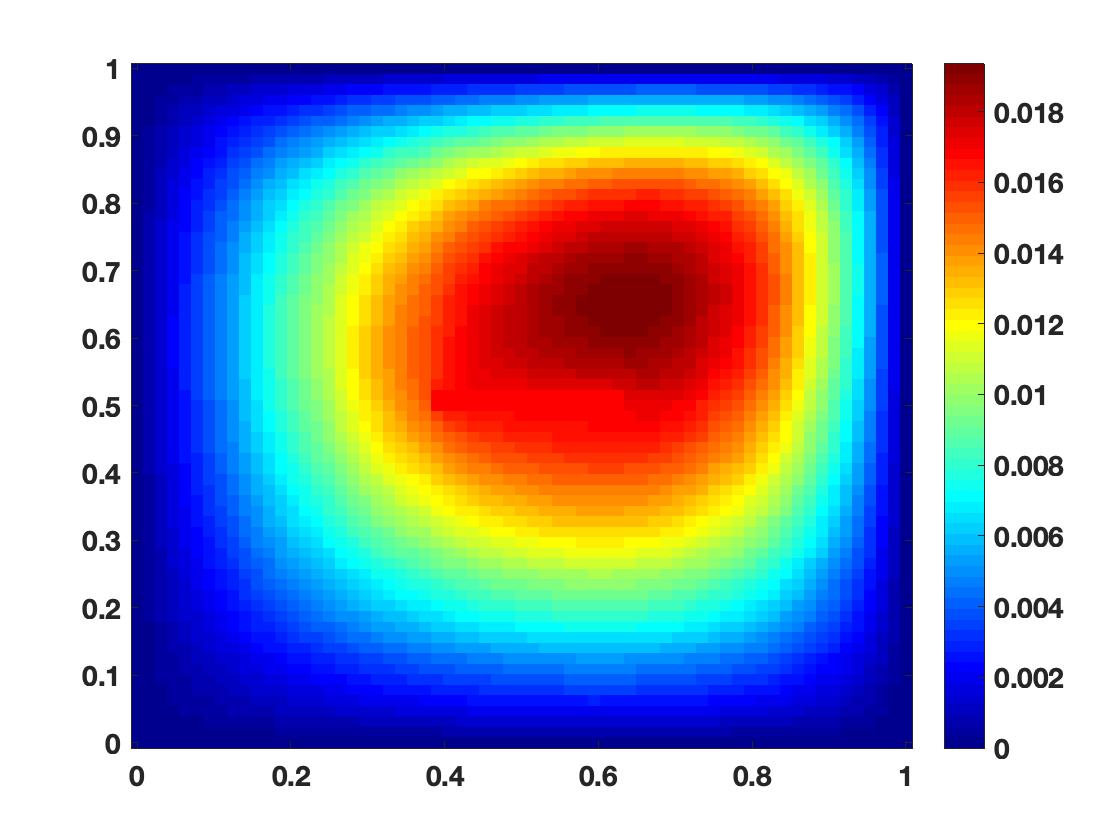}
		\caption{Snapshot of the multiscale solutions $\tilde{U}^\ell_{\text{ms},t}$ at $t=0.25,0.5,0.75,1.0$ with oversampling layer $\ell=1$ (top), $\ell=2$ (middle), $\ell=3$ (bottom).}
		\label{Exp1: multiscale_sol}
\end{figure}

The convergence history in relative $L^2$-norm and relative $H_{\kappa}^1$-norm with oversampling layers number $\ell=1,2,\cdots,5$ are presented in Table \ref{Table: Exp1_RelativeError}.
\begin{table}[H]
	\begin{center}
	\begin{tabular}{|c|c|c|c|c|c|c|}
	\hline
	$\ell$ & $\text{Rel}_{H_{\kappa}^1}^\ell $&$\text{Rel}_{L^2}^\ell$
	\\ \hline
	 1 & 53.6304&35.6654 \\
 2 &  15.2632 &5.0203\\
 3 &  7.2096 & 3.3863\\
 4 & 4.3655  &2.7838\\
 5 &  3.4061 &2.5349 \\
	\hline
	\end{tabular}
	\end{center}
	\vspace{-.4cm}
	\caption{Convergence history of  Experiment 1.}
	\label{Table: Exp1_RelativeError}
	\end{table}
Theorem \ref{thm:main_sp} shows that the approximated numerical solution can be improved by increasing the oversampling size. This is confirmed by this numerical experiment as shown in Table \ref{Table: Exp1_RelativeError}.
\subsection{Experiment 2: Faster moving  permeability }
In this experiment, we choose a permeability with faster moving channels. To define the permeability for this experiment, we introduce 4 sets $S_1, S_2, S_3$ and $S_4$ as follows.
\begin{align*}
S_{1}&:= \cup_{k=1}^{25} \{(x_1,x_2,t): 0.09+0.01k\leq x_1\leq 0.11+0.01k, 0.30\leq x_2\leq 0.70, 0.04(k-1)\leq t\leq 0.04k\}, \\
S_{2}&:=\cup_{k=1}^{20} \{(x_1,x_2,t): 0.39+0.01k\leq x_1\leq 0.79+0.01k, 0.15\leq x_2\leq 0.17, 0.05(k-1)\leq t\leq 0.05k\}, \\
S_{3}&:=\cup_{k=1}^{25} \{(x_1,x_2,t): 0.29+0.01k\leq x_1\leq 0.44+0.01k, 0.19+0.01k\leq x_2\leq 0.21+0.01k, \\
&~~~~~~~~~~~~~~~~~~~~~~~~~~~0.04(k-1)\leq t\leq 0.04k\}, \\
S_{4}&:=\cup_{k=1}^{10} \{(x_1,x_2,t): 0.59+0.01k\leq x_1\leq 0.94+0.01k, 0.63+0.01k\leq x_2\leq 0.65+0.01k,\\
&~~~~~~~~~~~~~~~~~~~~~~~~~~~0.1(k-1)\leq t\leq 0.1k\}.
\end{align*}
The permeability $\kappa(x_1,x_2,t)$ is defined as below:
\begin{equation*}
\kappa(x_1,x_2,t):=
\left\{
\begin{aligned}
1000,&\text{ if } (x_1,x_2,t)\in S_1\cup S_2\cup S_3\cup S_4, \\
1,& \text{ otherwise. } 
\end{aligned}
\right .
\end{equation*}

We present the permeability field at time $t=0, 0.5, 0.8$ and $t=1.0$ in Figure \ref{Exp2: permeability} for an illustration.
\begin{figure}[H]
		\centering
		\includegraphics[trim={2.9cm 1.8cm 1.0cm 2cm},clip,width=0.24 \textwidth]{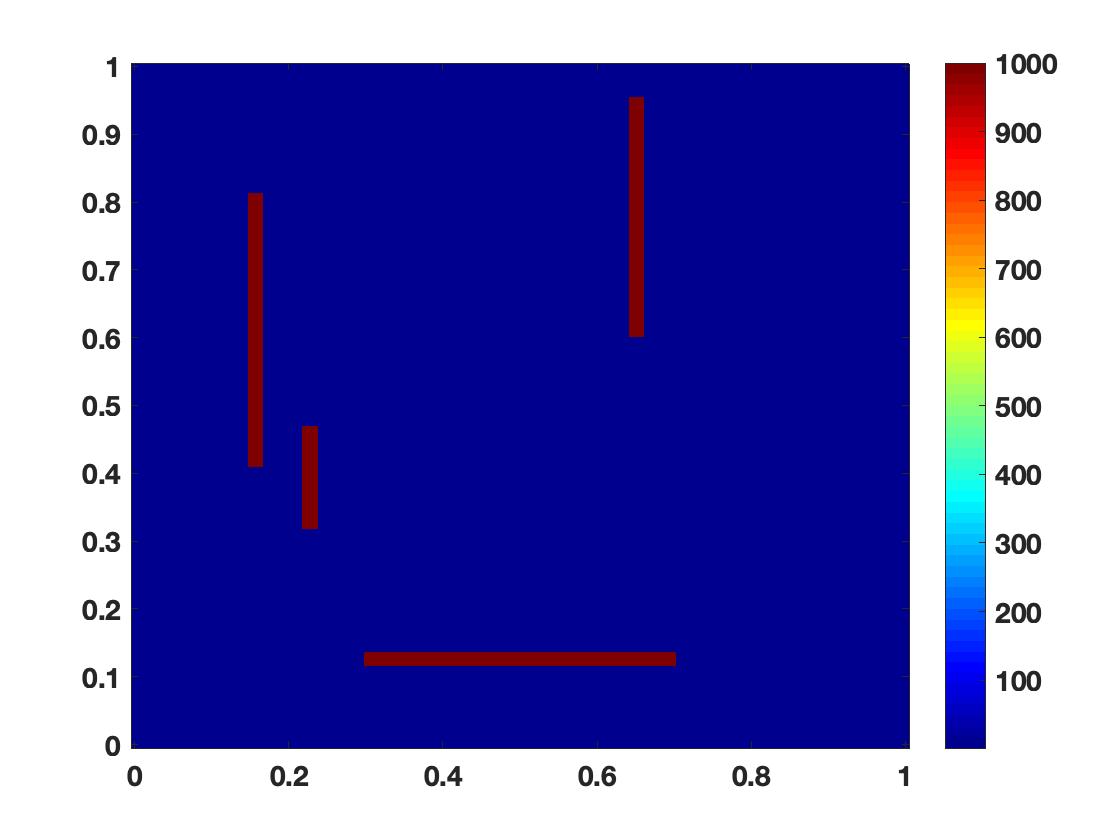}
		\includegraphics[trim={2.9cm 1.8cm 1.0cm 2cm},clip,width=0.24 \textwidth]{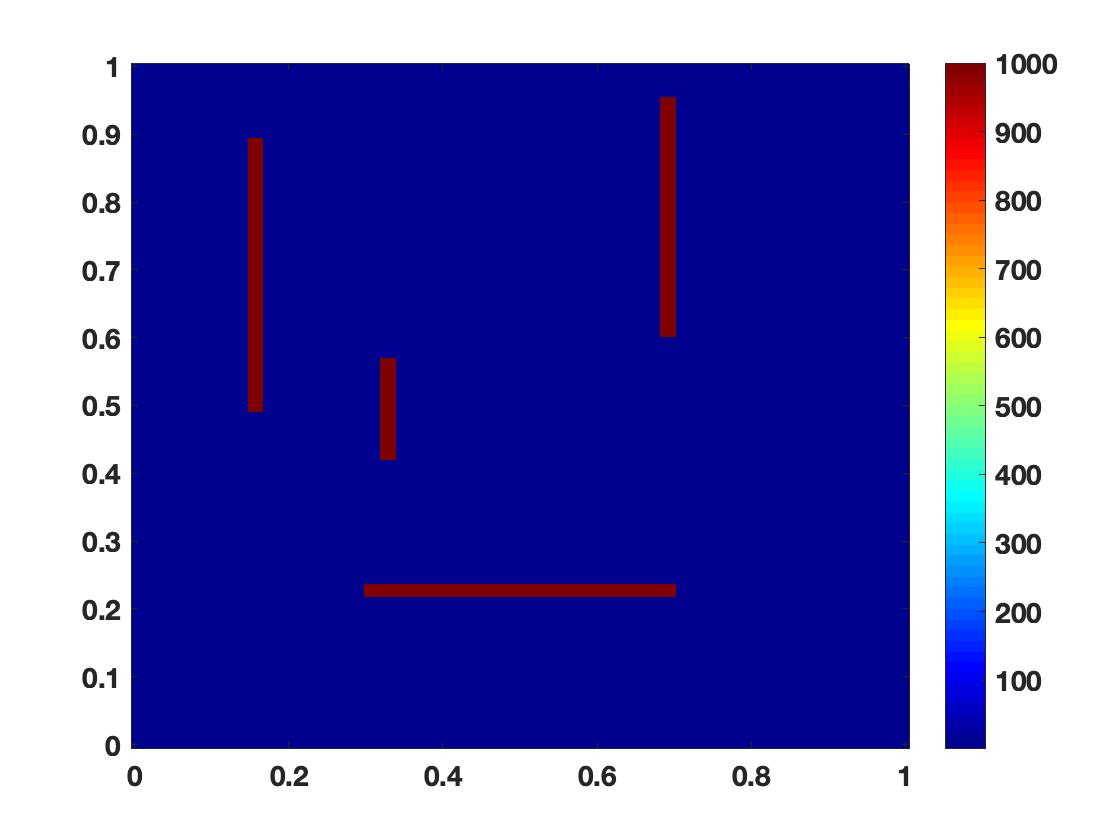}
		\includegraphics[trim={2.9cm 1.8cm 1.0cm 2cm},clip,width=0.24 \textwidth]{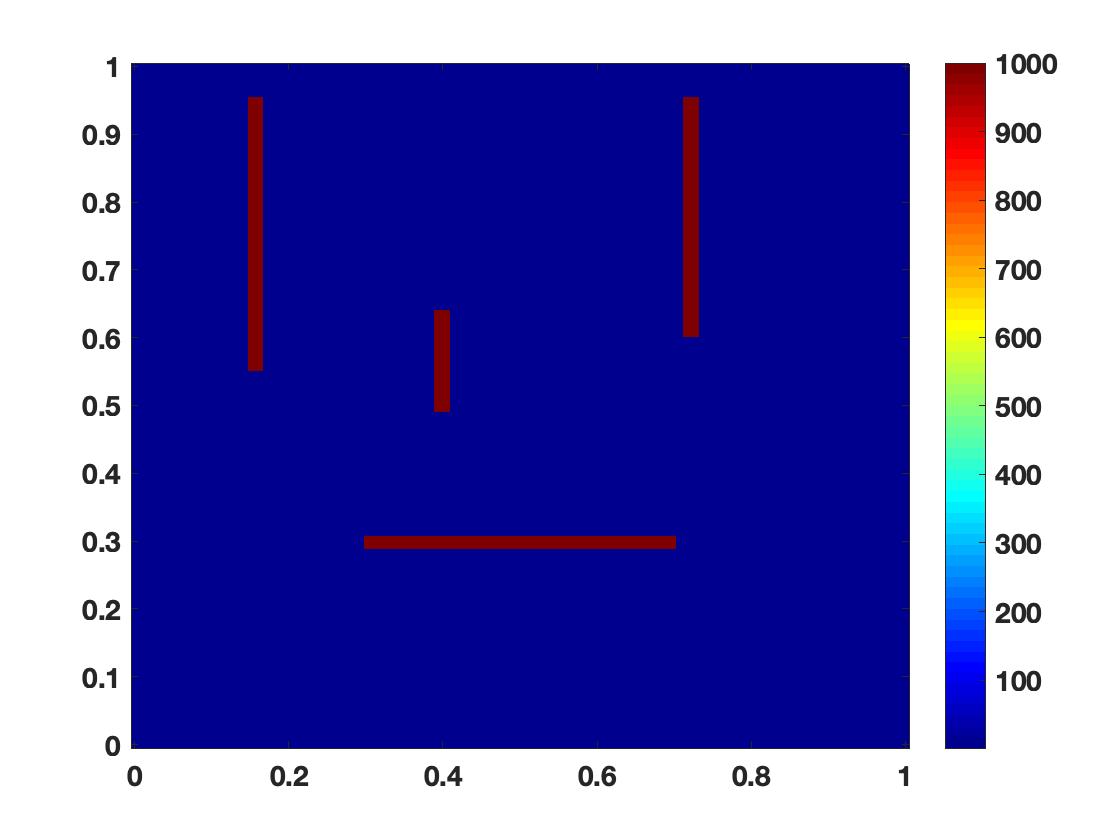}
		\includegraphics[trim={2.9cm 1.8cm 1.0cm 2cm},clip,width=0.24 \textwidth]{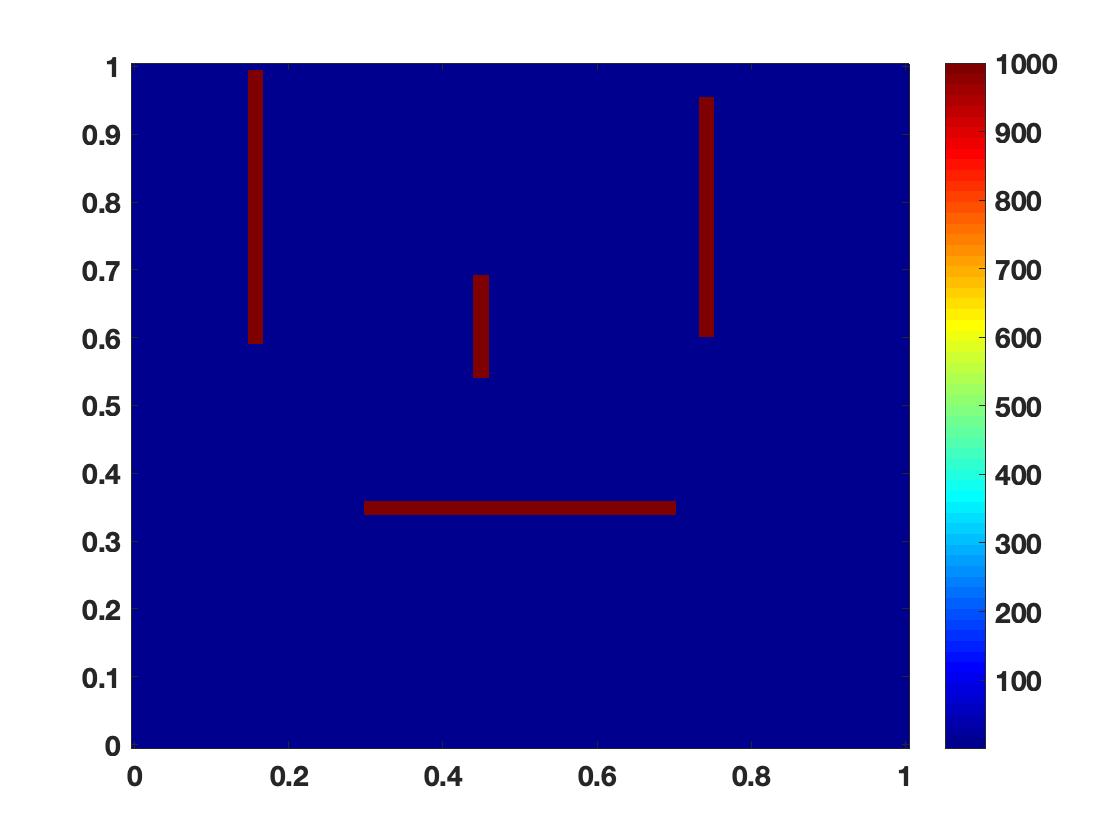}
		\caption{Permeability field $\kappa(x_1,x_2,0), \kappa(x_1,x_2,0.5), \kappa(x_1,x_2,0.8)$ and $\kappa(x_1,x_2,1.0).$
		\label{Exp2: permeability}}
\end{figure}

The spatial and temporal fine mesh size we use to approximate the exact solution is $h=0.01$ and $\delta t=0.01$.
The snapshot of reference solutions $\tilde{U}_h(t)$ to approximate exact solution $u(x,t)$ at time $t= 0.2,0.5,0.8,1.0$ are plotted as below.
\begin{figure}[H]
		\centering
		\includegraphics[trim={2.9cm 1.6cm 1.0cm 0.6cm},clip,width=0.24 \textwidth]{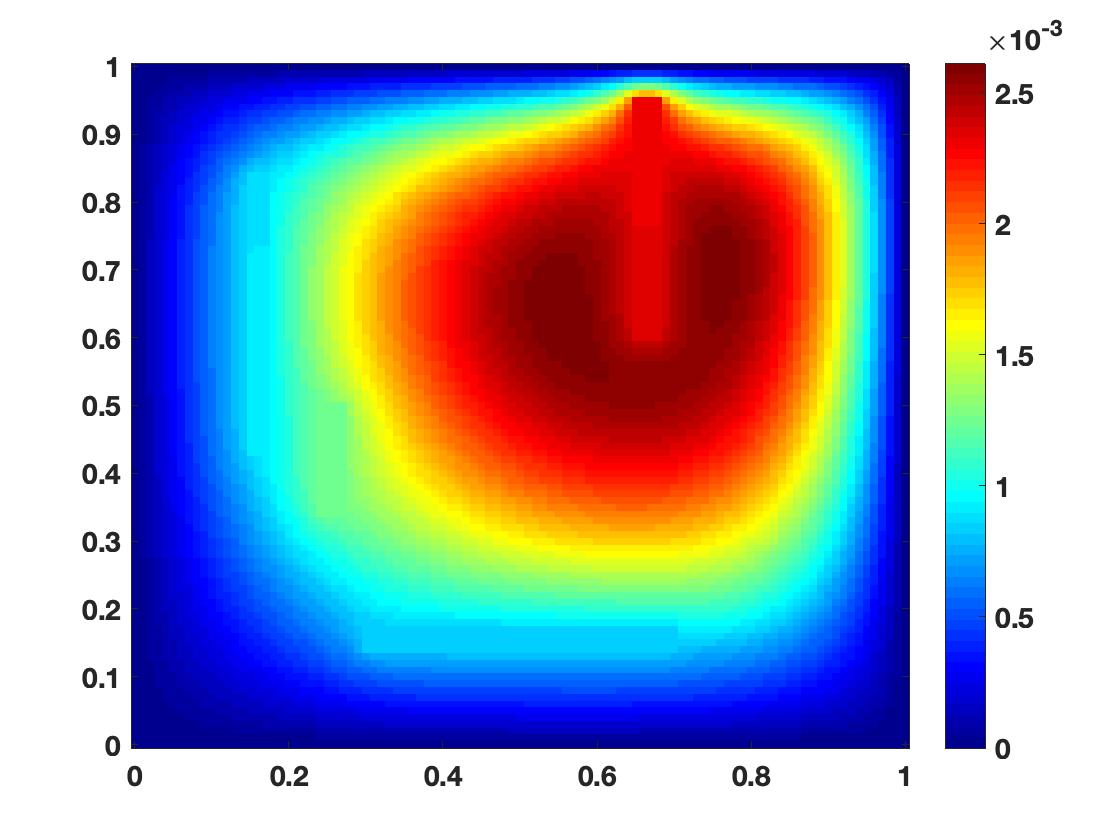}
		\includegraphics[trim={2.9cm 1.6cm 1.0cm 0.6cm},clip,width=0.24 \textwidth]{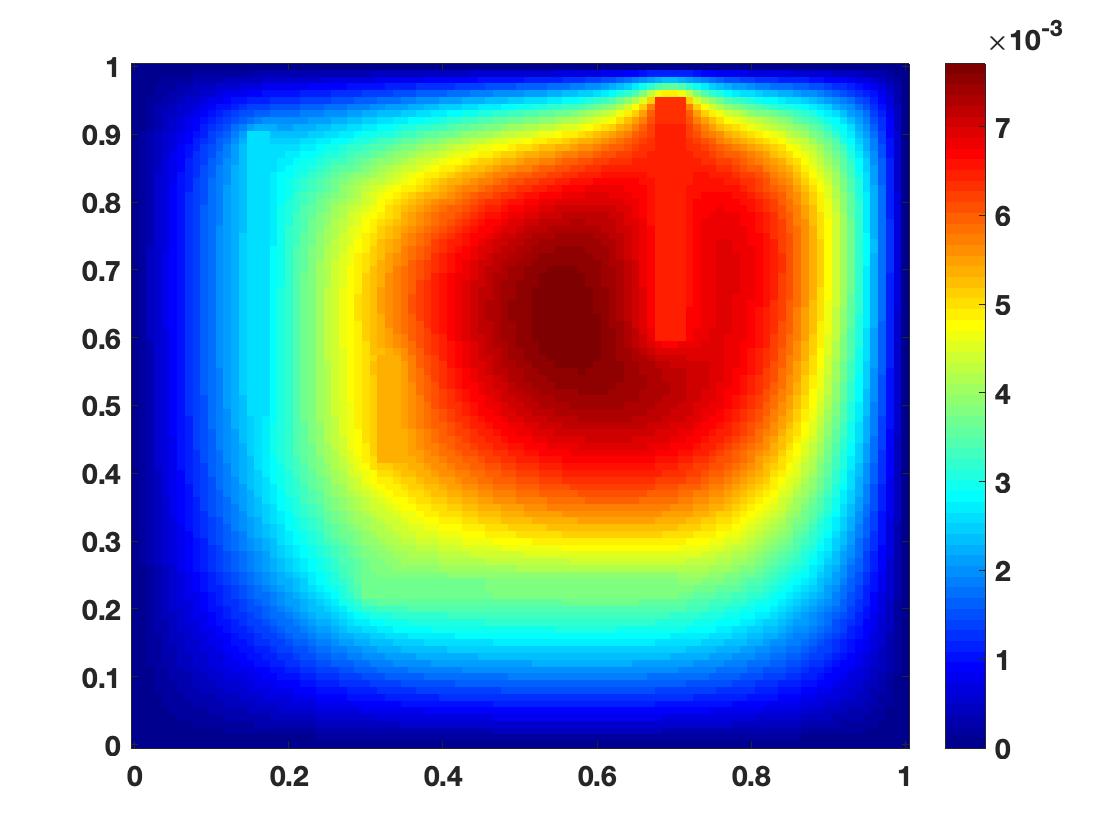}
		\includegraphics[trim={2.9cm 1.6cm 1.0cm 0.6cm},clip,width=0.24 \textwidth]{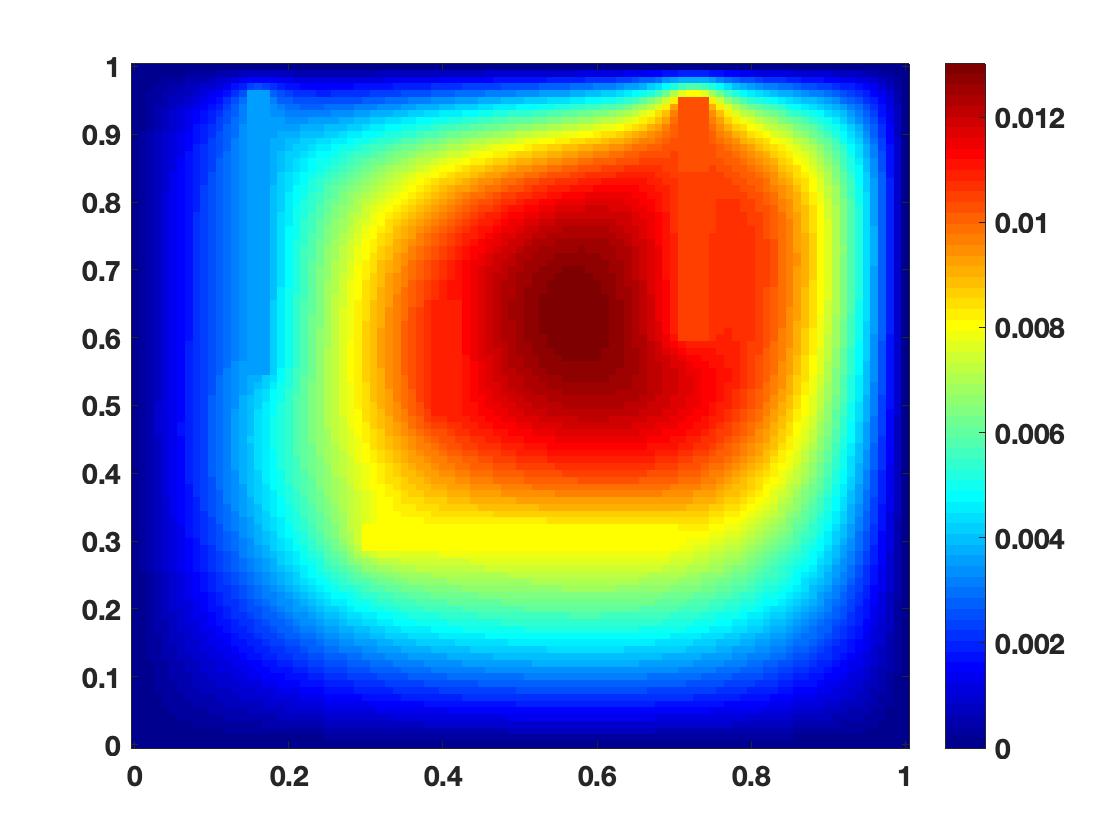}
		\includegraphics[trim={2.9cm 1.6cm 1.0cm 0.6cm},clip,width=0.24 \textwidth]{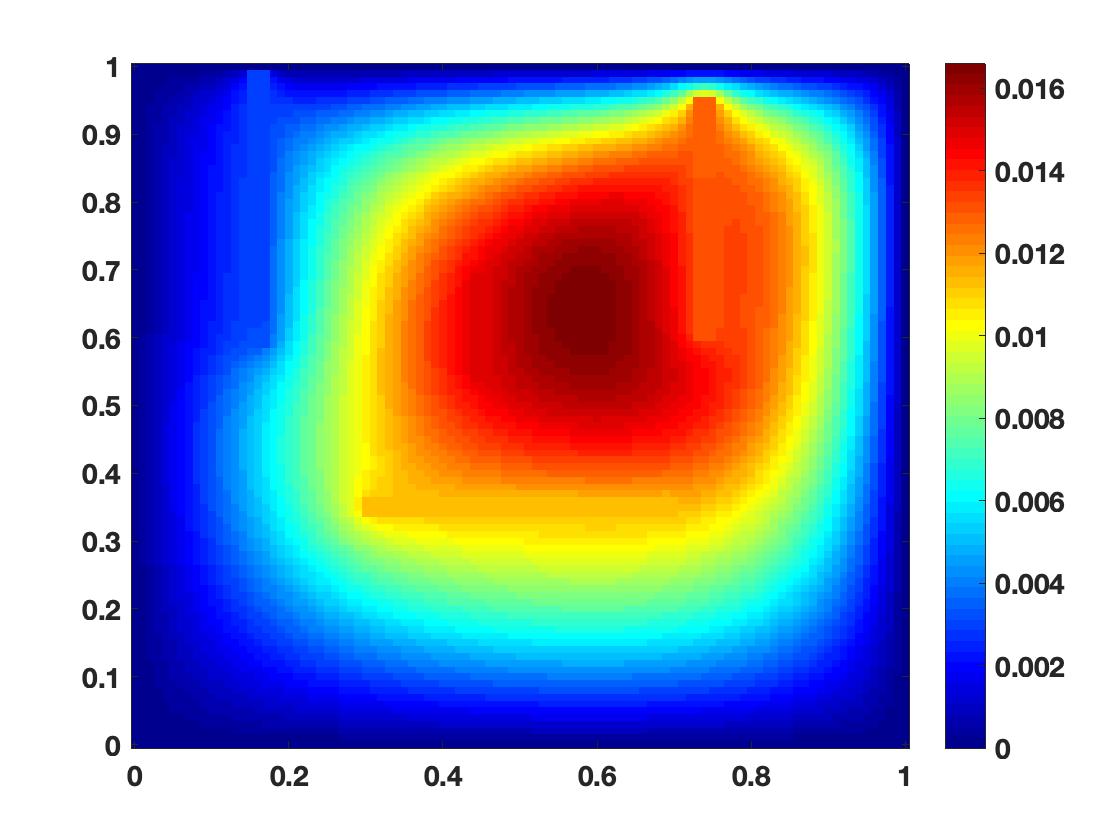}
		\caption{Snapshot of the reference solutions  $\tilde{U}_{h,\delta,t}$ for  $t=0.2,0.5,0.8,1.0$.}
\end{figure}
The coarse spatial and temporal mesh size we use is $H=0.1$ and $\Delta t=0.1$. The number of spatial and temporal oversampling layers $\ell_x$ and $\ell_t$ are chosen to be $\ell=\ell_x=\ell_t \in \{1,2,\cdots,5\}$. We present the snapshot of numerical solutions $\tilde{U}^\ell_{\text{ms},t}$ for $t= 0.2,0.5,0.8,1.0$ with the oversampling layer $\ell=1,2,3$ in Figure \ref{Exp2:multiscale_sol}.
\begin{figure}[H]
		\centering
		\includegraphics[trim={2.9cm 1.6cm 1.0cm 0.6cm},clip,width=0.24 \textwidth]{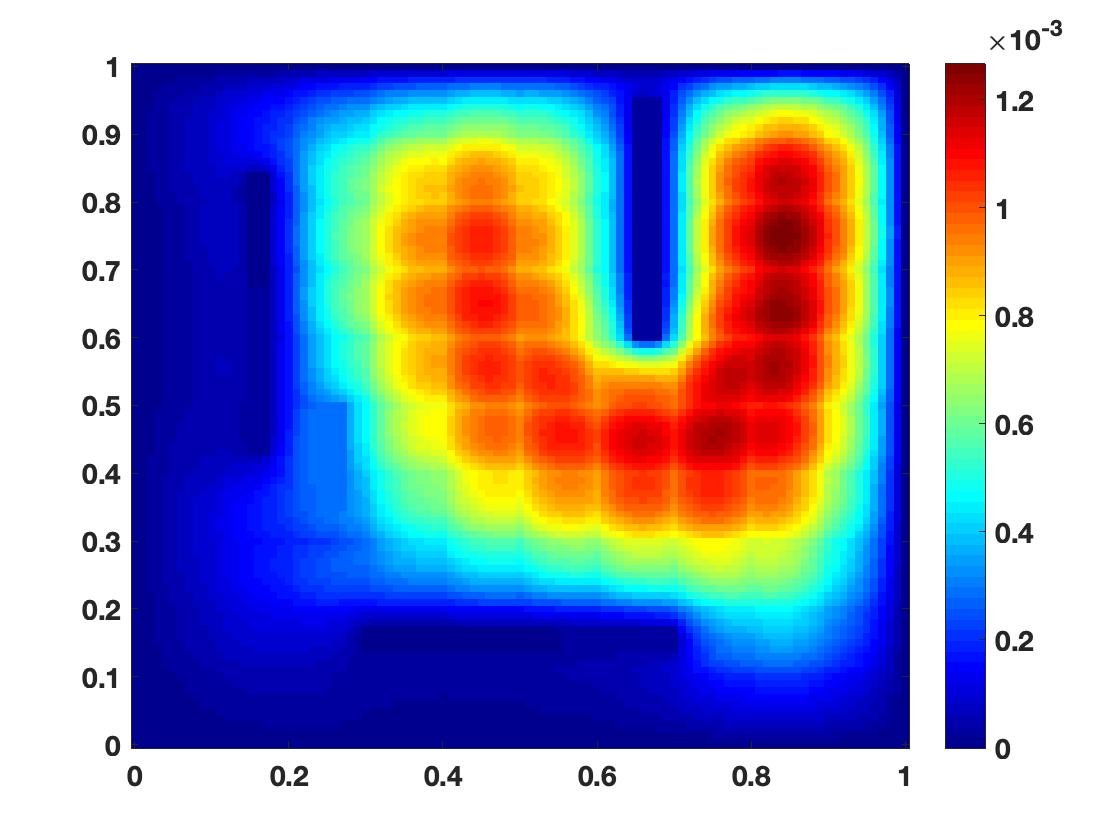}
		\includegraphics[trim={2.9cm 1.6cm 1.0cm 0.6cm},clip,width=0.24 \textwidth]{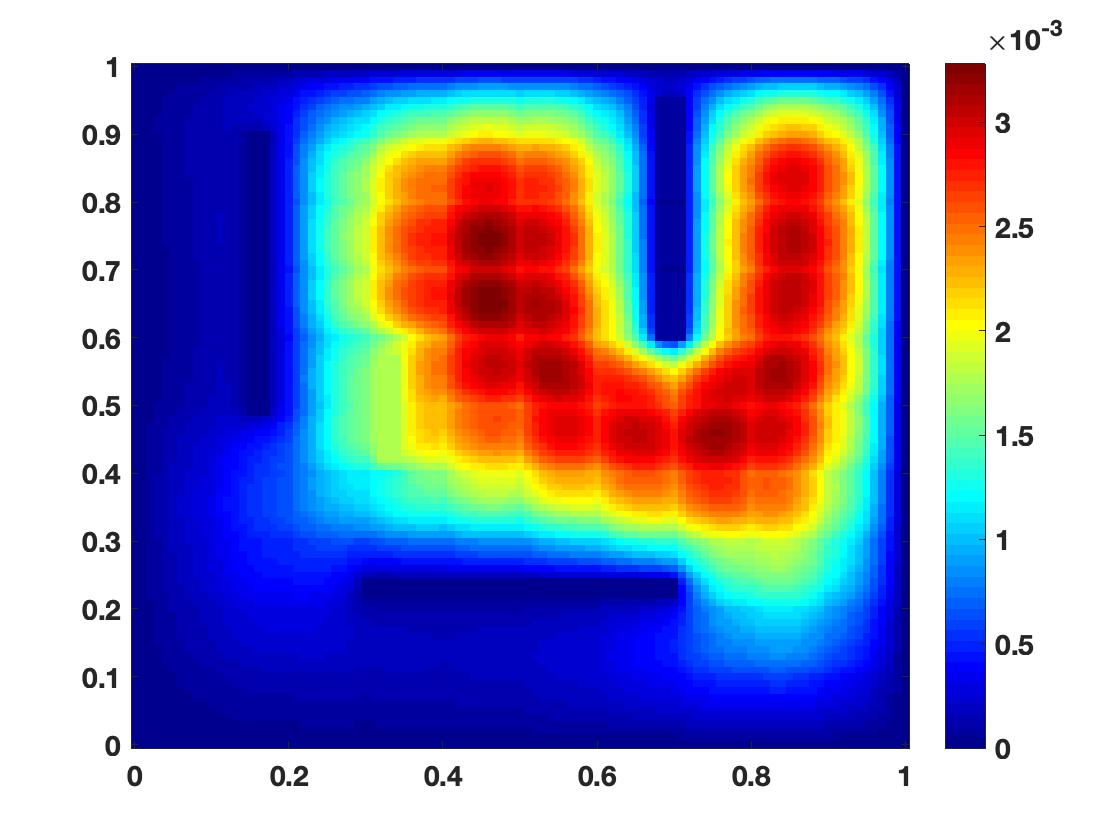}
		\includegraphics[trim={2.9cm 1.6cm 1.0cm 0.6cm},clip,width=0.24 \textwidth]{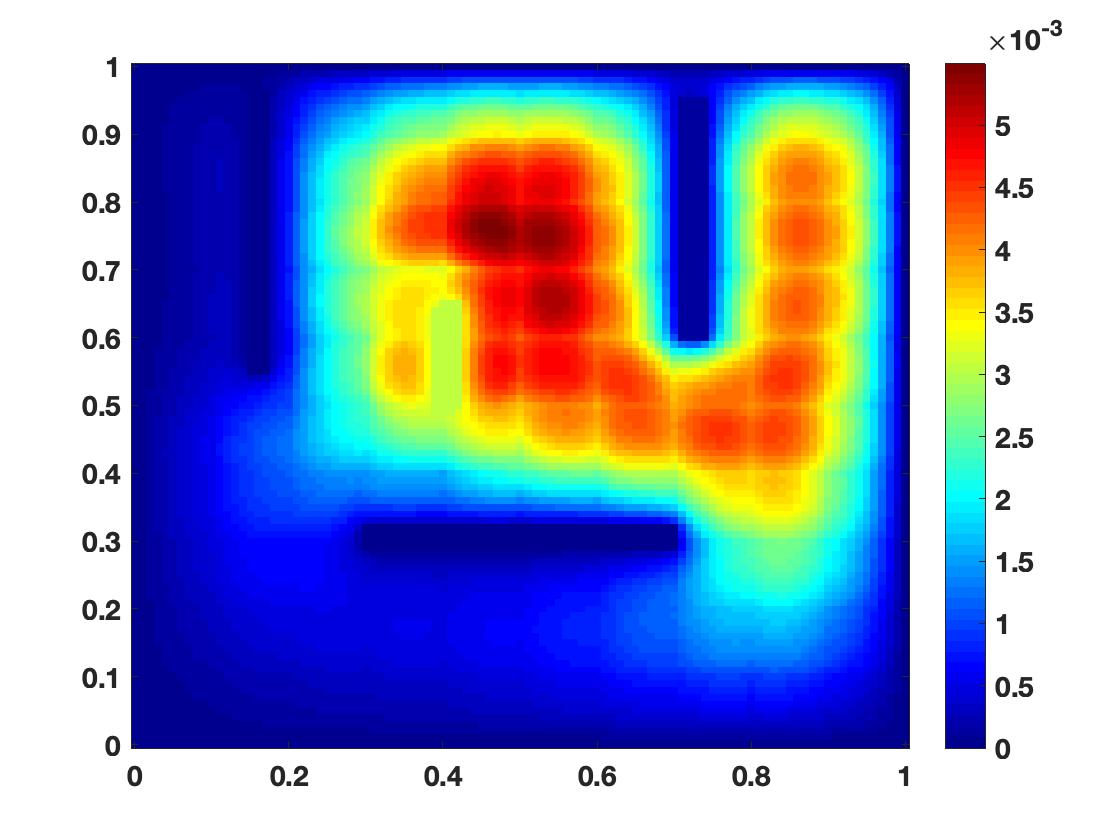}
		\includegraphics[trim={2.9cm 1.6cm 1.0cm 0.6cm},clip,width=0.24 \textwidth]{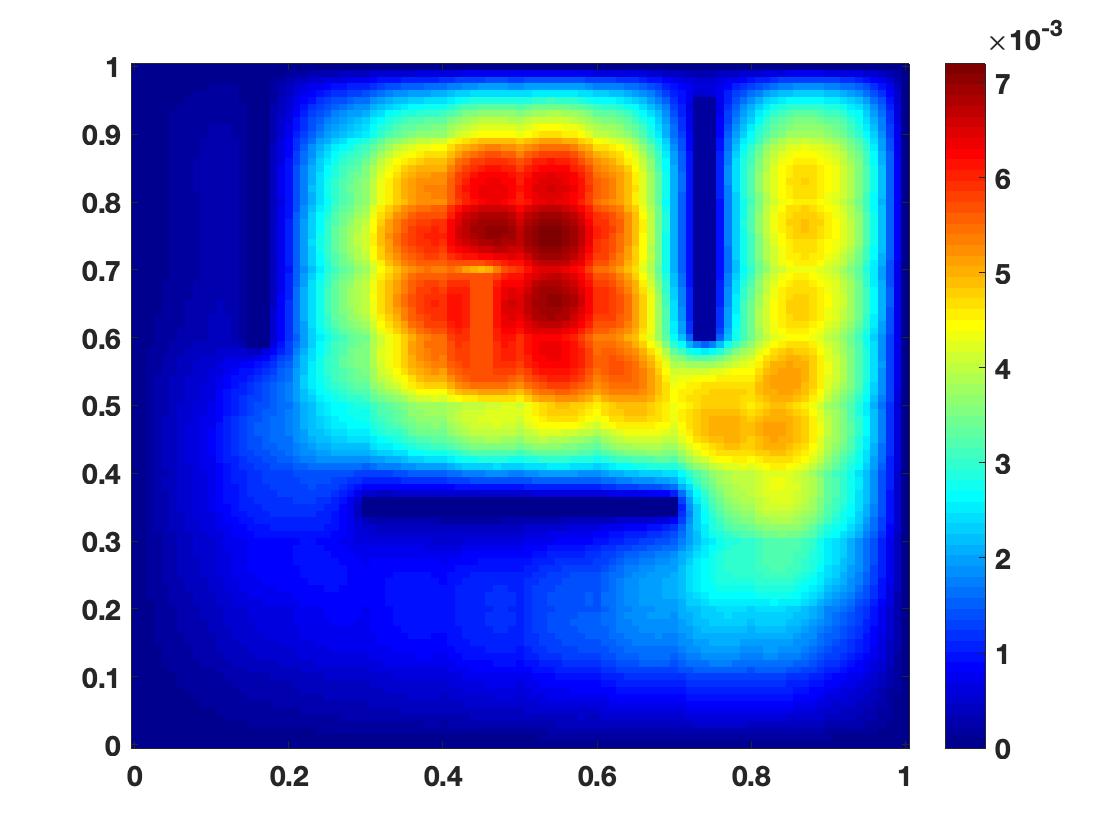}\\
		\includegraphics[trim={2.9cm 1.6cm 1.0cm 0.6cm},clip,width=0.24 \textwidth]{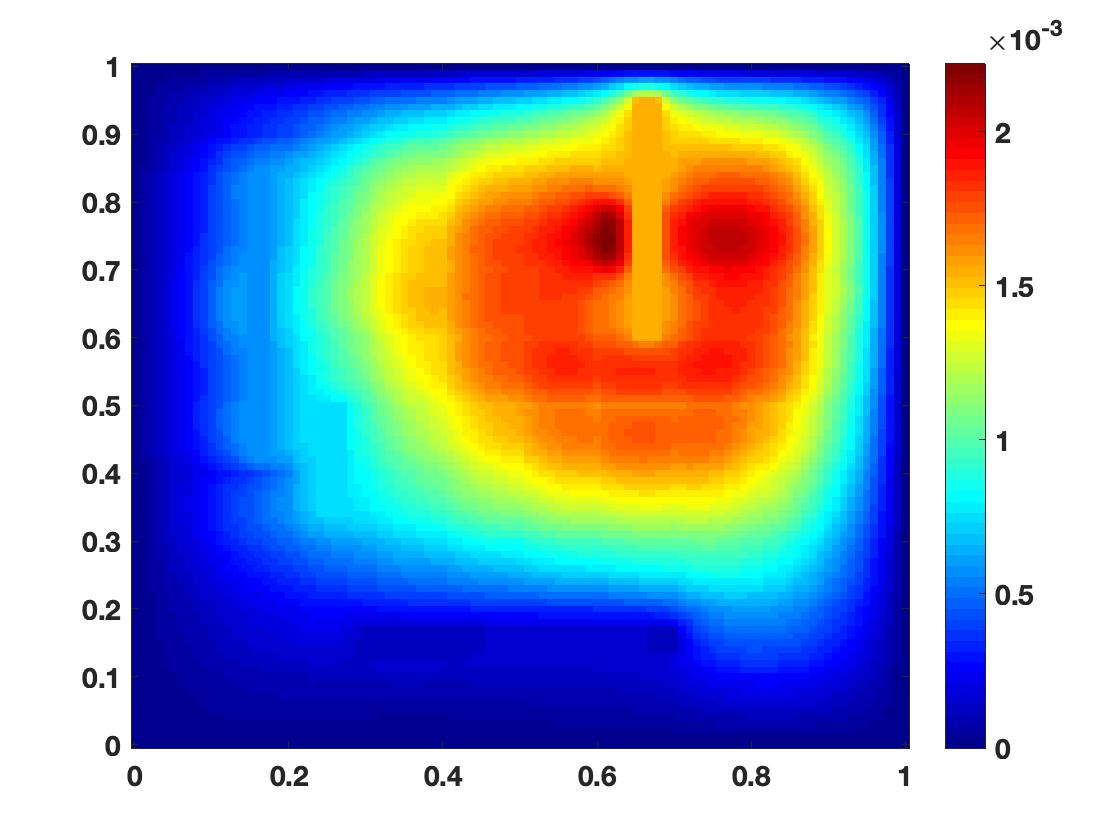}
		\includegraphics[trim={2.9cm 1.6cm 1.0cm 0.6cm},clip,width=0.24 \textwidth]{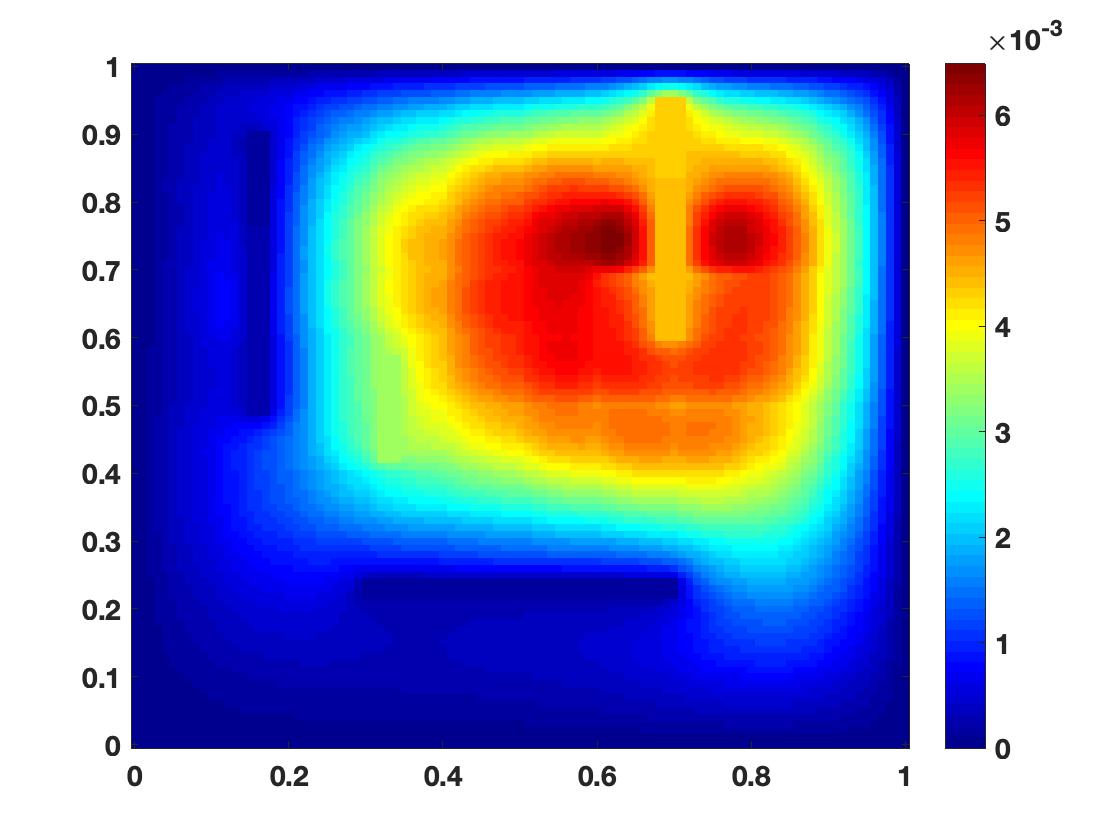}
		\includegraphics[trim={2.9cm 1.6cm 1.0cm 0.6cm},clip,width=0.24 \textwidth]{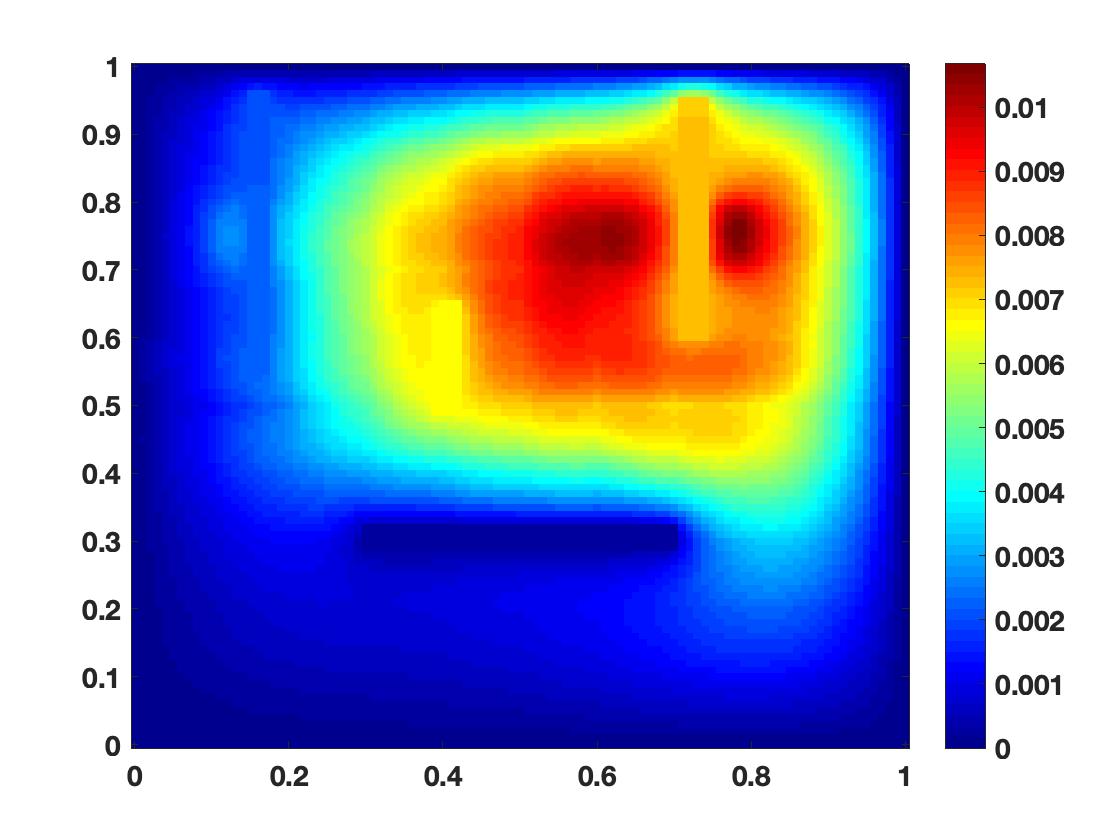}
		\includegraphics[trim={2.9cm 1.6cm 1.0cm 0.6cm},clip,width=0.24 \textwidth]{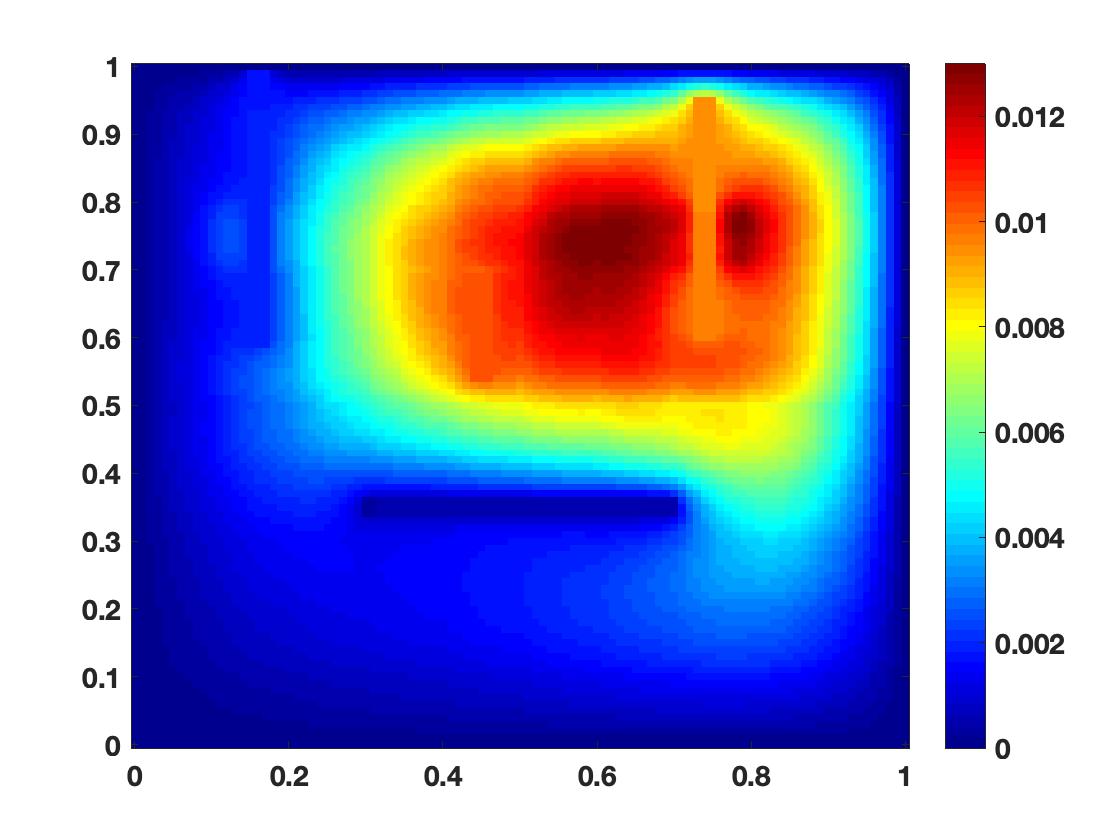}\\
		\includegraphics[trim={2.9cm 1.6cm 1.0cm 0.6cm},clip,width=0.24 \textwidth]{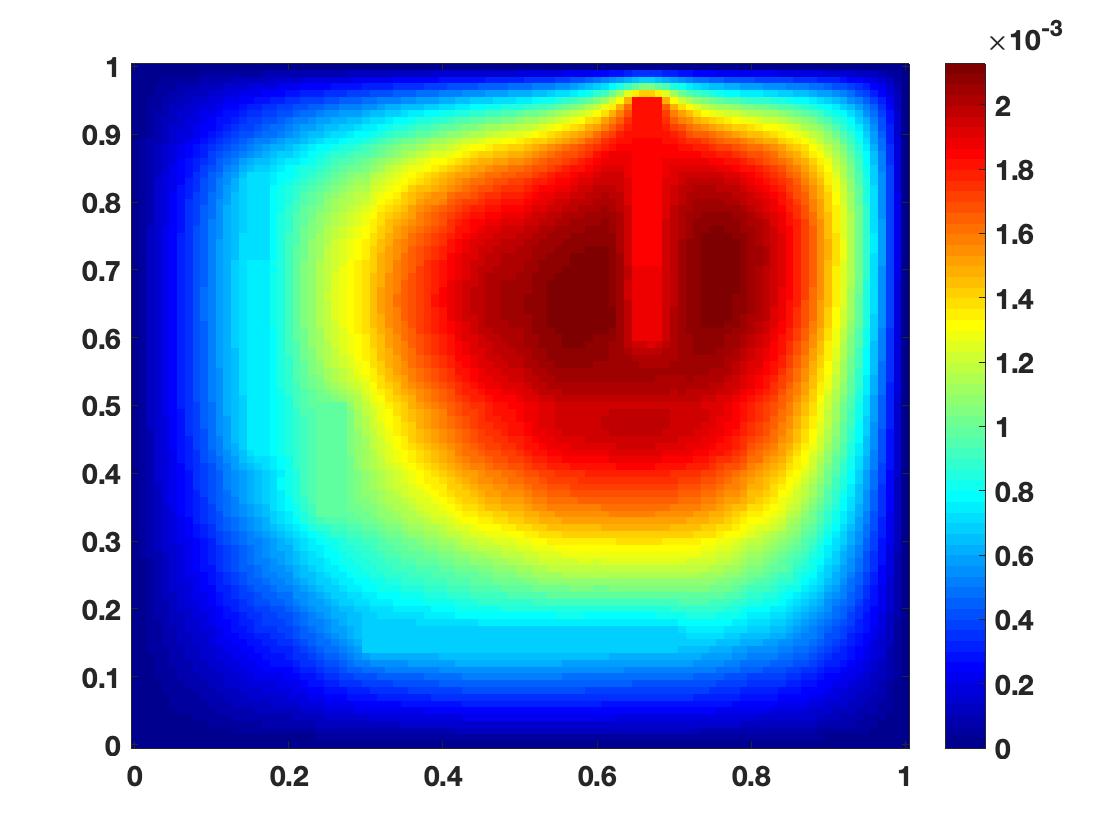}
		\includegraphics[trim={2.9cm 1.6cm 1.0cm 0.6cm},clip,width=0.24 \textwidth]{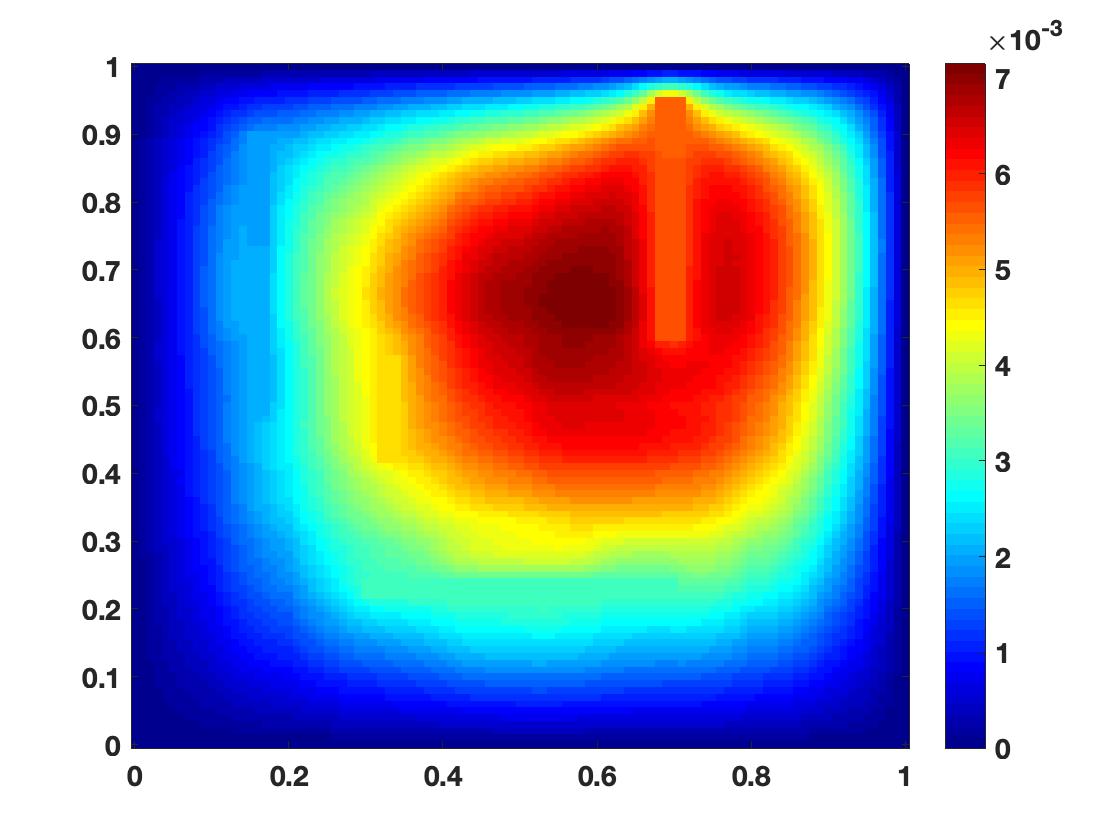}
		\includegraphics[trim={2.9cm 1.6cm 1.0cm 0.6cm},clip,width=0.24 \textwidth]{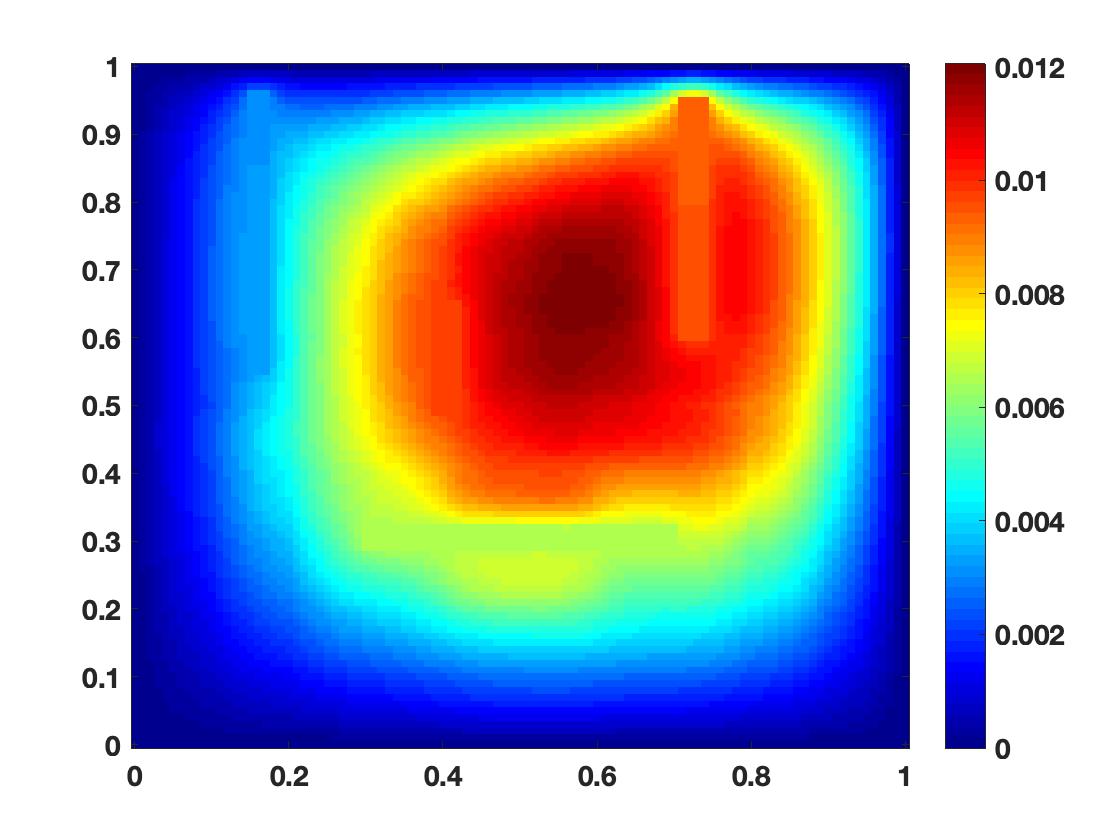}
		\includegraphics[trim={2.9cm 1.6cm 1.0cm 0.6cm},clip,width=0.24 \textwidth]{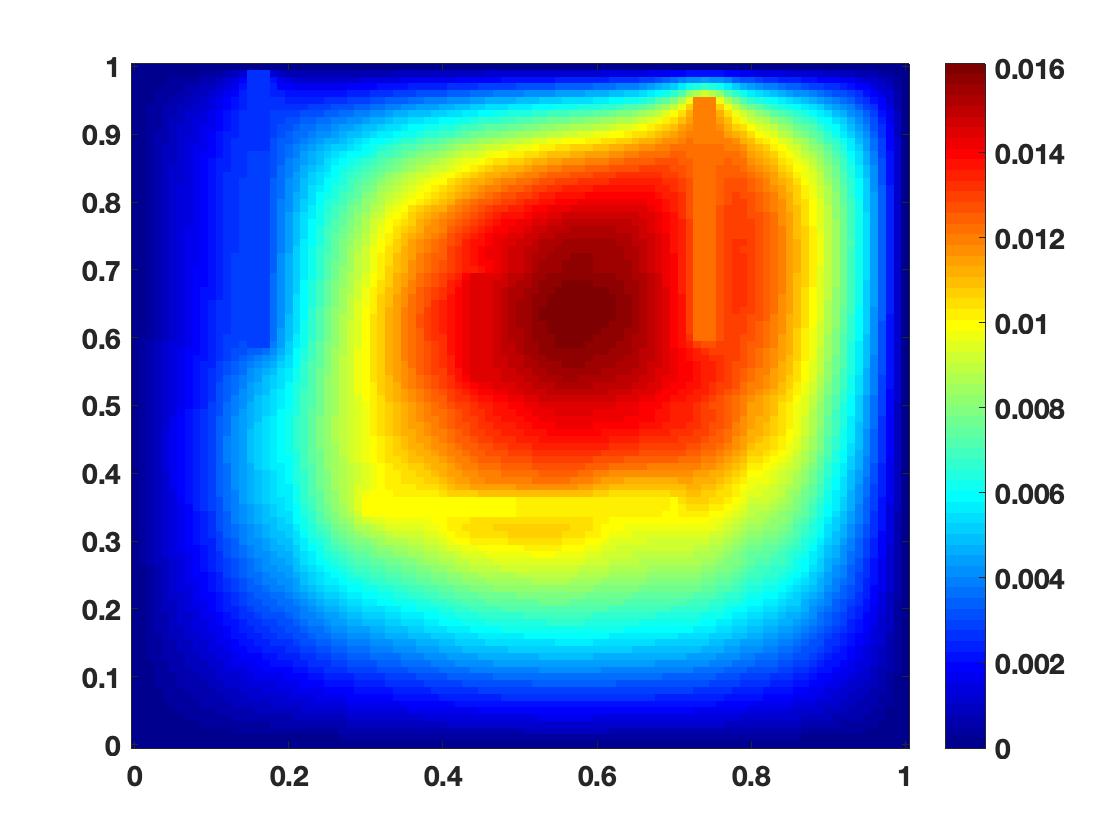}
		\caption{Snapshot of the multiscale solutions $\tilde{U}^\ell_{\text{ms},t}$ at $t=0.2,0.5,0.8,1.0$ with oversampling layer $\ell=1$ (top), $\ell=2$ (middle), $\ell=3$ (bottom). }
		\label{Exp2:multiscale_sol}
\end{figure}

The convergence history in relative $L^2$-norm and relative $H_{\kappa}^1$-norm  with oversampling layers number $\ell=1,2,\cdots,5$ are presented in Table \ref{Table: Exp2_RelativeError}.
\begin{table}[H]
	\begin{center}
	\begin{tabular}{|c|c|c|c|c|c|c|}
	\hline
	$\ell$ & $\text{Rel}_{H_{\kappa}^1}^\ell $&$\text{Rel}_{L^2}^\ell$
	\\ \hline
	 1 & 80.2825& 68.3637\\
 2 &  51.5355& 22.0861\\
 3 &  17.1313& 5.1881\\
 4 &  0.5724 & 0.0658\\
 5& 0.1876 &0.04265 \\
	\hline
	\end{tabular}
	\end{center}
	\vspace{-.4cm}
	\caption{Convergence history of  Experiment 2.}
	\label{Table: Exp2_RelativeError}
	\end{table}

\section{Conclusions}\label{sec:conclusion_SP}
In this paper, we propose an efficient numerical solver for parabolic  problems with moving channelized media.
This approach identifies channels inside each space-time coarse block and defines a piece-wise constant functions as auxiliary functions. The multiscale basis functions are constructed by solving  local problems in the oversampled regions subject to constraints, which guarantee the local multiscale basis functions decay exponentially outside the oversampled regions. We present convergence analysis for the proposed space-time NLMC method. Two numerical experiments are conducted, which show that the proposed approach can provide a good accuracy.


	\bibliographystyle{abbrv}
	\bibliography{reference}
\end{document}